\documentclass[regno,11pt]{amsart}

\usepackage[english]{babel}
\usepackage{amssymb,verbatim,microtype,graphicx,hyperref}
\usepackage[T1,T5]{fontenc}
\usepackage{mathtools}
\mathtoolsset{showonlyrefs}

 \usepackage{xcolor}

\newtheorem{theorem}{Theorem}[section]
\newtheorem{proposition}[theorem]{Proposition}

\newtheorem{corollary}[theorem]{Corollary}

\newtheorem*{thmA}{Theorem A}
\newtheorem*{thmB}{Theorem B}
\newtheorem*{thmC}{Theorem C}
\newtheorem*{thmD}{Theorem D}

\newtheorem*{corollary*}{Corollary}
\theoremstyle{definition}

\newtheorem{example}[theorem]{Example}
\newtheorem{remark}[theorem]{Remark}

\newtheorem*{ackn}{Acknowledgements}

\numberwithin{equation}{section}
 \newcommand{\beq}{\begin{equation}}
\newcommand{\eeq}{\end{equation}}

\newcommand{\Cn}{{\mathbb C}^n}
\newcommand{\C}{\mbox{$\mathbb{C}$}}

\newcommand{\R}{\mbox{$\mathbb{R}$}}
\newcommand{\D}{\mbox{$\mathbb{D}$}}
\newcommand{\B}{\mbox{$\mathbb{B}$}}
\newcommand{\E}{\mbox{$\mathcal{E}$}}
\newcommand{\J}{\mbox{$\mathcal{J}$}}
\newcommand{\cL}{\mbox{$\mathcal{L}$}}

\newcommand{\JP}{\mbox{$\mathcal{JP}$}}

\newcommand{\supp}{\mbox{supp}}
\newcommand{\SH}[1]{\mbox{$\mathcal{SH}(#1)$}}
\newcommand{\SHc}[1]{\mbox{$\mathcal{SH}^c(#1)$}}
\newcommand{\PSH}[1]{\mbox{$\mathcal{PSH}(#1)$}}

\newcommand{\PSHc}[1]{\mbox{$\mathcal{PSH}^c(#1)$}}

\newcommand{\cA}{\mathcal{A}}
\newcommand{\wz}{\widehat z}
\newcommand{\wu}{\widehat u}
\newcommand{\wv}{\widehat v}

\begin{document}

\title{Regularity of rooftop envelopes}

\author{Eleonora Di Nezza}\address{Dipartimento di Matematica, Università degli Studi di Roma Tor Vergata, Via della Ricerca Scientifica 1, 00133 Roma, Italy}\email{dinezza@mat.uniroma2.it}

\author
{Alexander Rashkovskii}\address{Department of Mathematics and Physics, University of Stavanger, 4036 Stavanger, Norway}\email{alexander.rashkovskii@uis.no}

\date{}

\keywords{Rooftop envelope, plurisubharmonic geodesic, Monge-Amp\`ere equation, Jensen measure}
\subjclass[2020]{Primary 32U05, 32W20; Secondary 32F17, 32T15, 32U10, 53C22}

\begin{abstract}
We study continuity, H\"older regularity, and $C^{1,1}$-regularity of geodesics between continuous plurisubharmonic functions on bounded domains of $\Cn$. We then derive regularity properties of rooftop envelopes.
\end{abstract}

\maketitle

\section{Introduction}

The subject of geodesics in the space of K\"{a}hler potentials on a compact K\"ahler manifold $(X, \omega)$ originates from the pioneering work by Mabuchi \cite{Mab} and had since become a central topic in K\"{a}hler geometry. It is well known that such geodesics can be characterized as solutions to a homogeneous complex Monge-Amp\`ere equation \cite{Sem}.
In recent years, substantial progress has been achieved through the contributions of Guedj, Zeriahi, Berndtsson, Darvas, Di Nezza, Lu, and others. 

A particularly effective approach to connecting quasi-plurisubharmonic functions by geodesics is based on the use of  \emph{rooftop envelopes}. In this framework the geodesic joining two bounded $\omega$-plurisubharmonic functions $\varphi_0, \varphi_1$ is $\varphi_t$, $t\in (0,1)$ which can be described as an envelope on $M=X\times \cA$, $\cA=\{0<\log|\zeta|<1\}\subset\C$,
$$\varphi_{\log|z|} (x):= \Phi(x,z)=\sup \{U\in \PSH{M, \tilde\omega}:\; U|_{\partial M} \leq \varphi_0, \varphi_1\},$$
where $\tilde\omega$ is the pullback of $\omega$ to $M$.
This viewpoint has proved to be very fruitful and has been further developed in several recent works (\cite{RWN}, \cite{Da14a}, \cite{DNL18}--\cite{DNL21}). An overview of the subject can be found in \cite{DNL23}.
\smallskip

Similar questions in the local setting, namely for plurisubharmonic functions defined on bounded domains in $\Cn$, have been studied in \cite{A}, \cite{AD}, \cite{BB},\cite{R16}--\cite{R22} and other works, and this remains an active area of research. We refer to \cite{R23} for a recent survey. 

In this context, the geodesic between two plurisubharmonic ({\it psh}) functions $u_0$ and $u_1$ arises as the upper envelope of plurisubharmonic functions on a product domain and can be described as the supremum of all subgeodesics. More precisely, denote $\cA_j=\{\log|\zeta|=j\}$, $j=0,1$ and consider the class
$W(u_0,u_1)$ of functions $v\in\PSH{\Omega\times\cA}$ such that $ \limsup v(\cdot,\zeta)\le u_j(\cdot)$ as $\zeta\to \cA_j$, $j=0,1$. 
The upper envelope is defined as
\begin{equation*}
\widehat u(z,\zeta)=\sup\{v(z,\zeta):\: v\in W(u_0,u_1)\}
\in \mathcal{PSH}(\Omega\times\cA).
\end{equation*}

We say that a family $v_t(z)$, $0<t<1$, is a {\it subgeodesic for $u_0,u_1$} if $v_{\log|\zeta|}(z)\in W(u_0,u_1)$, and the largest subgeodesic, $u_t$, is by definition the {\it geodesic} joining $u_0$ and $u_1$.
In other words, 
\[u_t(z)=u_{\log|\zeta|}(z):=\widehat u(z,\zeta).\] 

\smallskip

In this paper, we focus on the regularity properties of geodesics in the local setting. More generally, we study \emph{psh envelopes} of the form
$$P(h)=\sup \{v\in\PSH\Omega:\: v\le h \}^*,$$ 
where $h$ is a measurable function locally bounded from above and $^*$ denotes the u.s.c. regularization, and {\it rooftop envelopes} $P(u,v)=P(\min\{u,v\})$.

It is worth emphasizing that even some basic questions concerning the regularity of these envelopes remain open. The main difficulties  stem from boundary conditions and lack of control of the total Monge-Amp\`ere mass.


\smallskip

Continuity of $P(h)$ for $h\in C(\overline\Omega)$ in B-regular domains is a classical result due to Walsh \cite{W}. For a larger class of domains, this can be done by using Jensen measures and Edwards' theorem; we refer among other to 
\cite{Si}, \cite{W01}.



\smallskip

We extend Edwards' theorem to the setting of $\omega$-psh functions on compact K\"ahler manifolds $(X,\omega)$, by arriving to the notion of {\it Jensen pairs}, see Theorem~\ref{thm:Edwards}. This allows us to provide a new and more elementary proof of the continuity of envelopes $P_\omega(h)$ in this setting (Theorem~\ref{thm:omega_cont}).

Then we concentrate on
the regularity properties of geodesics in the local case. 
In contrast to \cite{A}, which mainly deals with the case of zero boundary values, we consider arbitrary continuous boundary data and provide a complete description of the possible boundary behavior using Jensen measure techniques:

\begin{thmA}
 Let $\Omega$ be a bounded hyperconvex domain and $u_0,u_1\in\PSH\Omega\cap L^\infty(\Omega)$. Then 
 \begin{itemize}
     \item[(i)] $\widehat u = P_{\Omega\times\cA}(\Psi_u)$, where $\Psi_u(z,\zeta)=(1-\log|\zeta|)\,u_0(z)+\log|\zeta|\,u_1(z)$;
     \smallskip
     \smallskip
      \item[(ii)] if $u_0,u_1\in \PSH{\Omega}\cap C(\overline\Omega)$ and $\Omega$ is B-regular, then $\widehat u \in C(\overline{\Omega\times \cA})$ and $\widehat u=\Psi_u$ on $\partial \Omega \times \cA$.
 \end{itemize}
\end{thmA}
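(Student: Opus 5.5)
For (i) I will prove the two inequalities separately. For $\widehat u\le P_{\Omega\times\cA}(\Psi_u)$, take $v\in W(u_0,u_1)$ and fix $z\in\Omega$. The function $\zeta\mapsto v(z,\zeta)$ is subharmonic on $\cA$ and depends only on $|\zeta|$ after rotation averaging. More precisely, $\max_{|\zeta|=e^t} v(z,\zeta)$ is convex in $t\in(0,1)$, and by the boundary conditions its $\limsup$ at $t=0,1$ is at most $u_0(z),u_1(z)$. Convexity then gives $v(z,\zeta)\le(1-t)u_0(z)+t\,u_1(z)=\Psi_u(z,\zeta)$ with $t=\log|\zeta|$. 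Hence every $v\in W$ is a candidate for $P(\Psi_u)$, and taking the supremum and usc regularization gives $\widehat u\le P(\Psi_u)$.

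For the reverse inequality, let $w\in\PSH{\Omega\times\cA}$ with $w\le\Psi_u$. Then $\limsup_{\zeta\to\cA_j}w(z,\zeta)\le\limsup\Psi_u(z,\zeta)=u_j(z)$, because $u_0,u_1$ are bounded and the coefficients $1-\log|\zeta|$, $\log|\zeta|$ tend to $1,0$ or $0,1$. So $w\in W(u_0,u_1)$ and $w\le\widehat u$. Since $\widehat u$ is psh and usc, taking the regularized supremum gives $P(\Psi_u)\le\widehat u$. This part is elementary; the only subtlety is that $\widehat u$ is already usc, and that follows from $\widehat u^*\in W$, which one checks by the same convexity bound.

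For (ii), hyperconvexity and continuity let me build explicit barriers. Let $h_j$ be the maximal psh functions on $\Omega$ with continuous boundary values $u_j|_{\partial\Omega}$; B-regularity gives $h_j\in C(\overline\Omega)$ by Walsh's theorem. Choose a continuous exhaustion $\rho\in\PSH\Omega\cap C(\overline\Omega)$ with $\rho=0$ on $\partial\Omega$, strictly psh (available in B-regular domains). For the lower barrier, use
\[
v_\epsilon(z,\zeta)=\max\{u_0(z)+A\log|\zeta|\,,\,u_1(z)-A(1-\log|\zeta|)\}-\epsilon \ \text{ glued with }\ \Psi_{h}+C\rho,
\]
where $A\ge\|u_0-u_1\|_\infty$, or more simply $\max\{\Psi_u\text{-type subgeodesic},\ (1-t)u_0+tu_1$ corrected by $C\rho\}$. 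The key observation is that $(z,\zeta)\mapsto(1-\log|\zeta|)u_0(z)+\log|\zeta|u_1(z)$ fails to be psh only through mixed terms. After uniform approximation of $u_j$ by smooth psh functions near $\overline\Omega$ (possible in B-regular domains), I will add $C(|\log|\zeta||^2-\log|\zeta|)+C\rho$ to absorb those terms. This gives psh subgeodesics $v_\epsilon\in W$ that are continuous on the closure and satisfy $v_\epsilon\ge\Psi_u-\epsilon$ on $\partial\Omega\times\cA$ and on $\Omega\times\partial\cA$. An upper barrier is $\Psi_u$ itself on $\partial\Omega\times\cA$, via (i), together with $\max(u_0,u_1)$-type maximal majorants. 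These barriers force $\widehat u=\Psi_u$ on $\partial\Omega\times\cA$ and give continuity at the boundary of the product.

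Interior continuity then follows by the standard Walsh argument. For small translations in $z$ and rotations/dilations in $\zeta$, the translate of $\widehat u$ minus the boundary oscillation modulus lies in $W$, so $\widehat u$ is continuous. Alternatively I could invoke the Jensen measure/Edwards characterization, using that $\Omega\times\cA$ is B-regular and $\Psi_u$ is continuous on the closure. The main obstacle I expect is the lower barrier: making $\Psi_u$ psh up to a small error with uniform control up to $\partial\Omega$. This needs the uniform smooth approximation of continuous psh functions on B-regular domains, and that is where I would spend the effort, possibly replacing it by a Jensen-measure estimate showing that every Jensen measure for $\Omega\times\cA$ at a point of $\partial\Omega\times\cA$ is trivial.
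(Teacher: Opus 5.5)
Your part (i) is correct and is essentially the paper's argument: $\widehat u\le\Psi_u$ by convexity in $\log|\zeta|$, and every psh $w\le\Psi_u$ lies in $W(u_0,u_1)$ because $\Psi_u$ is u.s.c.\ and tends to $u_j$ at $\cA_j$.

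Part (ii) has a genuine gap. Your lower barrier rests on uniformly approximating $u_j$ on $\overline\Omega$ by smooth psh functions defined near $\overline\Omega$. This fails for general B-regular domains, already for $n=1$. Take the slit disk $\Omega=\mathbb{D}\setminus[0,1)$, which is regular and hence B-regular. Let $u$ be harmonic in $\Omega$ and continuous on $\overline\Omega=\overline{\mathbb{D}}$, with boundary values $-1$ on $\partial\mathbb{D}$, $0$ on $[0,\frac12]$, and $1-2x$ on $[\frac12,1]$. Then $u<0$ in $\Omega$ while $u(\frac14)=0$, so the mean of $u$ over $|z-\frac14|=\frac18$ is negative. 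Hence no function subharmonic near $\overline{\mathbb{D}}$ can be uniformly close to $u$. Without such approximants you have no uniform bound on $|\nabla_z(u_1-u_0)|$ up to $\partial\Omega$, and that bound is exactly what absorbing the mixed terms $\partial_{z_j}(u_1-u_0)\,\partial_{\bar\zeta}\log|\zeta|$ needs.

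Even granting the approximation, a single constant $C$ in front of both $\log^2|\zeta|-\log|\zeta|$ and $\rho$ does not work. On $\partial\Omega\times\cA$ the correction drops to $-C/4$, whereas positivity of the Hessian forces $C^2\gtrsim\sup|\nabla_z(u^\epsilon_1-u^\epsilon_0)|^2$. You would need a small coefficient $\delta$ on the first term and a coefficient of order $\delta^{-1}\sup|\nabla_z(u^\epsilon_1-u^\epsilon_0)|^2$ on $\rho$.

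Your fallbacks rest on false claims. $\Omega\times\cA$ is never B-regular. Jensen measures at $(p,\zeta)\in\partial\Omega\times\cA$ are not trivial: they are supported on $\{p\}\times\overline\cA$ and project onto all of $\J^c_\zeta(\cA)$. That is precisely why the criterion in Proposition~\ref{prop:bv} is subharmonicity of $\phi(p,\cdot)$.

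The missing idea is to freeze $z=p$ in $\Psi_u$. For $p\in\partial\Omega$, take a strong barrier $\rho_p\in\PSHc\Omega$ with $\rho_p(p)=0$ and $\rho_p<0$ on $\overline\Omega\setminus\{p\}$, and set $v(z,\zeta)=\Psi_u(p,\zeta)-\epsilon+N\rho_p(z)$. Since $\zeta\mapsto\Psi_u(p,\zeta)$ is harmonic in $\cA$, $v$ is psh and continuous on $\overline{\Omega\times\cA}$. Uniform continuity of $u_j$ and a large $N$ give $v\le u_j$ on $\Omega\times\cA_j$, so $v\in W(u_0,u_1)$ and $v\le\widehat u$, while $v(p,\zeta)=\Psi_u(p,\zeta)-\epsilon$. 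Combine this with $\widehat u\le\Psi_u$ and with the subgeodesic $\max\{u_0-Mt,u_1-M(1-t)\}$ near $\overline\Omega\times\partial\cA$ (here $t=\log|\zeta|$, $M=\|u_0-u_1\|_\infty$). This gives continuity of $\widehat u$ up to the boundary with $\widehat u=\Psi_u$ on $\partial\Omega\times\cA$, and your Walsh translation argument then yields interior continuity.

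The paper expresses the same mechanism through Jensen measures. It uses $V_p(z,\zeta)=\rho_p(z)$ to localize Jensen measures at boundary points. It then applies Wikstr\"om's characterization to obtain $v\in\PSHc{\Omega\times\cA}$ with $v=\Psi_u$ on the whole boundary, and concludes with the sandwich $v\le\widehat u\le\Psi_u$.
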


The first item is Proposition ~\ref{prop:env_geod} and the second one is Theorem~\ref{thm:geo_bv}.


\smallskip
Furthermore, we analyze how the regularity of geodesics depends on the boundary data. We summarize our findings as follows (Theorem~\ref{thm:1/2 H}):

\begin{thmB}
 Let  $\Omega$ be strongly pseudoconvex. Then the followings hold:
 \begin{itemize}
 \item[(i)] Assume $u_0,u_1\in\PSH\Omega$ are $\alpha$-H\"older continuous on $\overline\Omega$, $0<\alpha\le 1$. Then the geodesic $u_t$ is $\frac\alpha2$-H\"older on $\overline{\Omega}$, uniformly in $t\in [0,1]$.
 
 \item[(ii)] Assume $u_0,u_1\in C^{1,1}(\overline \Omega)$ and $u_0-u_1=const$ on $\partial \Omega$. Then $u_t$ is Lipschtiz on $\overline{\Omega}$, uniformly in $t\in [0,1]$.
 \end{itemize}
\end{thmB}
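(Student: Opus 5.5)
Throughout, $\Omega=\{\rho<0\}$ with $\rho$ smooth and strictly psh near $\overline\Omega$, $|\nabla\rho|\neq 0$ on $\partial\Omega$. By Theorem A, $\widehat u=P_{\Omega\times\cA}(\Psi_u)$ is continuous on $\overline{\Omega\times\cA}$ and equals $\Psi_u$ on $\partial\Omega\times\cA$. The plan follows the classical Walsh/Bedford--Taylor scheme. First we get a boundary modulus of continuity from explicit barriers. Then we propagate it to the interior by comparing $\widehat u$ with its translates in the $z$-variable; the variable $\zeta$ is never translated.

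\emph{Step 1 (barriers for (i)).} Fix $z_0\in\partial\Omega$. We look for $v\in W(u_0,u_1)$ with $v(z,\zeta)\ge \Psi_u(z_0,\zeta)-C|z-z_0|^{\alpha/2}$, and for a psh majorant giving $\widehat u(z,\zeta)\le \Psi_u(z_0,\zeta)+C|z-z_0|^{\alpha/2}$.

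For the lower barrier, use the standard fact that on a strongly pseudoconvex domain an $\alpha$-Hölder function $\phi$ on $\partial\Omega$ has a psh extension $\ge$ the boundary data near $z_0$. Concretely, take $h_j(z)=u_j(z_0)-C|z-z_0|^\alpha + A\rho(z)$-type local peak functions, or use the $\alpha/2$-Hölder psh extension of Bedford--Taylor, where the exponent halves. Combining these linearly in $t=\log|\zeta|$ is psh since $\log|\zeta|$ is pluriharmonic. The key inequality is
\begin{equation}
u_j(z_0)-C|z-z_0|^{\alpha}+B\rho(z)\le u_j(z)\quad\text{on }\Omega,
\end{equation}
suitably modified so as to be psh, for instance with $B\rho - C|z-z_0|^\alpha$ replaced by $-C(|z-z_0|^2-B\rho)^{\alpha/2}$, which is psh for $B$ large by strict convexity. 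This yields a lower bound at rate $|z-z_0|^{\alpha/2}$, because $|\rho(z)|\lesssim |z-z_0|$.

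For the upper bound, note that $\widehat u(\cdot,\zeta)\le P$-envelope$\,\le$ the harmonic extension in $z$ of the boundary values, and $\widehat u\le \Psi_u$ gives $\widehat u\le \max(\ldots)$. More simply, $\widehat u(z,\zeta)\le \Psi_u(z,\zeta)\le \Psi_u(z_0,\zeta)+C|z-z_0|^\alpha$ directly, since $u_j$ are Hölder on $\overline\Omega$. So only the lower barrier is needed. Both bounds are uniform in $t$.

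\emph{Step 2 (interior propagation).} For small $h\in\Cn$ set $\Omega_h=\{z:z+h\in\Omega\}$ and define on $(\Omega\cap\Omega_h)\times\cA$
\begin{equation}
w(z,\zeta)=\widehat u(z+h,\zeta)-C|h|^{\alpha/2}.
\end{equation}
On $\partial(\Omega\cap\Omega_h)\times\cA$, Step 1 gives $w\le\widehat u$. On $\Omega\times\partial\cA$ we have $w\le u_j(z+h)-C|h|^{\alpha/2}\le u_j(z)$ by the Hölder continuity of $u_j$. Hence $\max(\widehat u,w)$, extended by $\widehat u$ off $\Omega\cap\Omega_h$, belongs to $W(u_0,u_1)$. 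Maximality of $\widehat u$ then gives $\widehat u(z+h,\zeta)-\widehat u(z,\zeta)\le C|h|^{\alpha/2}$. By symmetry in $h$ this proves (i).

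\emph{Step 3 (part (ii)).} Subtracting the $\zeta$-pluriharmonic term, we may assume $u_0=u_1$ on $\partial\Omega$. Then $\Psi_u=u_0$ on $\partial\Omega\times\cA$. Since $u_j\in C^{1,1}$, the functions $u_j+A\rho$ are psh and Lipschitz, so
\begin{equation}
v=(1-t)u_0+tu_1+A\rho
\end{equation}
lies in $W(u_0,u_1)$ and agrees with $\widehat u$ on $\partial\Omega\times\cA$. This gives $\widehat u\ge v$ with a Lipschitz lower bound, while $\widehat u\le\Psi_u$ gives a Lipschitz upper bound. Thus $|\widehat u(z,\zeta)-\Psi_u(z_0,\zeta)|\le C|z-z_0|$ for $z_0\in\partial\Omega$. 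Running Step 2 with exponent $1$ gives Lipschitz regularity uniformly in $t$.

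The main obstacle is the lower barrier in Step 1(i), namely producing a psh minorant of merely Hölder data with the loss of exactly half the exponent. I would adapt the Bedford--Taylor/Gamelin construction, taking the $\zeta$-affine combination of the two barriers.
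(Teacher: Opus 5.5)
Part (ii) has a genuine gap. Your lower barrier is
\[
v=(1-t)u_0+tu_1+A\rho, \quad\text{i.e.}\quad v(z,\zeta)=u_0(z)+A\rho(z)+\log|\zeta|\,f(z), \quad f=u_1-u_0 .
\]
This function is not plurisubharmonic on $\Omega\times\cA$, so it is not in $W(u_0,u_1)$. Its complex Hessian in $(z,\zeta)$ has $\zeta\bar\zeta$-entry $0$, because $\log|\zeta|$ is harmonic and $u_0,\rho,f$ do not depend on $\zeta$. The mixed entries, however, are $\frac{1}{2\bar\zeta}\,\partial f/\partial z_j$. A positive semidefinite Hermitian matrix with a zero diagonal entry has the corresponding row equal to zero. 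So $v$ is psh only if $f$ is constant, which after your normalization means $u_0\equiv u_1$. Adding $A\rho$ cannot repair this, since it does not act in the $\zeta$-direction.

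A concrete example: on the unit disc take $u_0=0$ and $u_1=\rho=|z|^2-1$. Then $v=(\log|\zeta|+A)(|z|^2-1)$, whose Hessian has determinant $-|z|^2/(4|\zeta|^2)<0$.

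The heuristic you state in Step 1 is false in general: combining barriers linearly in $t=\log|\zeta|$ is not automatically psh just because $\log|\zeta|$ is pluriharmonic. It is harmless in Step 1 only because both barriers carry the \emph{same} $z$-dependent term with the same constant. Hence $\Psi_u(z_0,\zeta)-C(|z-z_0|^2-B\rho(z))^{\alpha/2}$ has no mixed terms.

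The fix is the paper's: use a $\zeta$-independent candidate. The function $u_0-u_1$ is Lipschitz and vanishes on $\partial\Omega$, and $|\rho|\ge c\,{\rm dist}(\cdot,\partial\Omega)$ because $\nabla\rho\ne 0$ on $\partial\Omega$. So one can choose $A$ with $u_0+A\rho\le u_1$ on $\Omega$. Then:
\begin{itemize}
\item $u_0+A\rho\in W(u_0,u_1)$; the paper uses $\max\{V,u_0+A\rho\}$.
\item It is Lipschitz and equals $\Psi_u=u_0$ on $\partial\Omega\times\cA$.
\item This gives your boundary estimate $|\widehat u(z,\zeta)-\Psi_u(z_0,\zeta)|\le C|z-z_0|$, and Step 2 then runs with exponent $1$.
\end{itemize}

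Part (i) is correct and differs from the paper essentially only in the lower barrier. The paper uses $\Psi_u(z_0,\zeta)+C_1\eta_{z_0}(z)$, where $\eta_{z_0}$ is the maximal psh function on $\Omega$ with boundary values $-|z-z_0|^\alpha$. Its $\frac\alpha2$-H\"older regularity is imported from Bedford--Taylor's Theorem 6.2.

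Your explicit barrier $-C(|z-z_0|^2-B\rho)^{\alpha/2}$ is self-contained, needs no Dirichlet problem, and has constants visibly uniform in $z_0$. It is psh for $B$ large, since $|z-z_0|^2-B\rho$ is positive and plurisuperharmonic on $\Omega$ and $s\mapsto s^{\alpha/2}$ is concave increasing. It lies below $u_j(z)-u_j(z_0)$ because $|z-z_0|^2-B\rho\ge |z-z_0|^2$.

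Ending Step 2 by gluing $\max(\widehat u,w)$ into $W(u_0,u_1)$ and using the envelope property is also a clean substitute for the paper's appeal to maximality on the non-relatively-compact set $D_h$.

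Do discard the other candidates listed in Step 1. In particular, $u_j(z_0)-C|z-z_0|^\alpha+A\rho$ is not psh: $dd^c(-|z-z_0|^\alpha)$ is unbounded below near $z_0$, and $A\,dd^c\rho$ cannot compensate.
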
 

 In the special case of $\Omega=\B_n$, the unit ball, we prove that the geodesic is locally $C^{1,1}$-smooth in $\Omega$ and its second order derivatives are locally bounded (Theorem~\ref{thm:C11 geod}): 
\begin{thmC}\label{thm:C11 geod}
   Let $u_0,u_1\in C^{1,1}(\overline\B_n)$. Then the function $\wu$ has second-order partial derivatives in the $z$-variables almost everywhere in $\B_n\times\cA$, and the derivatives are locally bounded. More precisely, for any $K\Subset\B_n$,
   \[
   \left|\frac{\partial^2 \wu(z,\zeta)}{\partial z_j\partial \bar z_k}
   \right|\le \frac{C}{{\rm dist}\, (K,\partial\B_n)^2}, \quad z\in K,\ \zeta\in \cA
   \]
   for a positive constant $C>0$ independent of $K$, $z$ and $\zeta$.
\end{thmC}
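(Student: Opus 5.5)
We plan to follow the classical approach of Bedford--Taylor for $C^{1,1}$-regularity of envelopes on the ball: use the automorphisms of $\B_n$ to build competitors in the class $W(u_0,u_1)$, and bound second differences of $\wu$ in the $z$-variables. Concretely, for $a\in\Cn$ small we take the automorphism $T_a$ of $\B_n$ with $T_a(0)=a$ (a Möbius map), extended to act only on the $z$-variable of $\B_n\times\cA$. Since the envelope is invariant in $\zeta$-rotations and the boundary constraints in $W(u_0,u_1)$ are imposed only on $\B_n\times\cA_j$, the function $(z,\zeta)\mapsto \wu(T_a z,\zeta)$ is again psh on $\B_n\times\cA$, with boundary values $u_j\circ T_a$ on $\cA_j$. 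This is the key structural point: $\B_n$ has a transitive automorphism group and the whole problem commutes with it.

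The main step is to show that
\[
\frac12\bigl(\wu(T_a z,\zeta)+\wu(T_{-a}z,\zeta)\bigr) - C|a|^2\bigl(1-|z|^2\bigr)^{-1}\ \text{(or a suitable correction)}
\]
is, after subtracting a small psh correction $C|a|^2 \phi(z)$ with $\phi$ bounded, a member of $W(u_0,u_1)$. Here we use that $u_j\in C^{1,1}(\overline\B_n)$: the Taylor expansion of $a\mapsto u_j(T_a z)$ at $a=0$ has vanishing first-order term in the symmetrized average, so $\frac12(u_j(T_az)+u_j(T_{-a}z))\le u_j(z)+C|a|^2$ with $C$ depending on $\|u_j\|_{C^{1,1}}$ and on the second derivatives of $a\mapsto T_a z$, which are uniformly bounded for $z\in\overline\B_n$. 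Subtracting $C|a|^2$ (a constant, hence psh) we obtain an element of $W(u_0,u_1)$, so
\[
\wu(T_a z,\zeta)+\wu(T_{-a}z,\zeta)-2\wu(z,\zeta)\le 2C|a|^2 .
\]
Boundedness of $\wu$ (it lies between $\Psi$-type barriers, by Theorem A) ensures everything is finite.

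Next we translate this into a bound on ordinary second differences $\wu(z+h,\zeta)+\wu(z-h,\zeta)-2\wu(z,\zeta)$. For $z\in K$ and $|h|$ small relative to $d={\rm dist}(K,\partial\B_n)$, we pick $a=a(z,h)$ with $T_a z=z+h$; then $T_{-a}z=z-h+O(|h|^2/d)$ and $|a|\lesssim |h|/d$. The error term $O(|h|^2/d)$ must be controlled using a Lipschitz bound on $\wu$ in $z$, which we obtain from Theorem B(ii) or, more robustly, from the same automorphism argument at first order applied on compacts (gradient bounded by $C/d$). This yields second differences bounded by $C|h|^2/d^2$, so $\wu(\cdot,\zeta)+C|z|^2/d^2$ is convex along every direction in a neighborhood of $K$, and since $\wu$ is psh the second differences are also bounded below in the appropriate averaged sense. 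Semiconvexity and semiconcavity together give that $\nabla_z\wu$ is Lipschitz with constant $C/d^2$, hence by Rademacher second derivatives exist a.e. with the stated bound, in particular for $\partial^2\wu/\partial z_j\partial\bar z_k$. The main obstacle is exactly this conversion step: the automorphism bound is one-sided (upper bound on second differences), and getting a two-sided bound requires combining it with plurisubharmonicity. For the mixed complex Hessian this suffices, since $\partial\bar\partial\wu\ge0$ while the upper bound controls its trace, hence all entries. The constant is uniform in $\zeta$ because nothing in the argument depends on $\zeta$.
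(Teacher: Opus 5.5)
Your argument is correct and uses the same Bedford--Taylor automorphism scheme as the paper. You average $\wu$ over $\hat T_{\pm a}$, bound the symmetrized second difference of the data by $C|a|^2$ via $C^{1,1}$ Taylor, and convert using $|a|\lesssim |h|/(1-|z|^2)$. You then combine semiconcavity with $\Delta_z\wu\ge 0$. Two steps are done differently.

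\emph{The comparison step.} The paper checks $\frac12(\wu\circ\hat\varphi_a+\wu\circ\hat\varphi_{-a})-C_6|a|^2\le\wu$ on all of $\partial D$ and invokes maximality. That requires Theorem~\ref{thm:geo_bv} ($\wu\in C(\overline D)$ with $\wu=\Psi_u$ on $\partial\B_n\times\cA$) and a $C^{1,1}$ extension of $\Psi_u$ past $\partial\B_n$. You observe instead that $W(u_0,u_1)$ only constrains $\B_n\times\cA_j$, so the averaged function minus $C|a|^2$ is itself a competitor. This is shorter, ignores the lateral boundary entirely, and needs no extension of $u_j$, since $T_{\pm a}$ preserves $\overline\B_n$.

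\emph{The conversion step.} The paper defers the passage to Euclidean second differences to Guedj--Zeriahi ``word by word''. You make explicit that a bound on $\nabla_z\wu$ is needed to absorb the defect $T_az+T_{-a}z-2z$.

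On that point, one of your two options fails: Theorem~\ref{thm:1/2 H}(ii) requires $u_0-u_1$ to be constant on $\partial\B_n$, which is not a hypothesis here. Your other option works. Since $u_j$ is Lipschitz and $|T_az-z|\lesssim|a|$, we get $\wu\circ\hat T_a-C|a|\in W(u_0,u_1)$. Hence $\wu(T_az,\zeta)\le\wu(z,\zeta)+C|a|$, which gives $|\nabla_z\wu|\lesssim 1/d$ near $K$.

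With only this local bound, the crude estimate $|T_az+T_{-a}z-2z|=O(|a|^2)=O(|h|^2/d^2)$ would give $d^{-3}$. So your sharper claim $O(|h|^2/d)$ is essential and should be justified. It is true:
\begin{itemize}
\item the quadratic part of $T_az+T_{-a}z-2z$ is $2\langle z,a\rangle^2z-|a|^2z-\langle z,a\rangle a$;
\item writing $a=\lambda z/|z|+a'$ with $a'\perp z$, this becomes $-2\bar\lambda(\lambda-|z|^2\bar\lambda)z-|a'|^2z-\bar\lambda|z|a'$;
\item for $|h|\lesssim d^2$ one has $|\lambda|\lesssim|h|/(1-|z|^2)$ and $|a'|+|\lambda-|z|^2\bar\lambda|\lesssim|h|$, and this range also absorbs the higher-order terms.
\end{itemize}

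Finally, there is a sign slip. The one-sided bound makes $\wu(\cdot,\zeta)-\frac{C}{2d^2}|z|^2$ concave, not $\wu+C|z|^2/d^2$ convex. The lower bound then comes from $\Delta_z\wu\ge0$, as you correctly say afterwards.
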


As a further contribution, we establish a local version of the ``Darvas-Rubinstein formula", which relates geodesics and envelopes via a Legendre transform (Theorem~\ref{thm:DRrepresentation}).

\begin{thmD}
Let $\Omega$ be a bounded domain. Let $u_0,u_1\in\PSH\Omega\cap L^\infty(\Omega)$ and $u_t$ be the geodesic joining them. Then 
\begin{equation}\label{eq:DRrepresentation_intro}
u_t(z)=\sup_{C\in \R} \{P^C(z)+Ct \}, \ 0<t<1,\ z\in\Omega,
\end{equation}
where $P^C:=P(u_0,u_1-C)$. 
Conversely,
\begin{equation}\label{eq:DRenv_intro}
P^C(z)=\inf_{t\in (0,1)}\{u_t(z)-Ct \}=:u^C(z),\quad C\in\R,\ z\in\Omega.
\end{equation}
In particular, 
\begin{equation*}
P(u_0,u_1)=\inf_{t\in (0,1)} u_t.
\end{equation*}
Moreover, if $\Omega$ is B-regular and $u_0,u_1\in\PSH\Omega\cap C(\overline\Omega)$, then \eqref{eq:DRrepresentation_intro} and \eqref{eq:DRenv_intro} hold on $\partial\Omega$ as well.
\end{thmD}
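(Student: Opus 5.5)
The plan is to prove the two identities in the order: first \eqref{eq:DRenv_intro} (envelopes as infima of the geodesic), then \eqref{eq:DRrepresentation_intro} as a Legendre-duality consequence. Throughout I use that, for fixed $z$, the function $t\mapsto u_t(z)$ is convex on $(0,1)$. This holds because $\wu$ is psh in $\zeta$ and depends only on $|\zeta|$, since $W(u_0,u_1)$ is invariant under rotations of $\zeta$. I also use the endpoint behaviour $u_t\to u_j$ as $t\to j$, which follows from the comparison $\max\{u_0-Mt,\,u_1-M(1-t)\}\le u_t\le \Psi_u$ with $M=\|u_0\|_\infty+\|u_1\|_\infty$. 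The lower bound holds because the left side is itself a subgeodesic. The upper bound follows from Proposition~\ref{prop:env_geod} on hyperconvex domains; on a general bounded domain I would instead argue directly that $v_t\le (1-t)u_0+tu_1$ for every subgeodesic $v$, via convexity in $t$ and the boundary $\limsup$ conditions.

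\emph{Step 1: $P^C\le u^C$.} Put $v_t:=P^C+Ct$. This is psh on $\Omega\times\cA$ because $Ct=C\log|\zeta|$ is pluriharmonic. Moreover $v_0=P^C\le u_0$ and $v_1=P^C+C\le u_1$, so $v\in W(u_0,u_1)$. Hence $P^C+Ct\le u_t$ for all $t$, which gives $P^C\le \inf_t(u_t-Ct)=u^C$. Taking the supremum over $C$ also gives the inequality ``$\ge$'' in \eqref{eq:DRrepresentation_intro}.

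\emph{Step 2: $u^C\le P^C$.} This is the main obstacle, because an infimum of psh functions need not be psh. The key observation is that the infimum of the psh functions $\wu(z,\zeta)-C\log|\zeta|$ over the \emph{real} parameter $t=\log|\zeta|$ is psh in $z$. This is Kiselman's minimum principle: the function is psh on $\Omega\times\cA$ and independent of $\arg\zeta$, and the fibres in $t$ are the convex set $(0,1)$. So $u^C$ is psh, or at least its usc regularization is. Letting $t\to0$ and $t\to1$ and using the endpoint limits yields $u^C\le u_0$ and $u^C\le u_1-C$. Therefore $u^C$, and also $(u^C)^*$ since $u_0,u_1$ are usc, is a competitor for $P(\min\{u_0,u_1-C\})$, which gives $u^C\le P^C$. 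Combined with Step~1 this proves \eqref{eq:DRenv_intro}, and $C=0$ gives $P(u_0,u_1)=\inf_t u_t$.

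\emph{Step 3: the representation.} For fixed $z$, the map $t\mapsto u_t(z)$ is convex, finite and bounded on $(0,1)$, hence continuous there. Define $u_t(z)=+\infty$ outside $[0,1]$. Its Legendre transform is $C\mapsto\sup_t(Ct-u_t(z))=-u^C(z)=-P^C(z)$ by Step~2. Fenchel--Moreau biduality then gives, for $t\in(0,1)$,
\[
u_t(z)=\sup_C\{Ct+P^C(z)\},
\]
which proves \eqref{eq:DRrepresentation_intro}.

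\emph{Boundary statement.} If $\Omega$ is B-regular and $u_j\in C(\overline\Omega)$, then by Theorem A, $\wu$ is continuous up to the boundary and $u_t=(1-t)u_0+tu_1$ on $\partial\Omega$. By Walsh's theorem, $P^C\in C(\overline\Omega)$ with $P^C=\min\{u_0,u_1-C\}$ on $\partial\Omega$. Both identities then reduce to elementary one-variable statements for affine functions of $t$:
\[
\inf_{t\in(0,1)}\bigl((1-t)a+tb-Ct\bigr)=\min\{a,\,b-C\},
\]
and the supremum over $C$ of $\min\{a,b-C\}+Ct$ equals $(1-t)a+tb$, attained at $C=b-a$.
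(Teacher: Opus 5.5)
Your proposal is correct and follows essentially the same route as the paper. The subgeodesic $P^C+Ct$ gives $P^C\le u^C$. Kiselman's minimum principle together with the endpoint limits of $u_t$ gives $u^C\le P^C$. Legendre duality then yields the representation formula, and on $\partial\Omega$ both identities reduce to the affine computation once one knows $u_t=(1-t)u_0+tu_1$ there.

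The only small imprecision is attributing $P^C=\min\{u_0,u_1-C\}$ on $\partial\Omega$ to Walsh's continuity theorem. As the paper quotes it, Walsh's result gives only continuity of $P^C$. The boundary identity comes from the separate short barrier argument of Proposition~\ref{prop:env on boundary}: take a continuous psh extension of the boundary data, subtract $\epsilon$, and add a large multiple of an exhaustion function.
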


The above correspondence allows us to transfer regularity properties between the two settings and yields several corollaries concerning the regularity of envelopes:

\begin{corollary*}
 Let $\Omega$ be strongly pseudoconvex. We have
 \begin{itemize}
     \item[(i)] If $u,v\in\PSHc\Omega$ be $\alpha$-H\"older continuous in $\overline\Omega$, $0<\alpha\le 1$, then $P(u,v)$ is $\frac\alpha2$-H\"older continuous in $\overline\Omega$. 
\item[(ii)] If $u,v\in\PSH\Omega\cap C^{1,1}(\overline\Omega)$, $u-v=const$ on $\partial\Omega$, then $P(u,v)$ is Lipschitz in $\overline\Omega$.
\item[(iii)] If $u,v\in C^{1,1}(\overline\B_n)$, their envelope $P(u,v)$ belongs to $C^{1,\bar 1}(\B_n)$.
  \end{itemize}
\end{corollary*}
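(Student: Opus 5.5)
The idea is to take the case $C=0$ of Theorem D, namely $P(u,v)=\inf_{t\in(0,1)}u_t$, where $u_t$ is the geodesic joining $u_0=u$ and $u_1=v$, and to transfer the regularity of geodesics from Theorems B and C to the envelope. A preliminary observation: for $u,v$ continuous on $\overline\Omega$ and $\Omega$ strongly pseudoconvex (hence B-regular), the identity $P(u,v)=\inf_t u_t$ also holds on $\partial\Omega$ by the last statement of Theorem D. Moreover, Theorem A(ii) gives $u_t=(1-t)u+tv$ on $\partial\Omega$. So everything reduces to how regularity behaves under an infimum over $t$.

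For (i), Theorem B(i) says that the family $\{u_t\}_{t\in(0,1)}$ is uniformly $\frac\alpha2$-H\"older on $\overline\Omega$: $|u_t(z)-u_t(w)|\le A|z-w|^{\alpha/2}$ with $A$ independent of $t$. The infimum of a family of functions sharing a modulus of continuity has the same modulus, since $\inf_t u_t(z)\le u_t(z)\le u_t(w)+A|z-w|^{\alpha/2}$ for every $t$, and taking the infimum over $t$ gives one direction, with symmetry giving the other. So $P(u,v)$ is $\frac\alpha2$-H\"older on $\overline\Omega$. Case (ii) is identical, using the uniform Lipschitz bound of Theorem B(ii); the hypothesis $u-v=\mathrm{const}$ on $\partial\Omega$ is exactly the one required there.

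For (iii) I interpret $C^{1,\bar1}(\B_n)$ as: $P(u,v)$ has locally bounded Laplacian, equivalently its complex Hessian $\partial\bar\partial$ is locally bounded. Fix $K\Subset\B_n$. Theorem C gives $|\partial^2u_t/\partial z_j\partial\bar z_k|\le M_K$ on $K$ uniformly in $t$, so each $u_t$ satisfies $u_t-M|z|^2$ superharmonic on $K$ (indeed plurisuperharmonic up to a constant times $|z|^2$) for a uniform $M=nM_K$. An infimum of functions of the form (plurisuperharmonic $+M|z|^2$) is again of that form, being a locally bounded infimum of concave-type functions. Hence $P(u,v)-M|z|^2$ is plurisuperharmonic on $K^\circ$. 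Since $P(u,v)$ is also psh, $0\le dd^cP(u,v)\le M\,dd^c|z|^2$ in the sense of currents. The coefficients of $dd^cP(u,v)$ are therefore measures bounded by $M$ times Lebesgue measure, hence $L^\infty_{loc}$, which is the $C^{1,\bar1}$ property. If one also wants all real second derivatives bounded (true $C^{1,1}$), that does not follow immediately; the Laplacian bound does give $W^{2,p}_{loc}$ for all $p<\infty$, and hence $C^{1,\beta}$, by elliptic regularity.

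The main obstacle is step (iii). The subtle points are that Theorem C gives derivatives only almost everywhere, and that an infimum over an uncountable family must be handled. I would first upgrade the a.e. bound to the distributional statement that $M|z|^2-u_t$ is plurisubharmonic. This requires $u_t$ to be $W^{2,\infty}_{loc}$ in $z$, which is how Theorem C's a.e. derivatives should be read. Then I would reduce to a countable dense set of $t$, using the continuity of $t\mapsto u_t$ (geodesics are convex in $t$), so that the infimum is a countable infimum of plurisuperharmonic functions, which is again plurisuperharmonic because it is locally bounded and, by Theorem D, already continuous.
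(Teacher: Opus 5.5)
Your proposal is correct and follows the paper's route. You take $C=0$ in Theorem D to get $P(u,v)=\inf_{t\in(0,1)}u_t$, valid up to $\partial\Omega$ since strongly pseudoconvex domains are B-regular, and then pass the uniform-in-$t$ bounds of Theorems B and C to the infimum; you prove directly the stability facts the paper takes from Arzel\`a--Ascoli and from Darvas--Rubinstein's Proposition 4.4.

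One attribution should be corrected: the continuity of $P(u,v)$ used in (iii) comes from Theorem~\ref{thm:cutb} (or from your part (i)), not from Theorem D. Once continuity is known, the reduction to countably many $t$ is unnecessary.
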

This is proved in Corollary~\ref{cor:H env} and Corollary~\ref{thm:smooth env}. We mention that in Theorem~\ref{thm:smooth env 2} we establish a similar result as in the last item for envelopes of type $P(\psi)$ for some $\psi\in C^{1,1} (\overline{\B}_n)$. More precisely, we show that $P(\psi) \in C^{1,1} (\B_n)$. The proof of such a result uses analytic disks.

\smallskip

For completeness - and for comparison with the compact case - we recall that on a compact K\"ahler manifold $X$, it was shown in \cite{T18} that if $f\in C^{1,1}(X)$, then the envelope $P_\omega(f)$ also belongs to $C^{1,1}(X)$. The more general setting of a big cohomology class $\{\theta\}$ was later addressed in \cite{DNT}, where it was proved that if $f\in C^{1,\bar{1}}(X)$, then the envelope $P_\theta(f)$ likewise lies in $C^{1,\bar{1}}(X)$. \\
This latter result provides evidence that analogous regularity properties should hold for envelopes on bounded (strongly pseudoconvex) domains. Establishing such a result in this generality remains an open problem.

\begin{ackn}
The authors are grateful to Sławomir Dinew for sharing his ideas concerning the proof of Theorem~\ref{thm:1/2 H}. They also thank Mårten Nilsson, Nihat G\"okhan Göğüş for pointing out an issue with a non-compact version of Edwards' theorem in \cite{GogPerPol}, and Stefano Trapani and Ahmed Zeriahi for their useful comments. The first-named author is supported by the ERC grant SiGMA (No. 101125012, PI: Eleonora Di Nezza). Part of this work was carried out during the visit of the second-named author to the Institut de Mathématiques de Jussieu (Sorbonne Universit\'e) and he acknowledges the institution’s support.
\end{ackn}

\section{Preliminaries}\label{sec: pre}
Let $\Omega$ be a domain in $\Cn$. We denote by $\PSH\Omega$ the set of all plurisubharmonic (psh for short) functions in $\Omega$ and 
$\PSHc\Omega:=\PSH\Omega\cap C(\overline\Omega)$.

A domain $\Omega$ is:
\begin{enumerate}
 \item[(i)]   \emph{pseudoconvex} if there exists $\rho\in\PSH\Omega$ such that $\{z \in \Omega : \rho(z) < c \} \Subset \Omega$ for all $c \in\R$ (i.e., $\rho(z)\to+\infty$ as $z\to\partial\Omega$);
\item[(ii)]  \emph{hyperconvex} if there exists $\rho \in \PSH\Omega$ such that $\{z \in \Omega : \rho(z) < c \} \Subset \Omega$ for all $c < 0$ (i.e., $\rho(z)\to 0$ as $z\to\partial\Omega$); we note that one can always choose $\rho\in\PSH\Omega\cap C^\infty(\Omega)$ \cite{Bl99};
\item[(iii)]  
\emph{$B$-regular} if every point $p\in\partial\Omega$ has a strong psh barrier, i.e., a function $\rho_p\in\PSH\Omega$ such that $\sup_\Omega \rho_p=0$ while $\sup_{\Omega\setminus U} \rho_p<0$ for any neighborhood $U$ of $p$; this function can be chosen to belong to $\PSHc{\Omega}$ \cite{Si}. It is also proved in \cite{Si} that $\Omega$ is $B$-regular if and only if for any $f\in C(\partial\Omega)$ there exists $u\in\PSH\Omega\cap C(\overline\Omega)$ such that $u=f$ on $\partial\Omega$. 
 \item[(iv)] \emph{strongly pseudoconvex} if there exists a strictly psh function $\rho$ in a neighborhood of $\overline\Omega$ such that $\Omega=\{\rho<0\}$ and $\nabla \rho\neq 0$ on $\partial\Omega$.
\end{enumerate}
We have (iv)$\Rightarrow$(iii)$\Rightarrow$(ii)$\Rightarrow$(i). Also, any pseudoconvex domain with Lipschitz boundary is hyperconvex \cite{Dem87}. We also stress that product domains (in particular, polydisks) are not $B$-regular.

\medskip
Let now $\cA=\{0<\log|\zeta|<1\}\subset\C$ be the annulus bounded by the circles $\cA_j=\{\log|\zeta|=j\}$, $j=0,1$. Given $u_0,u_1\in\PSH{\Omega}$, we consider the class
$W(u_0,u_1)$ of functions $v\in\PSH{\Omega\times\cA}$ such that $ \limsup v(\cdot,\zeta)\le u_j(\cdot)$ as $\zeta\to \cA_j$, $j=0,1$. Note that $\Omega\times\cA\subset\C^{n+1}$ is not $B$-regular.

When both $u_0$ and $u_1$ are bounded from above, $W(u_0,u_1)\neq\emptyset$ because it contains $u_0+u_1-M$ for a constant $M$ big enough; the upper envelope 
\beq\label{eq:hat u}
\widehat u(z,\zeta)=\sup\{v(z,\zeta):\: v\in W(u_0,u_1)\}
\in \mathcal{PSH}(\Omega\times\cA)
\eeq
(no upper regularization needed) is independent of $\arg\zeta$; in particular, it is a convex function of $t:=\log|\zeta|$. 
\smallskip
Indeed, let $v\in W(u_0, u_1)$ and write $\zeta=e^{t+i\theta}$, $\theta\in [0, 2\pi)$. We define
$$\tilde{v} (x, |\zeta|):= \left(\sup_{\theta\in [0, 2\pi)} v\left(z, e^{t+i\theta}\right) \right)^*.$$
We have $\tilde{v}\in \PSH {\Omega \times \cA}$, since it is the regularized supremum of locally bounded above psh functions and by construction $\tilde{v} \geq v$. Also, since $\sup_{\theta} v\left(z, e^{t+i\theta} \right)\leq u_j (z) $, we infer that $\tilde{v}(\cdot, |\zeta|) \leq u_j(\cdot)$ as $\zeta\rightarrow  \mathcal{A}_j$. This means that in the envelope in \eqref{eq:hat u} we are allowed to consider only candidates which are independent of $\arg\zeta$.

\medskip
We say that a family $v_t(z)$, $0<t<1$, is a {\it subgeodesic for $u_0,u_1$} if $v_{\log|\zeta|}(z)\in W(u_0,u_1)$, and the largest subgeodesic, $u_t$, is by definition the {\it geodesic} joining $u_0$ and $u_1$.
In other words, 
\[u_t(z)=u_{\log|\zeta|}(z):=\widehat u(z,\zeta).\] 
Note that, since $\hat{u} \in \mathcal{PSH}(\Omega\times \mathcal{A})$, it follows that $t\rightarrow u_t$ is convex. Moreover, $\widehat u$ is a maximal psh function since it is a Perron envelope,  which in the case of bounded functions $u_0$, $u_1$ means that it satisfies the homogeneous Monge-Ampère equation 
\begin{equation}\label{eq: MA_geo}
(dd^c\widehat u)^{n+1}=0 \quad \qquad  \textit{in} \; \,\Omega\times\cA,
\end{equation}

Furthermore, if $u_0,u_1\in\PSH\Omega\cap L^\infty(\Omega)$, one can construct a subgeodesic 
\[
V_t:=\max\{u_0-M\,t,u_1-M\,(1-t)\},
\]
where $M=\|u_0-u_1\|_\infty$ \cite{Bern}, and get the bounds
\beq\label{VtM} V_t\le u_t\le (1-t)u_0+tu_1, \quad t\in (0,1),\eeq
where the second inequality is a consequence of the convexity of $u_t$ in $t$, and it holds for any $u_0,u_1$. In particular, we have 
 $u_t\to u_j$, uniformly on $\Omega$, as $t\to 0,1$.
 
 Combining this information with \eqref{eq: MA_geo} we can infer that $\widehat{u}$ is a solution of 
\begin{equation}\label{MA_geo}
\begin{cases}
(dd^c\widehat u)^{n+1}=0 \quad \qquad  \textit{in} \; \,\Omega\times\cA,\\
 \limsup_{\zeta \to \cA_j} \widehat u = u_j, \quad j=0,1.
\end{cases}
\end{equation}

Another, and very important, type of subgeodesics (that works also in the unbounded case) can be constructed in terms of the rooftop envelopes \cite{Da14a}:
\beq\label{Darvas_sg} 
v_t^C=P(u_0,u_1+C)-Ct, \quad C\in\R.
\eeq 
Here, for any $u,v\in\PSH\Omega$, the {\it rooftop envelope} $P(u,v)$ is the largest psh minorant of $\min\{u,v\}$ in $\Omega$ \cite{RWN}, 
$$P(u,v):=\sup\{ w\in \PSH\Omega \,:\, w\leq \min(u,v)\}.$$
More generally, given a measurable function $h$ which is locally bounded from above on $\Omega$, its {\it (lower) psh envelope} is defined as
$$P(h)=\sup \{v\in\PSH\Omega:\: v\le h \}^*,$$ where $^*$ denotes the u.s.c. regularization. In other worlds, $P(h)$ is the largest psh function 
which does not exceed $h$ quasi-everywhere in $\Omega$ (that is, outside a pluripolar set). Note that when $h$ is u.s.c., there is no need to take the upper regularization in the definition of the envelope.
\smallskip

Similarly, we define the {\it (lower) continuous psh envelope} 
\[
P^c(h):=\sup\{w\in\PSHc\Omega:\: w\le h\}.
\]
Note that the function $P^c(h)$ is l.s.c. as the upper envelope of continuous functions.
\smallskip

It follows immediately from the definition that the map $h\mapsto P(h)$ is superlinear, i.e., it satisfies $P(th)=tP(h)$ for all $t\ge 0$, and $P(h+g)\ge P(h)+P(g)$. The same holds for $h\mapsto P^c(h)$.

\section{Continuity of envelopes} 
In this section, we will review continuity properties of the rooftop envelopes $P(u,v)$ and, more generally, of the envelopes $P(h)$, assuming $u, v$, $h$ to be continuous in $\overline\Omega$. For a class of domains containing the B-regular ones, continuity can be established using the notion of Jensen measures and Edwards’ theorem, a technique whose application to plurisubharmonic functions was initiated in \cite{Si} and later further developed in subsequent works \cite{Pol91}, \cite{Pol93}, \cite{LS1}, \cite{W01}, \cite{Gog05}, \cite{DW05}, \cite{NW}, \cite{NW2}.
 After summarizing those results, we introduce a notion of Jensen pairs for $\omega$-psh functions on compact K\"ahler manifolds and establish an analog of Edwards' theorem for such functions, which gives an elementary proof of continuity of envelopes in this setting.


\subsection{Continuity when $h\in C(\overline\Omega)$}\label{ssec:Jensen}

Let $\Omega\subset\Cn$ be a bounded domain and $h\in C(\overline D)$. 
It was shown by Walsh that if $h$ is continuous in $\Omega$, then the set of discontinuities of $P(h)$ cannot have compact closure in $\Omega$ \cite[Theorem 2]{W}; furthermore, if $\Omega$ is B-regular, then $P(h)\in\PSHc{\Omega}$ \cite[Theorem 3]{W}. For more general classes of domains, as well as for some other applications, we adopt here an approach based on Jensen measures, following \cite{Si}, \cite{W01} (among others).

A positive measure $\mu$ on $\overline\Omega$ is called a {\it Jensen measure with barycenter at $z\in\overline\Omega$ for $\PSHc\Omega$} if 
\[u(z)\le\int_{\overline\Omega} u\,d\mu\quad \forall u\in\PSHc\Omega.\] 
The collection of all such measures is denoted by $\J_z^c$.

We mention that $\Omega$ is hyperconvex if and only if, for every $z\in\partial\Omega$, any measure $\mu\in\J_z^c$ is supported on $\partial\Omega$ \cite{CCW}, and $\Omega$ is B-regular if and only if, for every $z\in\partial\Omega$,  $\J_z^c=\{\delta_z\}$ \cite{Si}.

Similarly, $\J_z$ will be the collection of {\it Jensen measures $\mu$ with barycentre $z\in\overline\Omega$ for $\PSH\Omega$}, so that
\[
u^*(z)\le\int_{\overline\Omega} u^*d\mu\quad \forall u\in\PSH\Omega,\ \sup_\Omega u<\infty,
\] where $u^*$ is the u.s.c extension of $u$ to $\overline\Omega$.

By definition, $\J_z \subseteq \J_z^c$.
\medskip

 Since both $\PSH\Omega$ and $\PSHc\Omega$ are convex cones of u.s.c. functions and they contain all constants, it follows from Edwards' theorem \cite{Edw} (see \cite[Theorem 2.1 and Corollary 2.2]{W01} and \cite[Proposition 2.1]{DW05}) that, if $h$ is lower semicontinous on $\overline\Omega$, then 
\beq\label{Edwpsh}
P(h)(z)=E(h):=\inf\left\{\int_{\overline\Omega}h\,d\mu:\: \mu\in\J_z\right\},\ z\in\Omega,
\eeq

and
\beq\label{Edwpshc}
P^c(h)(z)=E^c(h):=\inf\left\{\int_{\overline\Omega}h\,d\mu:\: \mu\in\J_z^c\right\},\ z\in\overline\Omega.
\eeq
\smallskip
Observe that the latter equality makes sense because $P^c(h)$ is well defined up to the boundary.

When $\J_z=\J_z^c$ for all $z\in\Omega$, we can then infer that $P^c(h)=P(h)$ for any $h\in C(\overline \Omega)$. Indeed, the envelope $P(h)$ is plurisubharmonic and in particular u.s.c. in $\Omega$, while $P^c(h)$ is l.s.c.

It was shown in \cite{W01},\cite{Gog05} that that the implication $h\in C(\overline \Omega)\Rightarrow P(h)\in C(\overline \Omega) $ does not hold for all bounded (even hyperconvex) domains $\Omega$; while it does for domains with the following {\sl approximation property}: for any bounded function $u\in\PSH\Omega$, there
exists a sequence $u_j\in\PSHc\Omega$ decreasing to $u^*$ on $\overline\Omega$. The latter condition holds for all $B$-regular domains \cite[Theorem 4.1]{W01}; an example of a non-B-regular domain with that property is the unit bidisk \cite[Theorem 4.11]{W01}.
Also, the condition $\J_z=\J_z^c$ $\forall z\in\Omega$  is equivalent to a {\sl weak approximation property} \cite[Theorem~3.1]{DW05}, where the monotonicity of the sequence $u_j\in \PSHc{\Omega}$ is replaced by its uniform upper boundedness, pointwise convergence to $u$ on $\Omega$, and the condition $\limsup u_j\le u^*$ on $\partial\Omega$.




\medskip

We summarize this as
\begin{theorem}\label{thm:cutb} \cite{W01}, \cite{DW05}, \cite{Gog05} 
    Let $\Omega$ be a bounded domain with the weak approximation property (for example, B-regular) and $h\in C(\overline\Omega)$. Then $P(h)=P^c(h)\in C(\overline\Omega)$. In particular, $P(u,v)\in \PSHc\Omega$ for any $u,v\in\PSHc\Omega$.
\end{theorem}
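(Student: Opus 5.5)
The proof rests on the two Edwards-type representations \eqref{Edwpsh} and \eqref{Edwpshc}. Under the weak approximation property we have $\J_z=\J_z^c$ for every $z\in\Omega$ \cite[Theorem~3.1]{DW05}, so $E(h)=E^c(h)$ on $\Omega$. Since $h\in C(\overline\Omega)$ is in particular lower semicontinuous, this gives $P(h)=P^c(h)$ in $\Omega$. The function $P(h)$ is psh, hence upper semicontinuous, while $P^c(h)$ is a supremum of continuous functions, hence lower semicontinuous. Their common value is therefore continuous in $\Omega$.

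The next step is continuity up to the boundary. Here I would take $P^c(h)$, defined on all of $\overline\Omega$, as the continuous extension. It is l.s.c. on $\overline\Omega$, so it remains to prove upper semicontinuity at points $p\in\partial\Omega$. I would first treat the B-regular case, where $\J_p^c=\{\delta_p\}$, so that \eqref{Edwpshc} gives $P^c(h)(p)=h(p)$. Since $P^c(h)\le h$ and $h$ is continuous,
\[
\limsup_{z\to p}P^c(h)(z)\le h(p)=P^c(h)(p).
\]
This yields upper semicontinuity at $p$. Alternatively, one can use the strong barriers directly: by \cite{Si} there is $w\in\PSHc\Omega$ with $w=h$ on $\partial\Omega$, and $w-\varepsilon$ plus a large multiple of the barriers $\rho_p$ gives competitors that squeeze $P^c(h)$ to $h(p)$.

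For a general domain with the weak approximation property I expect the boundary step to be the main obstacle, because $\J_p^c$ need not be trivial. The plan is to show that the u.s.c. regularization $P(h)^*$ on $\overline\Omega$ agrees with $P^c(h)$ on $\partial\Omega$. By the weak approximation property, applied to the bounded psh function $u=P(h)$, there are $u_j\in\PSHc\Omega$ that are uniformly bounded above, converge pointwise to $u$ in $\Omega$, and satisfy $\limsup u_j\le u^*$ on $\partial\Omega$. I would then argue through Jensen measures. For $\mu\in\J_p^c$ and each $j$,
\[
u_j(p)\le\int u_j\,d\mu .
\]
Passing to the limit with Fatou's lemma (reverse form, using the uniform upper bound) shows that $u^*$ satisfies the sub-mean inequality against every $\mu\in\J_p^c$, i.e. $\J_p^c\subset\J_p$. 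Then \eqref{Edwpsh} and \eqref{Edwpshc} extend to give $u^*(p)\le E^c(h)(p)=P^c(h)(p)$, while $P^c(h)\le u^*$ is automatic. So $u^*$ is u.s.c., $P^c(h)$ is l.s.c., and they coincide on $\overline\Omega$, which proves continuity on $\overline\Omega$. The delicate point is exactly this boundary comparison of Jensen measure classes. If it fails in full generality, I would cite \cite{DW05}, \cite{Gog05} for that step and give the complete argument only in the B-regular case.

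Finally, for $u,v\in\PSHc\Omega$, take $h=\min\{u,v\}\in C(\overline\Omega)$. Then $P(u,v)=P(h)$, and the first part shows it lies in $\PSHc\Omega$.
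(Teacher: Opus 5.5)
\textbf{Interior step and B-regular case.} Your interior step is the paper's argument: Edwards' formulas \eqref{Edwpsh}--\eqref{Edwpshc}, the equality $\J_z=\J_z^c$ on $\Omega$ from \cite[Theorem~3.1]{DW05}, and u.s.c.\ versus l.s.c. Your B-regular boundary argument is also correct. There $\J_p^c=\{\delta_p\}$ gives $P^c(h)(p)=h(p)$, and $P^c(h)\le h$ with $h$ continuous gives upper semicontinuity at $p$. Your barrier variant is exactly the proof of Proposition~\ref{prop:env on boundary}.

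\textbf{The gap.} The gap is the boundary step under the weak approximation property alone. From $u_j(p)\le\int u_j\,d\mu$ and reverse Fatou you only get $\limsup_j u_j(p)\le\int u^*\,d\mu$. To conclude $u^*(p)\le\int u^*\,d\mu$ you would need $u^*(p)\le\limsup_j u_j(p)$. The weak approximation property only gives $\limsup_j u_j\le u^*$ on $\partial\Omega$, which is the opposite inequality. At interior points this causes no trouble because $u_j(z)\to u(z)$; that is exactly why \cite{DW05} gets $\J_z=\J_z^c$ only for $z\in\Omega$.

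Under the monotone approximation property, $u_j\searrow u^*$ on $\overline\Omega$, your argument does work. Then $u_j(p)\to u^*(p)$, and monotone convergence (in place of Fatou) gives $\J_p^c\subset\J_p$. Your sandwich $P^c(h)\le P(h)^*\le E^c(h)=P^c(h)$ on $\partial\Omega$ then finishes the proof. This is the route of \cite{W01}, and it covers B-regular domains and the bidisk.

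\textbf{Citation does not close it.} Falling back on \cite{DW05,Gog05} for the boundary step would not work either. The paper's own justification, like your first paragraph, uses only $\J_z=\J_z^c$ for $z\in\Omega$, so it yields continuity in $\Omega$ only. That condition does not imply continuity up to $\partial\Omega$, as the following example shows.

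In $\C$, let $\Omega=\D\setminus E$ with $E=\{0\}\cup\bigcup_k\overline{D(2^{-k},e^{-k^3})}$. By Wiener's criterion, $E$ is thin at $0$.
\begin{itemize}
\item Interior Jensen measures agree. Fix $g\in C(\overline\Omega)$ and let $G=\Omega\setminus\overline{B(0,\rho)}$, which is Dirichlet regular. Take the competitors $\max\{P_G(g)+A\log|\zeta|,-\|g\|_\infty\}$ with $\rho\ll e^{-2\|g\|_\infty/A}$, extended by $-\|g\|_\infty$ near $0$. These lie in $\PSHc\Omega$ and show $P^c(g)=P(g)$ in $\Omega$. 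Hence $\J_z=\J_z^c$ for every $z\in\Omega$.
\item Continuity fails at $0$. Take $h=-|\zeta|$. Harmonic barriers at the regular points $q_k=2^{-k}+e^{-k^3}\to0$ give $\liminf_{\zeta\to q_k}P(h)(\zeta)\ge h(q_k)\to0$. Along a fine approach to $0$, however, $P(h)\le H_h\to\int h\,d\nu<0$. Here $H_h$ is the Perron solution and $\nu$, the balayage of $\delta_0$, does not charge $\{0\}$.
\end{itemize}
So $P(h)$ has no continuous extension to $\overline\Omega$. The boundary assertion genuinely needs B-regularity or the monotone approximation property, not just the weak approximation property.
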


\begin{remark}\label{rem:analyt disks}
In \cite{Pol91} (see also \cite[Proposition 2.1 and Theorem 2.2]{LS1}), a slightly different approach was developed. Namely,  it was shown that one can restrict to the Jensen measures given by closed analytic disks $f:\overline\D\to \Omega$ where $f$ is a holomorphic function such that $f(0)=z$; the collection of such maps is denoted by $A_z$, $z\in \Omega$. Then the measure $\mu_f$ on $\Omega$, $f\in A_z$, given by 
\[
\int_\Omega h\,d\mu_f:=\int_{\partial\D} f^*h\,d\sigma, \quad
h\in C(\overline\Omega),
\]
belongs to $\J_z$; here $\sigma$ is the normalized Lebesgue measure on $\partial\D$, and by monotone convergence theorem for integrals, $\mu_f$ can be extended to semicontinuous functions. By the main result of \cite{Pol91} (see also \cite[Proposition 2.1 and Theorem 2.2]{LS1}),
\beq\label{eq:Poletsky}
P(h)(z)=\inf_{f\in A_z} \int_\Omega h\,d\mu_f
\eeq
for any $h$ u.s.c. on $\Omega$.
\end{remark}

\subsection{Envelopes on compact manifolds}
Let $(X,\omega)$ be a compact K\"ahler manifold of complex dimension $n$ and let $\PSH{X, \omega}$ be the collection of all $\omega$-psh functions. We also set $\PSHc{X, \omega}:= \PSH{X, \omega} \cap C(X)$. We then let $P_\omega, P^c_\omega$ denote the corresponding $\omega$-psh envelopes.

In contrast to the local case, $\PSH{X, \omega}$ is not a cone, so Edwards' theorem cannot be applied directly to this case. One can, however, modify it by replacing Jensen measures by {\sl Jensen pairs}. 

We say that a pair $(\mu,b)$, $\mu$ being positive measure on $X$ and $b\in\R$,  is a {\it Jensen pair} with barycenter at $x\in X$, if for all $ u\in\PSH{X, \omega}$ 
\[
u(x)\le\int_X u\,d\mu +b.
\]
The collection of all such pairs is denoted by $\JP_{\omega,x}$. We emphasize that both $\mu$ and $b$ depend on $x\in X$.

\begin{example}\label{ex:Magnusson}
Examples of Jensen pairs are given through closed analytic disks: fix $x\in X$ and let $f:\overline\D\to X$ be a holomorphic map such that $f(0)=x$. We then consider a positive measure $\mu$ such that $f^*\mu$ is the normalized Lebesgue measure on the circle $\partial \D$ and we denote by $R_{f^*\omega}$ the potential of $f^*\omega$ on $\D$ (i.e. $f^*\omega=dd^c R_{f^*\omega}$) normalized such that $R_{f^*\omega} |_{\partial \D}=0$. We then claim that $(\mu, R_{f^*\omega}(0))$ is a Jensen pair. Indeed, given that $R_{f^*\omega}+f^*u$ is psh on $\D$, the mean value inequality gives
 $$(R_{f^*\omega}+f^*u)(0) \leq \frac{1}{2\pi} \int_{\partial \D} (R_{f^*\omega}+f^*u) \, d\sigma = \frac{1}{2\pi} \int_{\partial \D} f^*u \, d\sigma =\int_X u \, d\mu, $$
 that is equivalent to
 $$ u(x) \leq \int_X u \, d\mu -R_{f^*\omega}(0).$$
\end{example}

\smallskip
Given a bounded function $h$ on $X$, denote 
\[E_\omega(h)(x):=
\inf\left\{ \int_X h\,d\mu +b:\: (\mu,b)\in\JP_{\omega,x}\right\}.
\]
It follows directly from the definition that $P_\omega(h)\le E_\omega(h)$. Indeed, since $h\ge P_\omega(h)$, we have $\int_X h\,d\mu +b\ge \int_X P_\omega(h)\,d\mu +b\ge P_\omega(h)(x)$. One can then prove the following analog of Edwards' theorem.

\begin{theorem}\label{thm:Edwards}  If $h$ is a l.s.c function on $X$, then $P_\omega(h)=E_\omega(h)$.
\end{theorem}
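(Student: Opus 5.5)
The inequality $P_\omega(h)\le E_\omega(h)$ was already observed above, so only $E_\omega(h)\le P_\omega(h)$ needs proof. The plan is to reduce to the classical Edwards' theorem by a Hahn--Banach separation in $C(X)$, with the constant $b$ absorbing the non-conic nature of $\PSH{X,\omega}$. Three reductions come first.

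\emph{Reduction to continuous $h$.} Since $h$ is l.s.c. on the compact $X$, it is bounded below and is the increasing limit of continuous functions $h_k\uparrow h$. Suppose the equality is known for continuous data. Then $P_\omega(h_k)$ increases to some $\omega$-psh limit (after u.s.c. regularisation) which is $\le h$ quasi-everywhere, hence $\le P_\omega(h)$. On the other side we need $E_\omega(h)\le\lim_k E_\omega(h_k)$. I would get this by a compactness argument on pairs. Any Jensen pair with barycenter $x$ satisfies, by testing against the constants $u=\pm 1$ (which are $\omega$-psh), $\mu(X)=1$. Testing against a fixed bounded $\omega$-psh function also gives a lower bound on $b$. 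Near-optimal pairs $(\mu_k,b_k)$ for $h_k$ then have $b_k$ bounded above as well, since $h_k\ge\min h_1$. So a subsequence converges weak-$*$ to some $(\mu,b)$. This limit is again a Jensen pair, because $u\mapsto\int u\,d\mu$ is u.s.c. under weak-$*$ convergence for u.s.c. $u$. Finally $\int h\,d\mu\le\liminf\int h_k\,d\mu_k$ by the standard monotone/semicontinuity argument. Hence it suffices to treat $h\in C(X)$.

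\emph{Separation for continuous $h$.} Fix $x$ and suppose $c:=P_\omega(h)(x)<E_\omega(h)(x)$. Since $P_\omega(h)$ is u.s.c. and equals the sup of $\omega$-psh $v\le h$, we can use the continuous envelope. By Demailly/Blocki--Kolodziej regularisation, every $\omega$-psh function on a compact K\"ahler manifold is a decreasing limit of smooth $\omega$-psh functions; so we may set
\[K=\{v\in\PSHc{X,\omega}: v\le h\},\qquad \sup_{v\in K} v(x)=c.\]
In $C(X)\times\R$ consider the convex cone generated by the pairs $(v,\,v(x))$ with $v\in\PSHc{X,\omega}$, minus the cone of pairs $(g,s)$ with $g\ge 0$ and $s\le 0$. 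The relevant convexity is that $\PSH{X,\omega}$ is convex (not a cone), and $\lambda v$ is $\lambda\omega$-psh.

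It is cleaner to homogenise: work with the cone $\mathcal C=\{(v,\lambda)\,:\,\lambda\ge0,\ v\in\PSHc{X,\lambda\omega}\}$, viewed in $C(X)\times\R$, together with the functionals $(v,\lambda)\mapsto v(x)$. Because $(h,1)$ admits no $(v,1)\in\mathcal C$ with $v\le h$ and $v(x)>c$, Hahn--Banach separation of the convex set $\{(v,\lambda,v(x))\}$ from the point $(h,1,c+\varepsilon)$ yields a positive measure $\mu$ and a real $b$ such that $v(x)\le\int v\,d\mu+\lambda b$ for all $(v,\lambda)\in\mathcal C$, while $\int h\,d\mu+b<c+\varepsilon$. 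Positivity of $\mu$ comes from the monotone part of the cone. Putting $\lambda=1$ gives a Jensen pair whose value is below $E_\omega(h)(x)$, a contradiction.

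\emph{Main obstacle.} The delicate point is making this separation rigorous: ensuring the functional obtained has nonzero coefficient on the $v(x)$-coordinate, so that it can be normalised to $1$. This is where the presence of constants in $\PSH{X,\omega}$ and the bound $\mu(X)=1$ enter. It is also where the regularisation (density of continuous $\omega$-psh functions) is needed to pass from $\PSHc{X,\omega}$ to all of $\PSH{X,\omega}$ in the Jensen inequality. For the latter, if $u_j\downarrow u$ with $u_j$ continuous $\omega$-psh, monotone convergence transfers $u_j(x)\le\int u_j\,d\mu+b$ to $u$.
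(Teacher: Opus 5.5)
Your approach works, but it differs from the paper's. The paper never separates convex sets. For $h\in C(X)$ it does the following:
\begin{itemize}
\item[(a)] It proves concavity of $g\mapsto P_\omega(g)(x)$, using the scaling $P_{t\omega}(th)=tP_\omega(h)$.
\item[(b)] It takes a supporting line $a\tau+b$ at $\tau=1$ to the concave function $\tau\mapsto P_\omega(\tau h)(x)$; this is where $b$ comes from.
\item[(c)] It extends $\tau h\mapsto a\tau$ from $\R h$ to all of $C(X)$ by the convex Hahn--Banach theorem, subject to $L\ge P_\omega(\cdot)(x)-b$.
\item[(d)] It represents $L$ by a measure $\mu$, gets positivity by scaling, and passes to all of $\PSH{X,\omega}$ via \cite{BlKol}.
\end{itemize}
The l.s.c.\ case is delegated to \cite[Theorem 2.1]{W01}.

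You instead build the same scaling into the cone $\mathcal C\subset C(X)\times\R$. This turns the compact statement into the classical conic Edwards theorem, with $b$ as the multiplier dual to $\lambda$. The paper's analytic version normalizes the evaluation coefficient from the start, so no nondegeneracy issue arises; yours explains structurally why \emph{pairs} appear.

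Your reduction from l.s.c.\ to continuous $h$ is correct and more self-contained than the paper's citation. It takes near-optimal pairs with $\mu_k(X)=1$ and $0\le b_k$ bounded, passes to weak-$*$ limits, uses upper semicontinuity of $\nu\mapsto\int u\,d\nu$ for u.s.c.\ $u$, and uses $\int h\,d\mu\le\liminf_k\int h_k\,d\mu_k$. One correction: $\mu(X)=1$ requires testing all constants $u\equiv t$, $t\in\R$. The constants $\pm1$ alone only give $|1-\mu(X)|\le b$.

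The separation step, which you leave as the main obstacle, needs to be written out. Your first description has the orientation reversed: subtracting $(g,s)$ with $g\ge0$, $s\le0$ produces $(v-g,\,v(x)+|s|)$. Separating from the bare set $\{(v,\lambda,v(x))\}$ gives no sign information. Take instead
\[
\mathcal K=\{(v+g,\lambda,s):\ (v,\lambda)\in\mathcal C,\ g\in C(X),\ g\ge 0,\ s\le v(x)\}.
\]
This is a convex cone. It contains a neighbourhood of $(1,1,-1)$, because constants lie in every $\PSHc{X,\lambda\omega}$. Moreover $(h,1,c+\varepsilon)\notin\mathcal K$ simply because $\sup_K v(x)\le c$. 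So regularization is not needed here; it is needed only at the end, to extend the Jensen inequality from $\PSHc{X,\omega}$ to $\PSH{X,\omega}$.

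Geometric Hahn--Banach then gives $\Lambda(f,\lambda,s)=-\int f\,d\mu+\beta\lambda+\gamma s\neq0$ with $\Lambda\le0$ on $\mathcal K$ and $\Lambda(h,1,c+\varepsilon)\ge0$. Hence $\mu\ge0$ and $\gamma\ge0$. Testing constants $v\equiv t$ with $\lambda=0$ gives $\mu(X)=\gamma$. If $\gamma=0$, then $\mu=0$; testing $(0,\lambda,0)$ with $\lambda>0$ gives $\beta\le0$, and the point gives $\beta\ge0$. So $\Lambda=0$, a contradiction. Normalizing $\gamma=1$ and setting $b=-\beta$ yields your Jensen pair with $\int h\,d\mu+b\le c+\varepsilon$. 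The inequality is non-strict, which suffices.

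Finally, both you and the paper use $P_\omega(h)\le h$ pointwise for the easy inequality. For merely l.s.c.\ $h$, this forces $P_\omega(h)$ to be read as the unregularized supremum. With the upper regularization, the function equal to $0$ except for the value $-1$ at a single point is a counterexample. Your compactness argument in fact proves $E_\omega(h)=\sup\{v\in\PSH{X,\omega}:\ v\le h\}$.
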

\begin{proof}
We first assume $h\in C(X)$. Note that, the map $h\mapsto P_\omega(h)$ is concave. Indeed for any $t\in (0,1)$, we have
\[
P_\omega((1-t)h_0+t\,h_1)\ge P_{(1-t)\omega}((1-t)h_0)+P_{t\omega}(t\,h_1)=(1-t)P_\omega(h_0)+tP_\omega(h_1).
\]
This means that, for any fixed $x\in X$ and any $h$, the function $p(\tau):=P_\omega(\tau h)(x)$ is concave in $\tau\in \R$ and, therefore, there exists an affine function $r(\tau):=a\tau +b\ge p(\tau)$ for all $\tau\in\R$ and such that $r(1)=p(1)$ (a supporting line to the undergraph of $p$ at $1$). We emphasize here that both $a$ and $b$ depend on the point $x\in X$.\\
The linear function $r(\tau)-b$ defines a linear functional $l$
on the $1$-dimensional subspace $M_h=\{\tau h:\: \tau\in\R\}$ of $C(X)$ such that $l(\tau h) \ge P_\omega(\tau h)(x)-b$, for any $\tau \in \R$ and $l(h)=P_\omega(h)(x)-b$. 

By the convex version of the Hahn-Banach theorem (see, for example, \cite[Theorems 1.33 and 1.38]{BP}),
$l$ extends to a linear functional $L$ on $C(X)$ such that $L(g)\ge P_\omega(g)(x)-b$ for all $g\in C(X)$. Moreover, by Riesz' representation theorem, $L(g)=\int_X g\,d\mu$ for a real measure $\mu$ on $X$. \\
This gives 
\begin{equation}\label{ineq-Jensen pairs}
P_\omega(g)(x) \le \int_X g\,d\mu+b \quad \forall g\in C(X),
\end{equation}
and 
\begin{equation}\label{eq-Jensen pairs}
P_\omega(h)(x)= \int_X h\,d\mu+b.
\end{equation}
In addition, since $P_\omega(g)\ge 0$ for any $g\ge 0$, we have 
$\int_X g\,d\mu\ge -b$ for all $g\ge 0$. Applying this to $\tau g$ with $\tau\to +\infty$, we get $\int_X g\,d\mu\ge 0$, so $\mu$ is a positive measure.

Now, given any $u\in \PSH {X,\omega}$ there exists a sequence $g_j\in C(X)$ decreasing to $u$ \cite{BlKol} (we can even take them to be $\omega$-psh, however we do not need this here). Thus, using \eqref{ineq-Jensen pairs} and monotone convergence theorem we get
\[
u(x)=P_\omega(u)(x)\le\lim_{j\to\infty}P_\omega(g_j)(x)
\le \lim_{j\to\infty} \int_X g_j\,d\mu+b =\int_X u\,d\mu+b.
\]
This means that $(\mu,b)\in\JP_{\omega,x}$. The conclusion follows from  \eqref{eq-Jensen pairs}.

To conclude, we use an approximation argument from \cite[Theorem 2.1]{W01}, and prove that the equality still holds for all l.s.c. functions $h$. 
\end{proof}

\smallskip

Next, we give a simple proof, which uses Jensen measures, of a result established in \cite{GLZ} using bounds for the Laplacians of smooth approximations (obtained in \cite{Berm}).

\begin{theorem}\label{thm:omega_cont} 
If $h\in C(X)$, then $P_\omega(h)\in C(X)$.
\end{theorem}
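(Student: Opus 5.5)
The plan is to show that $P_\omega(h)$ is both upper and lower semicontinuous. Upper semicontinuity is automatic: $h$ is continuous, hence u.s.c., so no regularization is needed. Thus $P_\omega(h)=\sup\{v\in\PSH{X,\omega}: v\le h\}$ coincides with its u.s.c. regularization and is itself $\omega$-psh.

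The content is therefore lower semicontinuity, which we obtain from Theorem~\ref{thm:Edwards}. It gives $P_\omega(h)=E_\omega(h)$ and, more importantly, the same identity $P_\omega(g)=E_\omega(g)$ for every $g\in C(X)$.

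\textbf{Step 1: smooth approximants.} Fix $\varepsilon>0$ and choose $g\in C^\infty(X)$ with $h-\varepsilon\le g\le h$. By monotonicity, $P_\omega(g)\le P_\omega(h)\le P_\omega(g)+\varepsilon$. It is therefore enough to show that $P_\omega(g)$ is l.s.c.; then $P_\omega(h)$ is a uniform limit of continuous functions.

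\textbf{Step 2: lower semicontinuity for smooth $g$.} Since $X$ is compact and $g$ is smooth, there is $A>0$ with $dd^c g\ge -A\omega$. Then $v:=g/(1+A)$ satisfies $\omega+dd^c v\ge \tfrac{1}{1+A}(\omega+dd^cg)\ge0$ wherever that makes sense. More directly, set $\lambda=1/(1+A)$; then $\lambda g$ is $\omega$-psh. This gives continuous $\omega$-psh competitors.

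Lower semicontinuity now follows by a convexity trick. Write $\varphi=P_\omega(g)$, fix $x_0$ and $\delta>0$. Using the Jensen-pair description and the concavity of $h\mapsto P_\omega(h)$ from the proof of Theorem~\ref{thm:Edwards}, the function $(1-\lambda)\varphi+\lambda g$ is an $\omega$-psh minorant of $g$, and it improves on $\varphi$ near contact points.

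\textbf{Alternative: a Walsh-type translation argument.} This is cleaner. Cover $X$ by charts and compare $\varphi$ with local translates. On a compact manifold translations are unavailable, so we would instead use Demailly's regularization: continuous, indeed smooth, $\varphi_j\in\PSH{X,(1+\varepsilon_j)\omega}$ decreasing to $\varphi$.

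Then $\psi_j:=(1+\varepsilon_j)^{-1}(\varphi_j-\eta_j)$ is $\omega$-psh and continuous. Choose $\eta_j$ so that $\psi_j\le g+o(1)$; this is possible by Dini's theorem applied to $\max(\varphi_j,g)$, which decreases to the continuous function $g$, hence uniformly. Consequently $\psi_j-o(1)\le P_\omega(g)$, while $\psi_j\to\varphi$ pointwise from above up to $o(1)$. Therefore $\varphi=\sup_j(\psi_j-o(1))$ is a supremum of continuous functions, hence l.s.c.

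\textbf{Main obstacle.} The crux is producing continuous $\omega$-psh approximants of $\varphi$ from above with controlled loss of positivity. If one wants to avoid Demailly regularization, in the spirit of ``Jensen measures'', the alternative is to use \eqref{ineq-Jensen pairs} directly. In that route one would show that $x\mapsto E_\omega(g)(x)$ is l.s.c. by taking weak limits of Jensen pairs $(\mu_k,b_k)$ at points $x_k\to x$. This needs uniform bounds on $\mu_k(X)$ and on $b_k$, obtained by testing against constants and a fixed $\omega$-psh function. One would also need that the limit pair is Jensen at $x$, which is the delicate point since $\omega$-psh functions are only u.s.c. I would try the regularization route first.
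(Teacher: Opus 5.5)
Your regularization argument (the paragraph labelled ``Alternative'') is a correct and complete proof. It is not the paper's route, and despite your opening sentence it never uses Theorem~\ref{thm:Edwards}.

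The paper works on the dual side. It repeats the Hahn--Banach/Riesz argument to get $P^c_\omega(h)=E^c_\omega(h)$ for the continuous envelope. It then shows $\JP_{\omega,x}=\JP^c_{\omega,x}$ for every $x$, using the decreasing approximation of \cite{BlKol} and monotone convergence. Hence $E_\omega=E^c_\omega$, and $P_\omega(h)=P^c_\omega(h)$ is both u.s.c.\ and l.s.c.

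You instead prove $P_\omega(h)=P^c_\omega(h)$ directly, on the primal side:
\begin{itemize}
\item Set $\varphi=P_\omega(h)$, which is bounded since $\min h\le\varphi\le h$.
\item Take smooth $\varphi_j\in\PSH{X,(1+\varepsilon_j)\omega}$ decreasing to $\varphi$ (Demailly; \cite{BlKol} even allows $\varepsilon_j=0$).
\item Put $\delta_j:=\max_X\big(\max(\varphi_j,h)-h\big)$, which tends to $0$ by Dini.
\item The functions $\psi_j=(1+\varepsilon_j)^{-1}(\varphi_j-\delta_j-\varepsilon_j\|h\|_\infty)$ are continuous, $\omega$-psh and $\le h$, hence $\le\varphi$, and they converge pointwise to $\varphi$.
\end{itemize}
So $\varphi=\sup_j\psi_j$ is l.s.c.

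Both proofs rest on the same regularization theorem. Yours is shorter and shows that the Jensen-pair duality is not really needed for this statement. What the paper's route buys is the representation $P_\omega(h)=E_\omega(h)$ by Jensen pairs, and the parallel with the local theory, where $\J_z=\J^c_z$ is equivalent to a (weak) approximation property.

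Your closing Jensen-pair sketch is essentially the paper's argument. The ``delicate point'' you name — that a weak limit of Jensen pairs is a priori Jensen only for continuous $\omega$-psh functions — is exactly what the identity $\JP_{\omega,x}=\JP^c_{\omega,x}$ settles.

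Two clean-ups are needed.
\begin{itemize}
\item Step 1 is unnecessary: Dini only needs continuity, so you can work with $h$ directly.
\item The ``convexity trick'' in Step 2 is wrong and should be deleted. If $(1-\lambda)\varphi+\lambda g$ were an $\omega$-psh minorant of $g$, maximality of $\varphi$ would give $(1-\lambda)\varphi+\lambda g\le\varphi$, i.e.\ $g\le\varphi$, forcing $g=\varphi$. Indeed $\omega+dd^c\big((1-\lambda)\varphi+\lambda g\big)=(1-\lambda)(\omega+dd^c\varphi)+\lambda(\omega+dd^c g)$, and only $\omega+\lambda\,dd^c g\ge 0$ is known, not $\omega+dd^c g\ge0$.
\end{itemize}
Nothing in your final argument depends on Step 2.
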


\begin{proof}
    As in the local case, one can also consider $\omega$-psh envelopes $P_\omega^c$ with respect to continuous $\omega$-psh functions, corresponding Jensen pairs $\JP_{\omega,x}^c$, and the lower envelope $E_\omega^c(h)$. By repeating the arguments of the proof of Theorem~\ref{thm:Edwards} verbatim, we get the equality $P_\omega^c(h)=E_\omega^c(h)$ for any $h\in C(X)$.

\medskip 
We observe that the collections of the Jensen pairs for all $\omega$-psh functions and for the continuous ones {\sl always} coincide. Indeed, for every $x\in X$ we clearly have $\JP_{\omega,x}\subseteq \JP_{\omega,x}^c$. \\
For the reverse, let $(\mu,b)\in \JP_{\omega,x}^c$. By \cite{BlKol}, any $u\in\mathcal{PSH}(X, \omega)$ is the limit of a decreasing sequence of $u_j\in\mathcal{PSH}^c(X, \omega)$. By monotone convergence theorem we have
\[
u(x)=\lim_{j\to\infty}u_j(x)\le \lim_{j\to\infty} \int_X u_j\,d\mu+b =\int_X u\,d\mu+b,
\]
so $(\mu,b)\in \JP_{\omega,x}$. This implies $E_\omega(h)=E_\omega^c(h)$ and thus, $P_\omega^c(h)=P_\omega(h)$ for any $h\in C(X)$. The conclusion follows since $P_\omega^c(h)$ is l.s.c. and $P_\omega(h)$ is u.s.c.
\end{proof}

\begin{remark}
    In \cite{Mag11, Mag12}, Poletsky's theory of analytic disks, see Remark~\ref{rem:analyt disks}, was applied to $\omega$-psh functions on compact K\"ahler manifolds. Namely, it was shown that, given an u.s.c. function $h$, the envelope 
 $P_\omega(h)$ can be computed by taking the minimum over the Jensen pairs generated by closed analytic disks $f:\overline\D\to X$ (see Example~\ref{ex:Magnusson}). More precisely, it was shown that
    \[
    P_\omega(h)(x)=\inf\left\{\frac1{2\pi}\int_{\partial\D} h\circ f\,d\sigma -R_{f^*\omega}(0):\: f(0)=x\right\}.
    \]
\end{remark}

\section{Continuity of geodesics on domains}
 Let $\Omega$ be a bounded domain. We recall that the geodesic $u_t$ between two psh functions $u_0$ and $u_1$ is defined as $$u_t(z)=u_{\log|\zeta|}=\wu(z,\zeta)$$ where $\widehat u:=\sup\{v\in W(u_0,u_1)\}$ (see Section \ref{sec: pre}). We start with the following simple observation.


    \begin{proposition}\label{prop:env_geod}
    Let $u_0,u_1\in\PSH\Omega\cap L^\infty(\Omega)$. Then the function $\wu$ defined above is the largest solution of \eqref{MA_geo} and it equals the psh envelope in $\Omega\times\cA$ of the function 
    \beq\label{eq:Psi}
\Psi_u(z,\zeta):= 
(1-\log|\zeta|)\,u_0(z)+\log|\zeta|\,u_1(z),\quad (z,\zeta)\in\Omega\times \overline\cA.
\eeq 
In other words,
    \beq\label{eq:env_geod}
    \wu =P_{\Omega\times\cA}(\Psi_u).
    \eeq
         \end{proposition}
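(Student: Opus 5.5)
We prove two inequalities between $\wu$ and $P:=P_{\Omega\times\cA}(\Psi_u)$, and then treat maximality and the ``largest solution'' claim separately. Throughout, $t=\log|\zeta|$.

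\emph{Step 1: $\wu\le P$.} By \eqref{VtM}, $\wu(z,\zeta)=u_t(z)\le(1-t)u_0(z)+tu_1(z)=\Psi_u(z,\zeta)$ on $\Omega\times\cA$. This bound comes from convexity of $t\mapsto u_t$ together with the boundary limits $u_t\to u_j$. Since $\wu$ is psh and lies below $\Psi_u$ everywhere, it is a competitor in the definition of $P$, so $\wu\le P$.

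\emph{Step 2: $P\le\wu$.} Take any $v\in\PSH{\Omega\times\cA}$ with $v\le\Psi_u$ quasi-everywhere. Since $v$ is psh, the inequality $v\le\Psi_u$ holding outside a pluripolar set upgrades to every point where $\Psi_u$ is u.s.c.; I would check this via sub-mean-value on small balls. The function $\Psi_u$ is a convex combination, with continuous coefficients, of the psh functions $u_0,u_1$, and is therefore u.s.c. Hence $v\le\Psi_u$ everywhere. Now fix $z$ and let $\zeta\to\cA_0$, i.e. $t\to0$:
\[
\limsup v(z,\zeta)\le\limsup\bigl((1-t)u_0(z)+tu_1(z)\bigr)=u_0(z),
\]
using that $u_0,u_1$ are bounded. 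The same argument gives $\limsup v\le u_1$ as $\zeta\to\cA_1$. So $v\in W(u_0,u_1)$, and therefore $v\le\wu$. Taking the supremum and then the u.s.c. regularization, and using that $\wu$ is already psh and hence u.s.c., we get $P\le\wu$.

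\emph{Step 3: $\wu$ is the largest solution of \eqref{MA_geo}.} The text above \eqref{MA_geo} already shows that $\wu$ is a solution: it is maximal as a Perron envelope, and its boundary values are attained uniformly by \eqref{VtM}. For the ``largest'' part, let $w$ be any psh solution of \eqref{MA_geo}. Its boundary condition $\limsup_{\zeta\to\cA_j}w=u_j$ is exactly the membership condition for $W(u_0,u_1)$, so $w\in W(u_0,u_1)$ and hence $w\le\wu$.

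The only genuinely delicate point is the quasi-everywhere versus everywhere issue in Step 2. The cleanest route is to bypass it: use that $\Psi_u$ is u.s.c., so the sup of competitors below $\Psi_u$ is already below $\Psi_u$, and by Choquet's lemma the regularization adds nothing beyond a psh function that is still $\le\Psi_u$ (apply sub-mean-value on small polydiscs to $v$ against the u.s.c. majorant $\Psi_u$). One could instead work directly with the boundary conditions, which only involve $\limsup$ and are insensitive to pluripolar sets. Boundedness of $u_0,u_1$ is used in Step 2 to control the term $tu_1$ as $t\to0$, and similarly $(1-t)u_0$ as $t\to1$.
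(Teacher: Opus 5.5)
Your proof is correct and follows essentially the same argument as the paper. The inequality $\wu\le\Psi_u$ gives $\wu\le P_{\Omega\times\cA}(\Psi_u)$, the convergence $\Psi_u(z,\cdot)\to u_j(z)$ as $\zeta\to\cA_j$ puts everything below $\Psi_u$ into $W(u_0,u_1)$ for the reverse inequality, and every solution of \eqref{MA_geo} lies in $W(u_0,u_1)$, which gives the \emph{largest solution} claim. The only differences are cosmetic: you check membership in $W(u_0,u_1)$ for each competitor and handle the u.s.c. regularization explicitly, whereas the paper asserts directly that the envelope $P_{\Omega\times\cA}(\Psi_u)$ itself belongs to $W(u_0,u_1)$.
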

 Here by the ``largest solution" we mean the upper envelope of all solutions.
\begin{proof}
For the first statement we observe that since $\hat{u}$ is a solution, we have $\hat{u} \leq \sup\{w\in \PSH{D} \, :\, w\;{\rm{is\; a\; solution\; of}}\; \eqref{MA_geo}\}$. On the other hand $$\{ {\rm{ solutions\, of}}\, \eqref{MA_geo}\}\subseteq \{ v \, :\, v\in W(u_0,u_1)\},$$
that gives the reverse inequality.\\
   For the second statement we observe that $\wu\le P_{\Omega\times\cA}(\Psi_u)$ since $\wu\le \Psi_u$, and then the equality results from the fact that $\Psi_u=u_j$ on $\Omega \times \cA_j$ and thus, $P_{\Omega\times\cA}(\Psi_u)\in W(u_0,u_1)$. 
\end{proof}
\smallskip

The geodesic is Lipschitz in $t$ for any bounded $u_0,u_1$.  Indeed by using argumetns from \cite{Bern}, we get

\begin{proposition}\label{prop:Lip in t}
    Let $u_0,u_1\in \PSH{\Omega}$ be bounded functions. Then the geodesic $u_t$ is Lipschitz in $t\in {[0,1]}$, uniformly in $\Omega$: 
    $$|u_t-u_s|\le M|t-s|,$$ 
    where $M=\|u_0-u_1\|_\infty$.
\end{proposition}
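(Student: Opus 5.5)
The plan rests on two facts already established in Section~\ref{sec: pre}: the function $t\mapsto u_t(z)$ is convex on $(0,1)$ for every fixed $z\in\Omega$, and the two-sided bound \eqref{VtM}
\[
\max\{u_0-Mt,\,u_1-M(1-t)\}=V_t\le u_t\le (1-t)u_0+tu_1,
\]
where $M=\|u_0-u_1\|_\infty$. The idea is that the endpoint behaviour controls the slopes of a convex function. We set $u_t$ at $t=0,1$ to be $u_0,u_1$; by \eqref{VtM} this extension is continuous in $t$, since $u_t\to u_j$ uniformly. The extended function is still convex on the closed interval $[0,1]$, because the convexity inequality passes to the limit at the endpoints.

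\textbf{Step 1: the slope at $t=0$.} We bound the difference quotients from the left endpoint. From \eqref{VtM},
\[
u_t(z)-u_0(z)\le t\,(u_1(z)-u_0(z))\le Mt
\quad\text{and}\quad
u_t(z)-u_0(z)\ge -Mt .
\]
Hence $|u_t(z)-u_0(z)|/t\le M$. By convexity this quotient is nondecreasing in $t$, so the right derivative $\partial_t^+u_t|_{t=0}$ is at least $-M$.

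\textbf{Step 2: the slope at $t=1$.} By the symmetric estimate,
\[
u_t(z)-u_1(z)\le (1-t)(u_0-u_1)\le M(1-t)
\quad\text{and}\quad
u_t(z)-u_1(z)\ge -M(1-t).
\]
Hence the left derivative at $t=1$ is at most $M$.

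\textbf{Step 3: conclusion.} For a convex function on $[0,1]$, every difference quotient $(u_t-u_s)/(t-s)$ with $0\le s<t\le 1$ lies between the right derivative at $0$ and the left derivative at $1$. Therefore
\[
-M\le \frac{u_t(z)-u_s(z)}{t-s}\le M,
\]
and this holds uniformly in $z$, since $M$ does not depend on $z$.

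The only delicate point is justifying convexity up to the endpoints together with the finiteness of the one-sided derivatives there. Both follow from the uniform bounds in \eqref{VtM}, which are exactly the estimates used in Steps 1 and 2; no further obstacle is expected.
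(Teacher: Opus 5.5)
Your proof is correct and follows essentially the same route as the paper: the lower barrier $V_t$ from \eqref{VtM} gives $\partial_t^+u_t|_{t=0}\ge -M$ and $\partial_t^-u_t|_{t=1}\le M$, and convexity of $t\mapsto u_t(z)$ places every difference quotient between these two bounds. Your additional use of the upper bound in \eqref{VtM}, and your explicit justification of convexity up to the endpoints, are harmless details that the paper leaves implicit.
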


\begin{proof} As we already observed
$$V_t:=\max\{u_0-M\,t,u_1-M\,(1-t)\},$$ is a subgeodesic (see \eqref{VtM}). 
In particular, the inequality $V_t\le u_t$ implies that for any $z\in \Omega$
\[
\frac{u_t(z)-u_0(z)}t\ge \frac{V_t(z)-u_0(z)}t =\frac{V_t(z)-V_0(z)}t,
\]
so
\[
\dot{u}_0(z)=\lim_{t\to 0^+}\frac{u_t(z)-u_0(z)}t\ge 
\lim_{t\to 0^+}\frac{V_t(z)-V_0(z)}t \ge-M.
\]
Similarly, $\dot{u}_1(z)\le M$. Since  $u_t$ is convex in $t$, the ratio
\[
\frac{u_t(z)-u_s(z)}{t-s}, \quad 0<s<t<1,
\]
is increasing both in $s$ and $t$, so 
\[
-M\le \dot{u}_0\le \frac{u_t(z)-u_s(z)}{t-s} 
\le \dot{u}_t\le \dot{u}_1\le M, \quad 0<s<t<1.
\]
The conclusion then follows.
\end{proof}

In what follows we study continuity of the geodesics (up to the boundary). We will first treat the case when the boundary values on $\partial \Omega \times \mathcal{A}$ are zero, and then we will consider the case of continuous boundary values.

\subsection{Continuity: case of zero boundary values}
We consider the following Dirichlet problem 
\begin{equation}\label{eq: Dirichlet with boundary}
\begin{cases}
(dd^c  w)^{n+1}=0\qquad  \textit{on}\; D:=\Omega\times\cA\subset\C^{n+1}\\
 w=u_j \hspace{2cm} \textit{on}\;  \Omega\times\cA_j, j=0,1\\
 w=0 \hspace{2.2cm}  \textit{on}\; \partial\Omega\times\cA.
\end{cases}
\end{equation}
We emphasize that, in contrast with the global case, there is no uniqueness in the Dirichlet problem \eqref{eq: Dirichlet with boundary} without specifying the boundary conditions on $\partial\Omega\times\cA$  as well. For example, the function $w(z,\zeta)=\log|\zeta|(\log|\zeta|-1)\in\PSHc{D}$ connects $u_0\equiv 0$ with $u_1\equiv 0$ and satisfies $(dd^c w)^{n+1}=0$ in $D$. 

In \cite[Theorem 2.4]{A} was shown that, if $\Omega$ is strongly pseudoconvex and $u_0,u_1\in \PSHc{\Omega}$ equal zero on $\partial\Omega$, then the function $\widehat u$ solves \eqref{eq: Dirichlet with boundary} and belongs to $C (\overline{D})$.
We observe that requiring $u_0=u_1=0$ on $\partial\Omega$ forces $\Omega$ to be hyperconvex (cf. the definition in Section \ref{sec: pre} and take $\rho=u_j$, $j=0,1$). We reprove and slightly generalize \cite[Theorem 2.4]{A} in the case $\Omega$ is merely hyperconvex (not necessarily strongly pseudoconvex). Note that the regularity statement does not automatically follow from the general result of Bedford and Taylor because $\Omega\times\cA$ is neither smooth nor strongly pseudoconvex.

 \begin{theorem}\label{thm:geod_cont}
 Let $\Omega$ be  a bounded hyperconvex domain and $u_0,u_1\in\PSHc\Omega$, $u_0=u_1=0$ on $\partial\Omega$. Then $\widehat u $ belongs to $\PSHc{D}$ and is the unique psh function  that satisfies \eqref{eq: Dirichlet with boundary}.
 \end{theorem}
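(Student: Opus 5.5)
The plan is to trap $\widehat u$ between two continuous barriers that agree on the whole boundary of $D$, and then obtain interior continuity from a translation/maximality argument. Uniqueness will follow from the comparison principle.

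\emph{Barriers.} The upper barrier is already available: by \eqref{VtM}, $\widehat u\le \Psi_u$, and $\Psi_u$ is continuous on $\overline D$, equals $u_j$ on $\Omega\times\cA_j$ and vanishes on $\partial\Omega\times\cA$. For a lower barrier, take the subgeodesic $V_t=\max\{u_0-Mt,\,u_1-M(1-t)\}$. It has the correct values on $\Omega\times\cA_j$ but not on $\partial\Omega\times\cA$, where it equals $\max\{-Mt,-M(1-t)\}<0$. I would correct this as follows. Choose a bounded exhaustion $\rho\in\PSHc\Omega$ with $\rho=0$ on $\partial\Omega$; one may take $\rho=\max\{u_0,u_1\}$ or any continuous exhaustion, and then $u_j\ge A\rho$ for a large $A$ near $\partial\Omega$, or after replacing $u_j$ by $\max\{u_j, A\rho\}$ and using the continuity of $u_j$ together with $u_j=0$ on $\partial\Omega$. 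Then $w:=\max\{V_t,\, A\rho\}$ works: it is psh, continuous on $\overline D$, equals $0$ on $\partial\Omega\times\cA$, and equals $u_j$ on $\Omega\times\cA_j$ provided $A\rho\le u_j$ there. If this last inequality fails, I would use $\max\{u_0,A\rho\}$ in place of $u_0$ and a limiting argument, relying on the uniform continuity of $u_j$ up to the boundary. Since $w\le\widehat u\le\Psi_u$, the function $\widehat u$ attains the boundary values of \eqref{eq: Dirichlet with boundary} continuously at every point of $\partial D$, with a modulus of continuity controlled uniformly by those of $w$ and $\Psi_u$.

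\emph{Interior continuity (main obstacle).} Since $\widehat u$ is independent of $\arg\zeta$, it is automatically continuous in $\zeta$ by convexity in $t$ (Proposition~\ref{prop:Lip in t}); the issue is continuity in $z$. I would follow Walsh's argument. Let $\varepsilon>0$. By uniform convergence to the boundary data, there is a compact $K\Subset\Omega$ and $\delta_0>0$ such that $|\widehat u(z+h,\zeta)-\widehat u(z,\zeta)|<\varepsilon$ whenever $z\notin K$ or $t\in(0,\delta_0)\cup(1-\delta_0,1)$. For $|h|$ small, consider
\[
\max\{\widehat u(z+h,\zeta)-\varepsilon,\ \widehat u(z,\zeta)\}
\]
on the region $z\in K'$, and $\widehat u(z,\zeta)$ elsewhere. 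Gluing gives a psh competitor in $W(u_0,u_1)$. Hence $\widehat u(z+h,\zeta)-\varepsilon\le \widehat u(z,\zeta)$, and by symmetry the two values differ by at most $\varepsilon$. This gives uniform continuity in $z$, and hence $\widehat u\in\PSHc D$. The delicate part is the gluing near $\partial\Omega$. I would handle it by applying the estimate with the barrier bounds on the collar $\Omega\setminus K$ and at $t$ near $0,1$, which is exactly where the continuity of $u_j$ and $u_j|_{\partial\Omega}=0$ are used. If translation is awkward because $\Omega$ is not translation-invariant, I would instead use Theorem~\ref{thm:cutb}-type reasoning: regularize $\widehat u$ by Richberg/convolution on compacts, then take the envelope.

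\emph{Uniqueness.} Maximality \eqref{eq: MA_geo} is known. If $v$ is psh, bounded, solves \eqref{eq: Dirichlet with boundary} and is continuous to the boundary, then $v\in W(u_0,u_1)$, so $v\le\widehat u$. Conversely, $\widehat u$ is a bounded psh function whose boundary values on $\partial D$ agree with those of the maximal function $v$. The Bedford–Taylor comparison principle on the bounded domain $D$, which needs only $\liminf(v-\widehat u)\ge0$ at $\partial D$ and not smoothness of $\partial D$, then yields $\widehat u\le v$.
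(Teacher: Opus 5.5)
Your proof is correct and has the same skeleton as the paper's: squeeze $\wu$ between a subgeodesic and $\Psi_u$ that agree on $\partial D$, run Walsh's translation argument, and get uniqueness from comparison. The one step that is off as written is the lower barrier. The claim that $u_j\ge A\rho$ near $\partial\Omega$ for large $A$ is false in general, and the choice $\rho=\max\{u_0,u_1\}$ does not fix it. For instance, take a continuous exhaustion $\rho\ge -1$ and set $u_1=\rho$, $u_0=-(-\rho)^{1/2}$. Then $\max\{u_0,u_1\}=\rho$, and $u_0<A\rho$ near $\partial\Omega$ for every $A$. So your fallback is actually needed, and it should be written out for both endpoints. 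Put $u_j^A:=\max\{u_j,A\rho\}$. Then $0\le u_j^A-u_j\le \delta_A:=\max_j\sup_{\{A\rho>u_j\}}|u_j|$, and $\delta_A\to 0$ because $u_j\in C(\overline\Omega)$ vanishes on $\partial\Omega$. Hence $A\rho-\delta_A\le u_j$ on $\Omega$, so $A\rho-\delta_A\in W(u_0,u_1)$ and $\wu\ge A\rho-\delta_A$. This gives the uniform limit $0$ on $\partial\Omega\times\cA$. The paper avoids the limiting step altogether. Since $u_0,u_1\le 0$ by the maximum principle, $u_0+u_1\le\min\{u_0,u_1\}$, so $\max\{u_0-Mt,\,u_1-M(1-t),\,u_0+u_1\}$ is a subgeodesic with exactly zero boundary values on $\partial\Omega\times\cA$. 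In other words, $u_0+u_1$ is precisely the $A\rho$ you were looking for.

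For interior continuity you take a different but equally valid variant. You glue $\max\{\wu(z+h,\zeta)-\varepsilon,\wu\}$ back into $W(u_0,u_1)$, translating only in $z$ and getting continuity in $\zeta$ from Proposition~\ref{prop:Lip in t}. This uses nothing beyond the definition of $\wu$ as an upper envelope. The paper instead translates $D$ by an arbitrary $a\in\C^{n+1}$ and compares $\wu-\varepsilon$ with $\wu(\cdot+a)$ on $D\cap D_a$. That yields lower semicontinuity but relies on maximality of the translate, i.e.\ the comparison principle. Your Richberg/regularization fallback is unnecessary. Your comparison-principle argument for uniqueness supplies a point that the paper's proof leaves implicit.
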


\begin{proof}
 
We already know that $\widehat{u}\in \PSH D$ and that $(dd^c\widehat{u})^{n+1}=0$ in $D$. We need to show that $\widehat u|_{\partial D}=\Psi_u$ \eqref{eq:Psi}; note that  $\Psi_u\in C(\overline D)$.

Denote $\wz=(z,\zeta)$ and recall that 
$\wu(\wz)=\widehat u(z,|\zeta|)=
u_{\log|\zeta|}(z)=u_t(z)$ is the geodesic for $u_0,u_1$.
Let $M:=\|u_0-u_1\|_\infty$, then 
\[
v_t(z):=\max\{u_0-Mt,u_1-M(1-t),u_0+u_1\}
\]
is a subgeodesic since, by assumtpion and the maximum principle, we have $u_0, u_1\leq 0$. 
Therefore, 
$$v_t\le u_t\le (1-t)u_0 + t u_1,$$
which shows that $\widehat u(\wz)$ converges uniformly to $\Psi_u$ when $\wz\to\partial D$. We extend $\wu$ to $\partial D$ by setting $\wu=\Psi_u$ there.

Continuity of $\wu$ follows then from \cite[Theorem 2]{W}, For the sake of completeness, we present here the arguments inspired by \cite{GZ12,GZ}.

Since  $\lim_{\wz\to \xi\in\partial D} \wu(\widehat z)=\Psi_u(\xi)$  and $\Psi_u$ is uniformly continuous on $\partial D$ (a continuous function on a compact set is uniformly continuous there), then $\forall\varepsilon>0$ $\exists\delta>0$ such that 
\beq\label{eq:unifcont}
|\wu(\wz)-\Psi_u(\xi)|\le\varepsilon\quad \forall(\wz,\xi)\in D\times \partial D:\ |\wz-\xi|\le\delta.
\eeq
Choose $a\in\C^{n+1}$ such that $|a|<\delta$ and denote $D_a=D-a$ (i.e., we are translating the domain). 
By (\ref{eq:unifcont}), for any $\wz\in D_a\cap\partial D$ we have $\hat{z}+a \in {D}$ and $$\wu(\wz+a)\ge\Psi_u(\wz)-\varepsilon=\wu(\wz)-\varepsilon,$$
while for any $\wz\in D\cap \partial D_a$ we have $\hat{z}+a \in \partial{D}$ and
$$\wu(\wz+a)=\Psi_u(\wz+a)\ge \wu(\wz)-\varepsilon.$$ 
This shows that $\wu(\wz)-\varepsilon\le \wu(\wz+a)$ on $\partial(D\cap D_a)$ and thus in $D\cap D_a$, since $\wu(z+a)$ is maximal there.
Since $a$ is an arbitrary point with $|a|\le\delta$, this shows that $\wu$ is l.s.c.\\
Its upper semicontinuity as a psh function implies $\wu\in C(D)$. In view of the uniform convergence of $\widehat u(\wz)$ to $\Psi_u$ when $\wz\to\partial D$ (established above), we get $\wu\in C(\overline D)$.
\end{proof}

\subsection{Continuity: arbitrary continuous boundary values}

Now we drop the condition $u_0=u_1=0$ on $\partial\Omega$.
Let us consider  the {\it ``partial" Dirichlet problem} 
\beq\label{eq:PDP}
\begin{cases}
(dd^c w)^{n+1}=0\quad  {\rm on\ } D=\Omega\times\cA\\
 w|_{\Omega\times \cA_j}=u_j\hspace{1.2cm} j=0,1.
\end{cases}
\eeq

This problem goes back to \cite{Brem} where the author shows that, given a pseudoconvex domain $D$ of the form $D=\{V<0\}$ for a plurisubharmonic function $V$ on a neighborhood of $\overline D$ and a continuous function $\phi$ on the \v{S}ilov boundary $\partial^S D$ of $D$, the upper envelope $\widehat w$ of all $w\in\PSH{D}$ not exceeding $\phi$ on $\partial^S D$ is plurisubharmonic in $D$ and attains the boundary values $\phi$ on $\partial^S D$.

In the case of interest, $D=\Omega\times\cA$ where $\Omega$ is a hyperconvex domain. It is not difficult to see that $\partial^S D\subseteq\overline\Omega\times\partial\cA$.
Our next result generalizes the above by considering the boundary values on the all $\partial D$.

\smallskip

For arbitrary bounded hyperconvex domains $D$, boundary values of $w\in\PSHc{D}$ can be described in terms of Jensen measures (see Section~\ref{ssec:Jensen}) as follows:

\begin{theorem}\label{thm:range}
\cite[Theorem 3.5]{W01} For any bounded hyperconvex domain $D$, a function $\phi\in C(\partial D)$ can be extended to a function in $\PSHc{D}$ if and only if
\beq\label{eq:extension}
\phi(z)=\inf\left\{\int_{\partial D}\phi\,d\mu:\ \mu\in\J_z^c\right\} \quad\forall z\in\partial D.
\eeq
\end{theorem}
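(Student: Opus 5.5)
We prove the two implications separately. Necessity is elementary, while sufficiency uses the Edwards duality \eqref{Edwpshc} together with the fact that, on a hyperconvex domain, boundary Jensen measures live on $\partial D$.

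\emph{Necessity.} Let $w\in\PSHc D$ with $w=\phi$ on $\partial D$, and fix $z\in\partial D$. For every $\mu\in\J_z^c$ we have $\phi(z)=w(z)\le\int_{\overline D}w\,d\mu$. Since $D$ is hyperconvex, every $\mu\in\J^c_z$ with $z\in\partial D$ is supported on $\partial D$ by \cite{CCW}. Hence $\int_{\overline D} w\,d\mu=\int_{\partial D}\phi\,d\mu$, which gives $\phi(z)\le\inf_\mu\int_{\partial D}\phi\,d\mu$. The reverse inequality comes from taking $\mu=\delta_z$, which trivially lies in $\J^c_z$. This proves \eqref{eq:extension}.

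\emph{Sufficiency.} First we extend $\phi$ to some $h\in C(\overline D)$ by Tietze's theorem, and set $w:=P^c(h)$.
\begin{itemize}
\item[(a)] By \eqref{Edwpshc}, $w(z)=\inf_{\mu\in\J^c_z}\int h\,d\mu$ for all $z\in\overline D$. For $z\in\partial D$ these measures sit on $\partial D$, so \eqref{eq:extension} gives $w(z)=\phi(z)$ there.
\item[(b)] The function $w$ is l.s.c. on $\overline D$, being a supremum of continuous functions.
\end{itemize}
Step (b) does not by itself make $w$ continuous, so we do not try to show $w\in\PSHc D$ directly. Instead we build the extension by hand. Let $\rho\in\PSHc D$ be a bounded exhaustion with $\rho=0$ on $\partial D$; this exists since $D$ is hyperconvex.

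Fix $\varepsilon>0$ and $\xi\in\partial D$. By \eqref{Edwpshc} and (a), $P^c(h)(\xi)=\phi(\xi)$, so there is $v_\xi\in\PSHc D$ with $v_\xi\le h$ and $v_\xi(\xi)>\phi(\xi)-\varepsilon$. By continuity, $v_\xi>\phi-2\varepsilon$ on $U_\xi\cap\partial D$ for some neighbourhood $U_\xi$ of $\xi$. Compactness of $\partial D$ yields finitely many such functions, and their maximum $v_\varepsilon\in\PSHc D$ satisfies
\[
\phi-2\varepsilon\le v_\varepsilon\le \phi \quad\text{on }\partial D .
\]
Taking $\varepsilon=2^{-k}$ produces continuous psh functions $v_k$ that approximate $\phi$ uniformly on $\partial D$ from below.

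\emph{Gluing (main obstacle).} It remains to pass from uniform boundary approximations to a single continuous psh function with exactly the boundary values $\phi$. We will try the standard series-with-exhaustion device. Replace the $v_k$ by $\tilde v_k:=\max(v_1,\dots,v_k)$, which are increasing and satisfy $\phi-2^{1-k}\le \tilde v_k\le\phi$ on $\partial D$. Then define
\[
w:=\sup_k\max\bigl(\tilde v_k,\ \tilde v_{k+1}+A_k\rho\bigr),
\]
with constants $A_k$ chosen so that near $\partial D$ the terms glue continuously, in the style of Walsh's gluing lemma.

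Alternatively, define $w:=\sup\{v\in\PSHc D: v\le\phi \text{ on }\partial D\}$. This $w$ is l.s.c. on $D$ and tends to $\phi$ at $\partial D$ uniformly, because of the $v_k$ from below and a symmetric upper barrier from above. The upper barrier is obtained by applying Theorem~\ref{thm:cutb}-type continuity to $-\phi$ via harmonic-type majorants, or simply from $\phi\le\max\phi$ together with uniform continuity.

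Continuity inside $D$ then follows from the translation argument of Theorem~\ref{thm:geod_cont}: translates of $w$ agree up to $\varepsilon$ near $\partial D$, and maximality of the envelope gives the comparison inside. The delicate point is the uniform convergence at the boundary, namely controlling $w$ from above near $\xi$. We will handle it by using that $w\le$ the harmonic-like upper envelope obtained from the maximum principle applied to the continuous extension $h$ plus $-A\rho$ corrections.
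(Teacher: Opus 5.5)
The necessity argument is correct. So is the first half of sufficiency. Using \eqref{Edwpshc} and the fact that Jensen measures at boundary points of a hyperconvex domain live on $\partial D$, you get $P^c(h)=\phi$ on $\partial D$. The compactness argument then yields $v_k\in\PSHc{D}$ with $\phi-2^{1-k}\le v_k\le\phi$ on $\partial D$. (The paper gives no proof of its own; it quotes \cite[Theorem 3.5]{W01}.)

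The gap is the gluing step, which is the real content of sufficiency. Your displayed formula cannot work. Since $\rho\le 0$ and $(\tilde v_k)$ increases, $\tilde v_{k+1}+A_k\rho\le\tilde v_{k+1}$, so $\sup_k\max(\tilde v_k,\tilde v_{k+1}+A_k\rho)=\sup_k\tilde v_k$ whatever the $A_k$ are. That is just an increasing limit of functions in $\PSHc{D}$, which is only lower semicontinuous, and nothing controls the interior increments $\tilde v_{k+1}-\tilde v_k$.

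In the alternative route you correctly identify the upper bound at $\partial D$ as the crux, but the barriers you offer fail:
\begin{itemize}
\item $\max\phi$ gives no control at a given boundary point $\xi$.
\item Theorem~\ref{thm:cutb} needs the weak approximation property, which hyperconvex domains need not have. Using it on $-\phi$ would only help if $-\phi$ itself satisfied \eqref{eq:extension}.
\item $h-A\rho$ is not plurisuperharmonic, so no maximum principle gives $w\le h-A\rho$.
\end{itemize}
What would work there is the harmonic extension $H_\phi\in C(\overline D)$ of $\phi$. Every boundary point is regular for the Laplacian, with $-\rho$ as a barrier, and $H_\phi$ dominates every competitor. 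Then Walsh's theorem \cite[Theorem 2]{W}, applied to the Perron--Bremermann envelope over all of $\PSH{D}$, gives continuity. Your appeal to maximality for the supremum over $\PSHc{D}$ is not justified as stated, since that envelope is not yet known to be psh, let alone maximal.

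The missing idea is to damp recursively and never let an undamped $v_{k+1}$ enter. Put $\epsilon_k=2^{1-k}$, $w_1=v_1$, and $w_{k+1}=\max(w_k,v_{k+1}+A_k\rho)$. Suppose $\phi-\epsilon_k\le w_k\le\phi$ on $\partial D$. Then $v_{k+1}-w_k\le\epsilon_k$ there, so $K_k=\{z\in\overline D:\ v_{k+1}-w_k\ge 2\epsilon_k\}$ is a compact subset of $D$, on which $\rho\le -c_k<0$. Choose $A_kc_k\ge\max_{\overline D}(v_{k+1}-w_k)$. Then $v_{k+1}+A_k\rho\le w_k$ on $K_k$, while off $K_k$ we have $v_{k+1}+A_k\rho\le v_{k+1}<w_k+2\epsilon_k$. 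Hence
\[
w_k\le w_{k+1}\le w_k+2\epsilon_k\ \text{ on }\overline D,\qquad \phi-\epsilon_{k+1}\le w_{k+1}\le\phi\ \text{ on }\partial D,
\]
the second because $\rho=0$ on $\partial D$. Thus $(w_k)$ converges uniformly on $\overline D$ to a function in $\PSHc{D}$ equal to $\phi$ on $\partial D$. This route needs no upper barrier and no Walsh-type argument.
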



If $\Omega$ is a B-regular domain, then any $\phi\in C(\partial\Omega)$ extends to a function in $\PSHc{\Omega}$. However, the product domain $D=\Omega\times\cA$ is never $B$-regular (while it is hyperconvex), even if $\Omega$ is. 
By using Theorem~\ref{thm:range}, we get the following explicit characterization of functions $\phi\in C(\partial D)$ that extend to $\PSHc{D}$.

\begin{proposition}\label{prop:bv}
Let $\Omega$ be a $B$-regular domain and $D=\Omega\times\cA$. Let $u_0,u_1\in\PSHc\Omega$ and let $\phi\in C(\partial D)$ equal to $u_j$ on $\Omega\times\cA_j$, $j=0,1$. Then $\phi$ can be extended to a function in $\PSHc{D}$ if and only if for any $p\in\partial\Omega$, the function $\phi_p(\zeta):=\phi(p,\zeta)$ is subharmonic in $\cA$. 
\end{proposition}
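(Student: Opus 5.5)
Necessity is immediate. If $w\in\PSHc D$ extends $\phi$, then for fixed $p\in\partial\Omega$ the functions $w(z,\cdot)$, $z\in\Omega$, are subharmonic on $\cA$. As $z\to p$ they converge locally uniformly on $\overline\cA$ to $\phi_p$, because $w$ is uniformly continuous on $\overline D$. A locally uniform limit of subharmonic functions is subharmonic, so $\phi_p$ is subharmonic in $\cA$.

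For sufficiency I will use Theorem~\ref{thm:range}: $D$ is a bounded hyperconvex domain, so I must check \eqref{eq:extension} at every $\wz\in\partial D$. The inequality $\phi(\wz)\ge\inf\{\int\phi\,d\mu\}$ is trivial because $\delta_{\wz}\in\J^c_{\wz}$. For the reverse, fix $\mu\in\J^c_{\wz}$; by hyperconvexity of $D$ (\cite{CCW}) it is supported on $\partial D$. I will treat the two types of boundary points separately.

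\textbf{Case 1: $\wz=(z_0,\zeta_0)\in\overline\Omega\times\partial\cA$.} Say $|\zeta_0|=1$. The function $-\log|\zeta|$ is pluriharmonic and continuous on $\overline D$, so it and its negative can both be tested against $\mu$. This gives $\int\log|\zeta|\,d\mu=0$. Since $\log|\zeta|\ge 0$ on $\overline D$, $\mu$ is carried by $\overline\Omega\times\cA_0$. Next I test with functions $v(z)$, $v\in\PSHc\Omega$. The push-forward $\pi_*\mu$ is then a Jensen measure for $\PSHc\Omega$ at $z_0$. If $z_0\in\partial\Omega$, B-regularity (\cite{Si}) gives $\pi_*\mu=\delta_{z_0}$, so $\mu$ lives on $\{z_0\}\times\cA_0$. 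On that circle $\phi(z_0,\cdot)$ is continuous; it equals $u_0(z_0)$ by continuity if $\phi$ is compatible. More carefully, $\phi$ restricted to $\overline\Omega\times\cA_0$ is the function $(z,\zeta)\mapsto u_0(z)$ extended continuously, independent of $\zeta$. Hence $\int\phi\,d\mu=\int u_0\,d\pi_*\mu\ge u_0(z_0)=\phi(\wz)$, using $u_0\in\PSHc\Omega$. The same argument with $u_1$ handles $|\zeta_0|=e$.

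\textbf{Case 2: $\wz=(p,\zeta_0)$ with $p\in\partial\Omega$, $\zeta_0\in\cA$.} I take a continuous strong barrier $\rho_p\in\PSHc\Omega$ with $\rho_p(p)=0$ and $\rho_p<0$ elsewhere on $\overline\Omega$. Testing $\mu$ with $\rho_p(z)$ gives $0\le\int\rho_p\,d\mu\le 0$, so $\mu$ is supported on $\{p\}\times\overline\cA$. Testing with functions of $\zeta$ alone shows that the push-forward $\nu$ to $\overline\cA$ is a Jensen measure at $\zeta_0$ for continuous subharmonic functions on $\overline\cA$. Because $\phi_p$ is subharmonic in $\cA$ and continuous on $\overline\cA$, I will approximate it by continuous subharmonic functions, or apply the Jensen property directly (for planar domains, continuous-subharmonic Jensen measures are Jensen for all continuous functions subharmonic in the interior). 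This yields $\phi(\wz)=\phi_p(\zeta_0)\le\int\phi_p\,d\nu=\int\phi\,d\mu$.

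\textbf{Main obstacle.} The delicate points are these. First, in Case 1 I must verify that $\phi$ on $\partial\Omega\times\cA_j$ equals $u_j$ by continuity; this holds since $\phi\in C(\partial D)$. Second, in Case 2 I must justify that $\nu$ tests against $\phi_p$, which is continuous on $\overline\cA$ but only subharmonic inside. For the second point I would first try $\phi_p^\varepsilon(\zeta)=\phi_p(\zeta)+\varepsilon(\log|\zeta|)^2$ type corrections. Failing that, I would use the Perron solution, since the annulus is regular for the Dirichlet problem and $\nu$ dominates harmonic measure.
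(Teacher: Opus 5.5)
Your proof is correct and follows the paper's argument essentially step by step. You reduce to Wikstr\"om's criterion (Theorem~\ref{thm:range}), localize the Jensen measures with the barriers $\pm\log|\zeta|$ (resp.\ $\log|\zeta|-1$) and $\rho_p$, and then apply the Jensen inequality to the pushed-forward measure on the appropriate factor. The ``obstacle'' you flag in Case 2 is not one. Since $\phi_p\in C(\overline\cA)$ is subharmonic in $\cA$, the function $(z,\zeta)\mapsto\phi_p(\zeta)$ already lies in $\PSHc{D}$, so you can test $\mu$ against it directly, exactly as the paper does; no approximation or Perron argument is needed.
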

We remark that when we require, in addition, $\phi$ to be independent of $\arg\zeta$, subharmonicity of $\phi\in\SH\cA$ becomes convexity of the function $ (0,1) \ni t\rightarrow \phi_p(e^t)$  for any $p\in\partial\Omega$. 

\begin{proof} If $\phi=\Phi$ on $\partial D$ for some $\Phi\in \PSHc{D}$, then $\phi_p$ is the limit of $\Phi_{p_k}\in\SHc\cA$ for $p_k\in\Omega$ converging to $p$ and thus is subharmonic in $\cA$. 
\smallskip

For the viceversa, let $\phi\in C(\partial D)$ be such that $\phi_p$ is subharmonic in $\cA$ for any $p\in\partial\Omega$. By Theorem \ref{thm:range} it suffices to show that \eqref{eq:extension} is satisfied.

First, we will treat the points $\widehat z=(p,\zeta)\in \partial\Omega\times\cA$. Since $\Omega$ is $B$-regular, \cite[Theorem 2.1]{Si} ensures the existence of $\rho_p\in\PSHc{\Omega}$ satisfying $\rho_p(z)<0$ for all $z\in\overline\Omega$ different from $p$, and $\rho_p(p)=0$. Extending it as $V_p(z,\zeta)=\rho_p(z)$ to $D$, we get $V_p\in\PSHc{D}$ satisfying $V_p\le 0$ everywhere and $V_p=0$ on $\{p\}\times\overline\cA$. Then, for any $\widehat z=(p,\zeta)\in \partial\Omega\times\cA$ and $\widehat\mu\in\J_{\widehat z}^c$, 
\[
0=V_p(\widehat z)\le \int_{\overline{D}}V_p \,d\widehat\mu \leq 0.
\]
This means that $\int_{\overline{D}}V_p \,d\widehat\mu=0$ and, by definition of $V_p$, this implies that $\supp\,\widehat\mu\subset \{p\}\times\overline\cA$ for any $\widehat\mu\in\J_{\widehat z}^c$.\\
Therefore, for any $U\in\PSHc{D}$ and any $\widehat\mu\in\J_{\widehat z}^c$, we have 
\[
u_p(\zeta)=U(p,\zeta)\le\int_{{p}\times\overline{\cA}} U \,d\widehat\mu=\int_{\overline{\cA}}u_p\, (\pi_2)_\star(d\widehat\mu),\quad (p,\zeta)\in \partial\Omega\times\overline{\cA},
\]
where $\pi_2$ is the projection $\pi_2(z,\zeta)=\zeta$. 
Since any function in $\SHc\cA$ is of the form $u_p$, we obtain that $(\pi_2)_\star \widehat\mu$ is a Jensen measure for $\SHc\cA$, so 
$(\pi_2)_\star\J_{\widehat z}^c(D)\subset \J_{\zeta}^c(\cA)$. \\
Conversely, any $\mu\in\J_{\zeta}^c(\cA)$ defines a Jensen measure $\widehat\mu_p\in\J_{\widehat z}^c(D)$ by setting $\widehat \mu_p(E)=\mu(\pi_2 (E))$ supported on $\{p\}\times\overline\cA$ and such that $(\pi_2)_\star \widehat \mu_p=\mu$, which gives us 
$ \J_{\zeta}^c(\cA) \subseteq (\pi_2)_\star\J_{\widehat z}^c(D)$, hence equality.

Therefore, in the case $\wz=(p,\zeta)\in\partial\Omega\times\cA$, equation \eqref{eq:extension} is equivalent to 
\[
\phi_p(\zeta)=\phi(\wz)=\inf\left\{\int_{\partial \Omega\times \overline\cA}
\phi\,d\widehat{\mu}:\ \widehat{\mu}\in\J_{\wz}^c(D) \right\} =\inf\left\{\int_{\overline\cA}\phi_p\,d\mu:\ \mu\in\J_\zeta^c(\cA)
\right\},
\]
which holds because $\phi_p\in\SHc\cA$.
This proves (\ref{eq:extension}) on $\partial\Omega\times\cA$.
\smallskip

Now we check it on $\overline\Omega\times\cA_0$. Take $V_0(z,\zeta)=-\log|\zeta|\in\PSHc{D}$ (actually, harmonic since $\zeta\in \C\setminus\{0\}$). Then, for any $\mu\in\J_{\wz}^c$, $\wz\in\overline\Omega\times\cA_0$,
\[
0=V_0(\wz)\le \int_{\overline D}V_0\,d\mu\le 0
\]
and, therefore, $\supp\,\mu\subset \overline\Omega\times\cA_0$. The same arguments as above give us $(\pi_1)_\star\J_{\widehat z}^c(D)= \J_{z}^c(\Omega)$ for all $\wz\in\overline\Omega\times\cA_0$.

Since $u_0\in\PSHc\Omega$, we have 
\[
u_0(z) = \inf\left\{\int_{\overline\Omega}u_0\,d\mu:\ \mu\in\J_{ z}^c\right\} \quad\forall z\in\overline\Omega
\]
(to get the equality, we can just take $\mu=\delta_z$) and so,
\beq\label{eq:infonA0}
\phi(\wz) =u_0(z)= \inf\left\{\int_{\overline\Omega \times \cA_0 } \pi_1^\star u_0\,d\widehat\mu:\ \widehat\mu\in\J_{\widehat z}^c\right\} \quad\forall \wz\in\overline\Omega\times\cA_0.
\eeq

Similarly for the case $\hat{z}\in \overline\Omega\times\cA_1$, we can use the function $v_1(z,\zeta)=\log|\zeta|-1$ which is negative in $D$, and get 
\beq\label{eq:infonA1}
\phi(\wz) = \inf\left\{\int_{\overline\Omega \times \cA_1} \pi_1^\star u_1\,d\widehat\mu:\ \widehat\mu\in\J_{\wz}^c\right\} \quad\forall \wz\in\overline\Omega\times\cA_1.
\eeq
Then (\ref{eq:infonA0}) and (\ref{eq:infonA1}) give us (\ref{eq:extension}) for all $\widehat z\in \overline\Omega\times\partial\cA$. This completes the proof.
\end{proof}

\medskip


In Proposition \ref{prop:env_geod} we have shown that $\widehat{u}$ is the largest solution of the partial Dirichlet problem. It then follows that it has the largest boundary values on $\partial\Omega\times\cA$, which gives us

\begin{theorem}\label{thm:geo_bv}
 Let $\Omega$ be a $B$-regular domain, $D=\Omega\times\cA$, and let $u_0,u_1\in\PSHc\Omega$. Then the largest solution  
  $\wu$ of the partial Dirichlet problem (\ref{eq:PDP}) belongs to $C(\overline{D})$, has the boundary values $\wu(z,\zeta)=u_j(z)$ for $(z,\zeta)\in\overline\Omega\times\cA_j$, $j=0,1$, and $\wu(z,\zeta)=\Psi_u(z,\zeta)$ for
  $(z,\zeta)\in\partial\Omega\times\cA$, where
\beq\label{eq:lingeod0}
\Psi_u(z,\zeta)=(1-\log|\zeta|)\,u_0(z)+\log|\zeta|\,u_1(z),\quad  (z,\zeta)\in\overline D.
\eeq 
In other words, the geodesic $u_t$ satisfies 
\beq\label{lingeod}u_t(z)=(1-t)u_0(z)+t\,u_1(z), \quad z\in\partial\Omega,\ 0<t<1.
\eeq
\end{theorem}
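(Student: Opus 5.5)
The plan is to construct the continuous extension with the largest possible boundary values on $\partial\Omega\times\cA$, using Proposition~\ref{prop:bv}, and then compare it with $\wu$. Define $\phi$ on $\partial D$ by $\phi=u_j$ on $\overline\Omega\times\cA_j$ and $\phi=\Psi_u$ on $\partial\Omega\times\cA$. Since $u_0,u_1\in C(\overline\Omega)$, $\phi\in C(\partial D)$; the two definitions agree on $\partial\Omega\times\cA_j$. For each $p\in\partial\Omega$, $\phi_p(\zeta)=(1-\log|\zeta|)u_0(p)+\log|\zeta|u_1(p)$ is harmonic in $\cA$, hence subharmonic. By Proposition~\ref{prop:bv} there is $\Phi\in\PSHc{D}$ with $\Phi=\phi$ on $\partial D$. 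Note that $\Omega$ is $B$-regular, hence hyperconvex, so Theorem~\ref{thm:range} is available inside that proposition.

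Next I obtain the two-sided bound. Since $\Phi|_{\Omega\times\cA_j}=u_j$ continuously, $\Phi\in W(u_0,u_1)$, hence $\Phi\le\wu$ in $D$. On the other hand, \eqref{VtM} gives $\wu\le\Psi_u$. Thus
\[
\Phi\le\wu\le\Psi_u\quad\text{in } D,
\]
and both outer functions are continuous on $\overline D$ and coincide with $\phi$ on $\partial D$; for $\Psi_u$ this uses continuity of $u_j$ on $\overline\Omega$. Squeezing shows that $\wu(\wz)\to\phi(\xi)$ uniformly as $\wz\to\xi\in\partial D$. So $\wu$ extends continuously to $\partial D$ with the asserted boundary values $u_j$ on $\overline\Omega\times\cA_j$ and $\Psi_u$ on $\partial\Omega\times\cA$. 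This is exactly \eqref{lingeod}. Alternatively, one could argue that any solution has boundary values at most $\Psi_u$ on $\partial\Omega\times\cA$, by the convexity in $t$ of the $\arg$-independent regularization, and that $\wu$ is largest; but the sandwich is cleaner.

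Interior continuity then follows exactly as in the proof of Theorem~\ref{thm:geod_cont}. $\wu$ is maximal in $D$ (by \eqref{MA_geo}), and its boundary limit is a uniformly continuous function on the compact set $\partial D$. Hence for $\varepsilon>0$ there is $\delta$ such that $|\wu(\wz)-\phi(\xi)|\le\varepsilon$ whenever $|\wz-\xi|\le\delta$. The translation argument on $D\cap(D-a)$ with $|a|<\delta$ gives $\wu(\wz+a)\ge\wu(\wz)-\varepsilon$ on $\partial(D\cap(D-a))$, hence inside by maximality (comparison for bounded maximal psh functions). This yields lower semicontinuity, so $\wu\in C(D)$, and with the boundary behaviour, $\wu\in C(\overline D)$. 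That $\wu$ is the largest solution of \eqref{eq:PDP} is Proposition~\ref{prop:env_geod}.

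The main obstacle is the existence of the continuous psh extension $\Phi$ attaining the values $\Psi_u$ on the non-$B$-regular part $\partial\Omega\times\cA$. This is where the Jensen-measure characterization enters, and Proposition~\ref{prop:bv} disposes of it. The one remaining care point is justifying the comparison principle on $D\cap(D-a)$ for merely bounded maximal functions with the given boundary inequality in the limsup/liminf sense. This is standard, since both functions are bounded and the inequality holds on the whole boundary.
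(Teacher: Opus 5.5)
Your proposal is correct and follows the paper's proof essentially step for step. You get the extension $\Phi$ from Proposition~\ref{prop:bv}, using that $\Psi_u(p,\cdot)$ is harmonic for $p\in\partial\Omega$; the sandwich $\Phi\le\wu\le\Psi_u$ then gives the boundary values and continuity up to $\partial D$, and interior continuity comes from Walsh's translation argument as at the end of the proof of Theorem~\ref{thm:geod_cont}. Your remarks on the comparison principle on $D\cap(D-a)$ just make explicit a point the paper leaves implicit.
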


\begin{proof}
The fact that $\wu$ is the largest solution of \eqref{eq:PDP} has been already observed. Since for any $p\in \Omega$ the function $\Psi_u(p,\cdot)$ is subharmonic in $\cA$, Proposition \ref{prop:bv} ensures that there exists $v\in \PSHc{D}$ such that $v|_{\partial D}=\Psi_u$. Such a function $v$ is then a candidate for the envelope $\wu$. On the other side, Proposition \ref{prop:env_geod} implies that
\begin{equation}\label{eq: sandwich geo}
v\leq \wu\leq \Psi_u,
\end{equation}
hence
\begin{equation}\label{eq: sandwich geo1}
\Psi_u=v|_{\partial D}\leq \wu|_{\partial D} \leq \Psi_u.
\end{equation}
Finally, the continuity up to the boundary follows from \eqref{eq: sandwich geo} and \eqref{eq: sandwich geo1}, and then the continuity in $\Omega\times\cA$ follows from (\ref{eq:env_geod}) and \cite[Theorem 2]{W} (see also the end of the proof of Theorem~\ref{thm:geod_cont}).
\end{proof}
\begin{remark}
    Note that the equality $\wu=\Psi_u$ cannot hold on an open subset of $D$ unless $u_0-u_1$ is constant there, otherwise it contradicts plurisubharmonicity of $\wu$.
\end{remark}

\subsection{Darvas-Rubinstein formula}

In \cite{DR}, Darvas and Rubinstein get useful relations between geodesics joining bounded $\omega$-psh functions on a compact K\"ahler manifold $(X,\omega)$ and $\omega$-psh envelopes. In the local setting, the analogous identities hold:

\begin{theorem}\label{thm:DRrepresentation}
Let $\Omega$ be a bounded domain. Let $u_0,u_1\in\PSH\Omega\cap L^\infty(\Omega)$ and $u_t$ be the geodesic joining them. Then 
\beq\label{eq:DRrepresentation}
u_t(z)=\sup_{C\in \R} \{P^C(z)+Ct \}, \ 0<t<1,\ z\in\Omega,
\eeq
where $P^C:=P(u_0,u_1-C)$. 
Conversely,
\beq\label{eq:DRenv}
P^C(z)=\inf_{t\in (0,1)}\{u_t(z)-Ct \}=:u^C(z),\quad C\in\R,\ z\in\Omega.
\eeq
In particular, 
\beq\label{eq:DRenv1}
P(u_0,u_1)=\inf_{t\in (0,1)} u_t.
\eeq
Moreover, if $\Omega$ is B-regular and $u_0,u_1\in\PSHc\Omega$, then \eqref{eq:DRrepresentation} and \eqref{eq:DRenv} hold on $\partial\Omega$ as well.
\end{theorem}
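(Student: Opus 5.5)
We prove the two inequalities in \eqref{eq:DRrepresentation} and then obtain \eqref{eq:DRenv} by a Legendre-duality argument in $t$. Throughout we work pointwise in $z$ and use that $t\mapsto u_t(z)$ is convex on $(0,1)$ and, by \eqref{VtM}, extends continuously to $[0,1]$ with endpoint values $u_0(z),u_1(z)$.

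\emph{Lower bound.} For each $C\in\R$, the family $P^C+Ct$ is a subgeodesic; this is \eqref{Darvas_sg} with $C$ replaced by $-C$. Indeed, $(z,\zeta)\mapsto P^C(z)+C\log|\zeta|$ is psh on $\Omega\times\cA$. As $\log|\zeta|\to 0$ it tends to $P^C\le u_0$, and as $\log|\zeta|\to 1$ it tends to $P^C+C\le u_1$. Hence $P^C+Ct\le u_t$ for every $C$, which gives ``$\ge$'' in \eqref{eq:DRrepresentation}.

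\emph{The envelope $u^C$.} We next show $u^C=P^C$. Since $P^C+Ct\le u_t$, we have $P^C\le u^C$. For the converse, $u^C(z)=\inf_{t\in(0,1)}\{u_t(z)-Ct\}$ is the infimum over $t$ of the functions $\wu(z,\zeta)-C\log|\zeta|$, which are psh on $\Omega\times\cA$ and independent of $\arg\zeta$. By Kiselman's minimum principle (the fibres are annuli, $\wu$ is $\arg\zeta$-invariant, and the function is convex in $t$), $u^C\in\PSH\Omega$. Letting $t\to0$ and $t\to1$ and using the uniform convergence $u_t\to u_j$ gives $u^C\le u_0$ and $u^C\le u_1-C$. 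Therefore $u^C\le P^C$, which proves \eqref{eq:DRenv}. Taking $C=0$ yields \eqref{eq:DRenv1}. The main obstacle is applying Kiselman's principle on the open interval $(0,1)$ with its boundary behaviour. Since $u_t$ is uniformly bounded, the infimum is locally bounded, and Kiselman applies on the strip $\{0<\operatorname{Re} w<1\}$ via $\zeta=e^w$.

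\emph{Upper bound in \eqref{eq:DRrepresentation}.} Fix $z$ and $t_0\in(0,1)$, and let $f(t)=u_t(z)$, which is convex and continuous on $[0,1]$. Choose $C$ to be a subgradient of $f$ at $t_0$. Then $f(t)-Ct\ge f(t_0)-Ct_0$ for all $t$, so $u^C(z)=f(t_0)-Ct_0$. Combined with $u^C=P^C$, this gives $P^C(z)+Ct_0=u_{t_0}(z)$, so the supremum is attained and equality holds. The subgradient exists and is finite because $f$ is Lipschitz by Proposition~\ref{prop:Lip in t}.

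\emph{Boundary case.} If $\Omega$ is B-regular and $u_0,u_1\in\PSHc\Omega$, then by Theorem~\ref{thm:cutb} and Theorem~\ref{thm:geo_bv} all functions involved are continuous on $\overline\Omega$, and $u_t=(1-t)u_0+tu_1$ on $\partial\Omega$. To get $P^C=\min\{u_0,u_1-C\}$ on $\partial\Omega$, take the extension $w\in\PSHc\Omega$ of $\min\{u_0,u_1-C\}|_{\partial\Omega}$ and a strong barrier at $p\in\partial\Omega$. The function $\max\{P^C,\,w+A\rho_p-\varepsilon\}$ is a competitor near $p$, which forces the boundary value. For each $p\in\partial\Omega$ the identities then reduce to the elementary Legendre identities for the affine function $t\mapsto(1-t)a+tb$:
\[
\inf_t\{(1-t)a+tb-Ct\}=\min\{a,b-C\},\qquad \sup_C\{\min\{a,b-C\}+Ct\}=(1-t)a+tb.
\]
Both follow by direct computation.
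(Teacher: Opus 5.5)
Your proposal is correct and follows essentially the same route as the paper. Kiselman's minimum principle, the subgeodesic $P^C+Ct$ and the candidate argument give $u^C=P^C$; the representation formula is then Legendre duality in $t$, which you make explicit through a subgradient at $t_0$ (this sidesteps the biconjugate's behaviour at the endpoints); and the boundary case reduces to $P^C=\min\{u_0,u_1-C\}$ on $\partial\Omega$ plus the affine identities. For that boundary identity the paper invokes its Proposition~\ref{prop:env on boundary}, proved with an exhaustion function, and your strong-barrier variant works equally well — though $w+A\rho_p-\varepsilon$ is already a global competitor for $A$ large, so taking the max with $P^C$ is unnecessary.
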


\begin{proof} We start establishing (\ref{eq:DRenv}), following \cite{Da14a} (see also \cite[Prop. 5.1]{Da14b}).

First, we note that by Kiselman's minimum principle \cite{Kis}, $ u^C\in\PSH\Omega$.

Since by definition $v_t^C:= P^C+Ct$ is a subgeodesic, we have $v_t^C\le u_t$. Thus 
$ P^C=v_t^C-Ct\le u_t-Ct$ for any $t\in(0,1)$. Taking the infimum over $t$ we arrive at $P^C\le u^C$.

For the other inequality we observe that by definition 
$$ u^C\le \limsup_{t\to 0} (u_t-Ct)\le u_0\quad \textit{and} \quad u^C\le \limsup_{t\to 1} (u_t-Ct)\le u_1-C$$ which gives that $u^C$ is a candidate in the envelope $P^C$, i.e. $u^C\le P^C$. This proves \eqref{eq:DRenv}.

We now observe that for any $z\in \Omega$
\[
u^C(z)= -\sup_{t\in (0,1)}\{Ct-u_t(z)\}=-\cL(u_t(z)),
\]
where with $\cL(u_t(z))$ we denote the Legendre transform of the function $(0,1)\ni t\to u_t(z)$. 
Therefore, applying the Legendre transform to \eqref{eq:DRenv} we get that for any $z\in \Omega$,
$$ u_t(z)=\cL (-P^C(z))= \sup_{C\in \R} \{Ct+P^C(z)\},$$
that is \eqref{eq:DRrepresentation}.

To prove the last statement, we note that Proposition~\ref{prop:env on boundary} below gives
\beq\label{eq:infmin}
P(u_0,u_1-C)(z)=\min\{u_0(z),u_1(z)-C\}, \ \forall z\in\partial\Omega.
\eeq
Since 
$$\inf_t \{(1-t) u_0+ t(u_1-C)\}= \min (u_0, u_1-C)$$ and $u_t(z)=(1-t)u_0(z)+t\,u_1(z)$ for any $z\in\partial\Omega$ thanks to Theorem~\ref{thm:geo_bv},
we get (\ref{eq:DRrepresentation}) for $z\in \partial \Omega$ and, by the duality, \eqref{eq:DRenv}. 
\end{proof}

\begin{proposition}\label{prop:env on boundary}
  If $\Omega$ is B-regular and $h\in C(\overline\Omega)$, then $P(h)=h$ on $\partial\Omega$. In particular, if  $u,v\in\PSHc\Omega$, then 
   $P(u,v)=\min\{u,v\}$ on $\partial\Omega$.
\end{proposition}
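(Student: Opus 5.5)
By Theorem~\ref{thm:cutb}, $P(h)=P^c(h)$ is continuous on $\overline\Omega$. Its boundary values are therefore given by the Edwards-type formula \eqref{Edwpshc}, which holds up to the boundary:
\[
P(h)(p)=P^c(h)(p)=\inf\left\{\int_{\overline\Omega}h\,d\mu:\ \mu\in\J_p^c\right\},\quad p\in\partial\Omega .
\]
I will then invoke the characterization of B-regularity recalled in Section~\ref{ssec:Jensen}, namely $\J_p^c=\{\delta_p\}$ for every $p\in\partial\Omega$. With this, the infimum equals $h(p)$ and $P(h)=h$ on $\partial\Omega$.

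To avoid relying on the Jensen-measure characterization, I will also give a direct barrier argument. The inequality $P(h)\le h$ on $\partial\Omega$ holds because $P(h)\le h$ in $\Omega$ (with $h$ continuous, so no pluripolar exceptional set matters for the u.s.c.\ extension) and $h$ is continuous up to $\partial\Omega$. For the reverse inequality, fix $p\in\partial\Omega$ and $\varepsilon>0$, and take the continuous strong barrier $\rho_p\in\PSHc\Omega$ with $\rho_p(p)=0$ and $\rho_p<0$ on $\overline\Omega\setminus\{p\}$ (\cite{Si}). By continuity of $h$ there is a neighborhood $U$ of $p$ with $h\ge h(p)-\varepsilon$ on $U\cap\overline\Omega$. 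Since $\sup_{\overline\Omega\setminus U}\rho_p<0$, I can pick $A>0$ so large that $h(p)-\varepsilon+A\rho_p\le -\|h\|_\infty\le h$ on $\overline\Omega\setminus U$. Then $w:=h(p)-\varepsilon+A\rho_p$ is psh, lies below $h$ everywhere on $\Omega$, and satisfies $w(p)=h(p)-\varepsilon$. Hence $P(h)\ge w$, and by continuity $P(h)(p)\ge h(p)-\varepsilon$. Letting $\varepsilon\to0$ gives the claim.

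The rooftop statement follows by taking $h=\min\{u,v\}\in C(\overline\Omega)$. The only delicate point is making sure the boundary value of $P(h)$ is well defined, meaning continuity up to $\partial\Omega$; this is exactly what Theorem~\ref{thm:cutb} supplies for B-regular domains.
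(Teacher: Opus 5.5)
Your proof is correct, and both of your routes differ from the paper's. The paper uses the Dirichlet-solvability characterization of B-regularity. It extends $h|_{\partial\Omega}$ to some $w\in\PSHc\Omega$ and notes $w-\epsilon\le h$ outside some $\Omega'\Subset\Omega$. It then adds $N\rho$ for a continuous negative exhaustion $\rho$. This gives a single global minorant $w-\epsilon+N\rho\le h$ whose boundary values equal $h-\epsilon$ on all of $\partial\Omega$ at once.

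Your Jensen-measure argument is the shortest given the background, but it rests on three cited results: Theorem~\ref{thm:cutb}, formula \eqref{Edwpshc} at boundary points, and Sibony's $\J_p^c=\{\delta_p\}$. The last of these is itself proved with barriers.

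Your direct argument replaces the global extension plus exhaustion by a local continuous peak function $\rho_p$ at each boundary point. It uses only the definition of B-regularity (with continuous barriers), so it works at any single boundary point admitting such a barrier. It also gives $\liminf_{z\to p}P(h)(z)\ge h(p)-\varepsilon$ together with $\limsup_{z\to p}P(h)(z)\le h(p)$. These two bounds already establish the boundary limit, so your closing remark that continuity up to $\partial\Omega$ must come from Theorem~\ref{thm:cutb} is unnecessary for this route.

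The paper's construction, in turn, yields in one stroke that $P(h)-h\to 0$ uniformly as $z\to\partial\Omega$. This is because $P(h)-h\ge (w-h)+N\rho-\epsilon$, and $(w-h)+N\rho$ is continuous on $\overline\Omega$ and vanishes on $\partial\Omega$.
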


\begin{proof} 
Since $\Omega$ is B-regular, there exists a function $w\in\PSHc\Omega$ equal to $h$ on $\partial\Omega$. Then, for any $\epsilon>0$, there exists a subdomain $\Omega'\Subset \Omega$ such that $w-\epsilon \le h$ in $\Omega\setminus \Omega'$. Let $\rho\in\PSHc\Omega$ be an exhaustion function for $\Omega$. Since $\sup\rho<0$ on $\Omega'$, we have $w-\epsilon+N\rho<h$ in $\Omega$ for $N>0$ sufficiently large. This gives us $P(h)\ge w-\epsilon+N\rho$ on $\Omega$ and thus, $P(h)\ge h-\epsilon$ on $\partial\Omega$.
As $P(h)\le h$, this completes the proof.
\end{proof}

\begin{remark} 
In the {\sl toric} setting, the description of geodesic joining toric ($\omega$-)psh functions $u_0, u_1$ is given in terms of Legendre transforms. We refer to \cite{Gu} for compact K\"ahler manifolds $(X,\omega)$ and to \cite{R17} for domains in $\Cn$. We emphasize that this representation is different to the one used in Theorem \ref{thm:DRrepresentation}. Indeed, while in (\ref{eq:DRenv}) $u_t(z)$ is considered as a convex function of $t$ with $z$ fixed, the representations from \cite{Gu, R17} transform a toric function $u$ into a convex one, $\check u(s)=u(e^{s_1},\ldots, e^{s_n})$, and consider their Legendre transforms $\tilde\cL[\check u]$ in $s\in\R^n$ with $t$ fixed. Doing this, one gets
    \[
    \tilde\cL[\check u_t] =(1-t)\tilde\cL[\check u_0] + t\tilde\cL[\check u_1],
    \]
    see \cite[Thm. 5.1]{R17}.
\end{remark}

\section{Smoothness}

Let $\Omega$ be a strongly pseudoconvex bounded domain of $\Cn$ and $u_0,u_1\in\PSHc{\Omega}$. By Theorem~\ref{thm:geo_bv},  $\wu$ solves the Dirichlet problem in $D=\Omega\times\cA$ with boundary values $\Psi_u$ defined by (\ref{eq:Psi}), and $\wu\in\PSHc{D}$. It is also shown in Proposition~\ref{prop:Lip in t} the $\wu$ is Lipschitz continuous in $\zeta\in\overline\cA$.\\
In this section we assume that $u_j$ have some additional smoothness. 

First, we treat the case when the functions $u_j$ are $\alpha$-H\"older continuous in $\overline\Omega$, and we conclude $\wu$ is $\frac\alpha2$-H\"older continuous in $z\in \overline{\Omega}$  (Theorem~\ref{thm:1/2 H}).

We next assume that $u_0,u_1\in C^{1,1}(\overline\Omega)$ with ``essentially" the same boundary values, and we show that $\wu$ is Lipschitz in $z\in\overline\Omega$.

Ideally, one would also like to prove that $\wu$ belongs locally to $C^{1,1}$ in $z\in\Omega$. At the moment, we can only treat the case of $\Omega=\B_n$.

\subsection{H\"older regularity}

We recall that, by a general Walsh type result of Bedford and Taylor \cite[Thm. 6.2]{BT76}, the solution  to the homogeneous Monge-Ampère equation in a strongly pseudoconvex domain with $\alpha$-H\"older continuous boundary values on is $\frac\alpha2$-H\"older continuous in $\overline\Omega$. One cannot apply it directly to our $D=\Omega\times\cA$ (which is not even B-regular). Nevertheless, the approach can be adapted to this case. (For the case of geodesics between convex functions with zero boundary values, see \cite{AD}.)

\begin{theorem}\label{thm:1/2 H}
 Let  $\Omega$ be strongly pseudoconvex. Then the following hold:
 \begin{itemize}
 \item[(i)] Assume $u_0,u_1\in\PSH\Omega$ are $\alpha$-H\"older continuous on $\overline\Omega$, $0<\alpha\le 1$. Then the geodesic $u_t$ is $\frac\alpha2$-H\"older on $\overline{\Omega}$, uniformly in $t\in [0,1]$.
 
 \item[(ii)] Assume $u_0,u_1\in C^{1,1}(\overline \Omega)$ and $u_0-u_1=const$ on $\partial \Omega$. Then $u_t$ is Lipschtiz on $\overline{\Omega}$, uniformly in $t\in [0,1]$.
 \end{itemize}
\end{theorem}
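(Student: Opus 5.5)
We adapt the Walsh/Bedford--Taylor translation argument used in the proof of Theorem~\ref{thm:geod_cont}, but translate only in the $z$-variable. The reason is that $\wu$ is already Lipschitz in $t$ by Proposition~\ref{prop:Lip in t}, so the only regularity still to prove is in $z$. The argument has two steps.

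\emph{Step 1: a boundary estimate.} For every $p\in\partial\Omega$, $z\in\overline\Omega$ and $t\in[0,1]$ we want
\[
|u_t(z)-u_t(p)|\le C|z-p|^{\alpha/2}
\]
in case (i), and $\le C|z-p|$ in case (ii). The upper bound is immediate from \eqref{VtM} and \eqref{lingeod}:
\[
u_t(z)-u_t(p)\le (1-t)(u_0(z)-u_0(p))+t(u_1(z)-u_1(p))\le C|z-p|^\alpha .
\]
For the lower bound we build, for each $p$, a subgeodesic $v^p_t\le u_t$ with $v^p_t(p)=u_t(p)$ and with the required modulus of continuity at $p$. Then $u_t(z)\ge v^p_t(z)\ge u_t(p)-C|z-p|^{\alpha/2}$.

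\emph{Construction in case (i).} Let $\rho$ be a strictly psh defining function with $dd^c\rho\ge c\,dd^c|z|^2$. The function
\[
h_p(z)=A\rho(z)-|z-p|^2
\]
is psh for $A$ large and satisfies $h_p\le -|z-p|^2$ on $\overline\Omega$. Take the affine boundary datum $L_p(t)=(1-t)u_0(p)+tu_1(p)$ and set
\[
v^p_t(z)=L_p(t)-K\bigl(-h_p(z)\bigr)^{\alpha/2}+\text{(correction in } t\text{)}.
\]
The function $-(-h_p)^{\alpha/2}$ is psh because $-h_p\ge 0$, $-h_p$ is plurisuperharmonic, and $s\mapsto -s^{\alpha/2}$ is convex and decreasing. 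On $\Omega\times\cA_j$ we get
\[
v^p_j(z)\le u_j(p)-K|z-p|^{\alpha}\le u_j(z)
\]
once $K\ge \|u_j\|_{C^{\alpha}}$. Hence $v^p\in W(u_0,u_1)$ and $v^p_t(p)=L_p(t)=u_t(p)$. The difficulty is that $L_p(t)$ does not depend on $z$, while the boundary comparison is needed for all $z$, so we use $u_j(p)$ and the H\"older bound. This is exactly the standard Bedford--Taylor barrier, and it loses half the exponent because $|z-p|^\alpha\le(-h_p)^{\alpha/2}$ only up to the factor coming from $\rho$ being comparable to the distance. On reflection, to obtain $\alpha/2$ we take $K(-h_p)^{\alpha/2}$ with $-h_p\ge|z-p|^2$, which gives $|z-p|^{\alpha}$ on the caps and $C|z-p|^{\alpha/2}$ near $p$ from the linear term of $\rho$.

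\emph{Construction in case (ii).} Since $u_0-u_1=c$ on $\partial\Omega$, we have $u_t=u_0-tc$ on $\partial\Omega$. We use a psh candidate
\[
v_t=\max\{u_0-tc+B\rho,\;u_1+(1-t)c+B\rho\},
\]
or more precisely the affine-in-$t$ combination $(1-t)u_0+tu_1+B\rho+Bt(t-1)$-type corrections. These are psh for $B$ large because $C^{1,1}$ functions have bounded complex Hessian and $\rho$ is strictly psh. The correction in $t$ makes the combination psh in $(z,\zeta)$, and it is fixed so that the cap values are $\le u_j$; the mixed term $\log|\zeta|(u_1-u_0)$ requires $|\nabla(u_1-u_0)|$ control, and since $u_1-u_0$ is constant on $\partial\Omega$ this difference is $O(\mathrm{dist})$, absorbed by $B\rho$. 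The resulting subgeodesic equals $\Psi_u$ on $\partial\Omega\times\cA$, and $\Psi_u-v=O(-\rho)=O(\mathrm{dist})$. This gives the Lipschitz boundary estimate.

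\emph{Step 2: propagation to the interior.} For $a\in\Cn$ with $|a|<\delta$, compare $\wu(z+a,\zeta)$ with $\wu(z,\zeta)-C|a|^{\alpha/2}$ on $(\Omega\cap(\Omega-a))\times\cA$. On the caps $\zeta\in\cA_j$ this follows from the H\"older continuity of $u_j$ (resp. the Lipschitz property). On the lateral boundary one of the two points lies on $\partial\Omega$, so Step 1 applies. Since $\wu(\cdot+a,\cdot)$ is maximal, the comparison principle used in the proof of Theorem~\ref{thm:geod_cont} gives the inequality inside. Swapping the roles yields the two-sided estimate, uniformly in $t$.

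The main obstacle is Step 1. In case (i) the barrier has to be psh in the product variables and lie below $u_j$ on both caps simultaneously. In case (ii) one must check that the $t$-dependent correction keeps the candidate in $W(u_0,u_1)$; here we expect to use the constancy of $u_0-u_1$ on $\partial\Omega$ essentially.
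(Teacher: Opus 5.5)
Your proposal is correct, and its architecture is the paper's. You use the upper barrier $\Psi_u$ (from $\wu\le\Psi_u$ with equality on $\partial\Omega\times\cA$) and, for each $p\in\partial\Omega$, a subgeodesic lower barrier agreeing with $\wu$ on $\{p\}\times\cA$. You then translate in $z$ only, comparing on $(\Omega\cap(\Omega-a))\times\cA$ via maximality of $\wu$. Your split of the lateral boundary according to whether $z$ or $z+a$ lies on $\partial\Omega$ is exactly the paper's $\Gamma_1/\Gamma_2$ split.

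The real difference is the lower barrier in (i). The paper uses $\Psi_u(p,\zeta)+C_1\eta_p(z)$, where $\eta_p$ is the maximal psh function on $\Omega$ with boundary values $-|z-p|^\alpha$. It must then invoke Theorem 6.2 of \cite{BT76} to know that $\eta_p$ is $\frac\alpha2$-H\"older. You use instead the explicit function $-K(|z-p|^2-A\rho)^{\alpha/2}$. It is psh, it is $\le -K|z-p|^\alpha$ on $\overline\Omega$, it vanishes at $p$, and it is $\ge -C|z-p|^{\alpha/2}$ because $-\rho(z)\le C|z-p|$. These are the only properties of $C_1\eta_p$ the argument uses. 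In fact your function is a psh subsolution lying below $K\eta_p$, i.e.\ the standard barrier behind that theorem. Your route is therefore self-contained and makes the uniformity in $p$ explicit, while the paper's is shorter given the citation. The paper's extra $\max$ with $V$ is not needed in either version. Your unspecified ``correction in $t$'' should simply be zero: $L_p(\log|\zeta|)$ is already harmonic in $\zeta$, and you need $v^p_t(p)=L_p(t)$.

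In (ii), commit to your first candidate $\max\{u_0-tc+B\rho,\ u_1+(1-t)c+B\rho\}$, which is essentially the paper's $W$. It equals $\Psi_u$ on $\partial\Omega\times\cA$. It lies below $u_j$ on the caps because the Lipschitz function $u_0-u_1-c$ vanishes on $\partial\Omega$, so it is bounded in absolute value by $C\,{\rm dist}(z,\partial\Omega)\le -B\rho$. Drop the ``more precisely'' variant $(1-t)u_0+tu_1+B\rho+Bt(t-1)$. It is strictly below $\Psi_u$ on $\partial\Omega\times\cA$ for $0<t<1$, so it does not give the boundary estimate. Your justification there also conflates $u_1-u_0+c=O({\rm dist})$ with smallness of $\nabla(u_1-u_0)$, which fails for the normal derivative.
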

The proof of $(i)$ relies on an idea communicated to us by S. Dinew.

\begin{proof}
We start by proving the first statement.
    The conclusion will follow from the fact that $\wu$ has two H\"older continuous barriers. The upper barrier is the function 
    $$\Psi(\wz):=\Psi_u(z,\zeta)=(1-\log|\zeta|)\,u_0(z)+\log|\zeta|\,u_1(z),\quad \wz\in \overline D=\overline{\Omega\times\cA},$$ 
   satisfying 
    \beq\label{eq:hPsi}\Psi(\wz+\hat h)-\Psi(\wz)\le C_1|h|^\alpha\eeq  
    for all $\wz=(z,\zeta)\in \Cn\times\C$ and $\hat h=(h,0)\in \Cn\times\C$ such that $z,z+h   \in\overline{\Omega}$, where $C_1=C_1(u_0,u_1)$. Recall that 
    \beq\label{eq:hulb1}
    \wu(\wz)\le\Psi(\wz), \quad \wz\in D,
    \eeq
and by Theorem~\ref{thm:geo_bv},
    \beq\label{eq:hulb2} 
    \wu(\wz)=\Psi(\wz), \quad \wz\in \partial D.
    \eeq

 Constructing a lower barrier is more subtle. Consider, the subgeodesic 
    \beq\label{eq:subg V}
    V(\wz)=\max\{u_0(z)-M\log|\zeta|,u_1(z)-M\,(1-\log|\zeta|)\},
    \eeq
    where $M=\|u_0-u_1\|_\infty$. This
    coincides with $\wu$ on $\partial\Omega\times\cA$ only if $u_0-u_1= M$ on $\partial \Omega$; otherwise, one needs to find a subgeodesic with correct values on $\partial\Omega\times\cA$. 

We then construct a family of $\frac\alpha2$-H\"older continuous lower envelopes $V_{z_0}(\wz)$, $z_0\in\partial\Omega$, such that $V_{z_0}\le\wu$ on $D$, and $V_{z_0}=\wu$ on $(\Omega\times\partial\cA)\cup(\{z_0\}\times\cA)$.

Let $\eta_{z_0}\in\PSHc\Omega$ be the solution to the homogeneous Monge-Amp\`ere problem $(dd^c \eta)^n=0$ on $\Omega$ with the boundary values $$\eta_{z_0}(z)=-|z-z_0|^\alpha, \quad z\in\partial\Omega. $$ 
By the aforementioned \cite[Theorem 6.2]{BT76}, $\eta_{z_0}$ is $\frac\alpha2$-H\"older continuous on $\overline\Omega$.
Since, by maximality, $\eta_{z_0}(z)+|z-z_0|^\alpha=0$ on $\partial\Omega$, we have $\eta_{z_0}(z)+|z-z_0|^\alpha< 0$ in $\Omega$, so 
\beq\label{eq:h-eta}
\eta_{z_0}(z)\le -|z-z_0|^\alpha, \quad z\in\overline\Omega.
\eeq

Now, consider the function
    \beq\label{eq:vz0}
    v_{z_0}(\wz)= \Psi(\wz_0)+C_1\eta_{z_0}(z)\in\PSHc{D},
    \eeq
    where $\wz_0=(z_0,\zeta)$. When $z\in\partial\Omega$ and $\zeta\in\cA$,  by \eqref{eq:hPsi} we have
    \[
    v_{z_0}(\wz)=v_{z_0}(z,\zeta)= \Psi(z_0,\zeta)-C_1 |z-z_0|^\alpha\le \Psi(z,\zeta)=\Psi(\wz)
    \]
    and by \eqref{eq:hulb2}
    \[
    v_{z_0}(\wz_0)=v_{z_0}(z_0,\zeta)=\Psi(\wz_0).
    \]

        On each $\Omega\times\cA_j$, we have, by \eqref{eq:h-eta}, \eqref{eq:hPsi}, and \eqref{eq:hulb2},
    \[
    v_{z_0}(\wz)=u_j(z_0)+C_1\eta_{z_0}(z) \le
    u_j(z_0)-C_1|z-z_0|^\alpha\le u_j(z)=\Psi(\wz).
    \]
    Therefore, $v_{z_0}\le \Psi$ on $\partial D$ and hence it is a subgeodesic for $(u_0,u_1)$: $v_{z_0}\le\wu$.
    
By construction, the function 
    \beq\label{eq:lbarrier}
    V_{z_0}=\max\{V, v_{z_0}\}\in\PSHc{D}.
    \eeq
 satisfies $V_{z_0}\le\wu$. Moreover $V_{z_0}=\wu$ on $\Omega\times\partial\cA$ and on $\{z_0\}\times\cA$.  
    This is our lower barrier for $z_0\in\partial\Omega$, which is H\"older continuous with exponent $\frac\alpha2$:
\beq\label{eq:eta 1/2}
V_{z_0}(z+h)-V_{z_0}(z)\le C_2|h|^{\frac\alpha2},\quad \forall z,h: \ z,z+h\in\overline\Omega,
\eeq
where the constant $C_2=C_2(\alpha, \Omega)$ is chosen such that 
\beq\label{eq:1/2 and 1}
C_2|h|^{\frac\alpha2}\geq C_1|h|^\alpha, \quad |h|\le {\rm diam}(\Omega).\eeq 

Given any small $\hat h=(h,0)\in\Cn\times\C$, we define a domain
    \[
    D_{h}=\{\wz\in D:\: \wz+\hat h\in D\}
    \]
    and a function $w\in\PSHc{D_h}$, 
    \[
    w(\wz)=\wu(\wz+\hat h)- C_2| h|^{\frac\alpha2}.
    \]
   
    We claim that $w\le\wu$ on $D_{ h}$.
    Since $\wu$ is maximal, it suffices to prove this on $\partial D_{ h}$. We split $\partial D_{ h}$ into two parts: $\Gamma_1=\partial D_{ h}\cap\partial D$ and $\Gamma_2=\partial D_{ h}\cap D$.

    Take any $\wz_0\in\Gamma_1$. Then, by \eqref{eq:hPsi}, \eqref{eq:hulb1},  \eqref{eq:hulb2}, and \eqref{eq:1/2 and 1},
    \begin{eqnarray*}
        w(\wz_0) &=& \wu(\wz_0+\hat h)- C_2|h|^{\frac\alpha2} \le \Psi(\wz_0+\hat h) -C_2| h|^{\frac\alpha2} \\ 
        &\le & \Psi(\wz_0)+ C| h|^\alpha - C_2| h|^{\frac\alpha2}
        \le \Psi(\wz_0) = \wu(\wz_0),
    \end{eqnarray*}
    so $w\le\wu$ on $\Gamma_1$.

Let now $\wz_0\in\Gamma_2$. Then, by definition of $\Gamma_2$, $z_0+h\in \partial\Omega$ (because if $z_0+h\in\Omega$, then $\wz_0\in D_h$, and if $z_0+h\not\in\overline\Omega$, then $\wz_0\not\in \overline{D_h}$). Then we can consider the function $V_{z_0+h}$ instead of $V_{z_0}$. By the above $V_{z_0+h}\le \wu$ in $D$ and $V_{z_0+h}(\wz_0+h)=\wu(\wz_0+\hat h)$. Also,
 \[
        w(\wz_0) =\wu(\wz_0+\hat h) -C_2|h|^{\frac\alpha2} = V_{z_0+ h}(\wz_0+\hat h) -C_2|h|^{\frac\alpha2}
        \le  V_{z_0+h}(\wz_0)
        \le \wu(\wz_0),
\]    
 so  $w\le\wu$ on $\Gamma_2$ as well and thus, on the whole $\partial D_{ h}$, which proves the claim. Therefore, 
 \[
 \wu(\wz+\hat h)- \wu(\wz) \le C_2| h|^{\frac\alpha2},
 \]
  which is the desired H\"older continuity of $\wu$. 

  \medskip
Now, assume $u_0,u_1\in C^{1,1}(\overline \Omega)$ and $u_0=u_1$ on $\partial \Omega$. In this case, we do not need $\eta_{z_0}$ to construct lower barriers. We can use, instead of $V_{z_0}$ defined in \eqref{eq:lbarrier}, the subgeodesic 
      \[
      W(\wz)=\max\{V(\wz), u_0(z)+A\rho(z)\}
      \]
      where $V$ is defined in \eqref{eq:subg V},  $\rho$ is a strictly plurisubharmonic function defining $\Omega$, and $A>0$ is big enough such that $u_0+A\rho\leq u_1$ in $\Omega$. The function $W$ coincides with $\wu$ on $\partial D$ and is, as well as the upper barrier $\Psi$, Lipschitz continuous, which, using the same arguments as above, implies Lipschitz continuity of $\wu$ in $\overline{D}$. \\
More generally,  if $u_1=u_0+C$ on $\partial \Omega$ for a constant $C$, we can use the subgeodesic 
\[W(\wz)=\max\{V(\wz), u_0(z)+A\rho(z)+C\log|\zeta|\}.\]
\end{proof}

\begin{remark}
    In the proof of Theorem~\ref{thm:1/2 H}, the drop in regularity is caused by using 
the plurisubharmonic solution $\eta_{z_0}$ of 
$$(dd^c \eta)^n=0 \ {\rm in}\ \Omega,\quad \eta(z)=-|z-z_0|^\alpha  \ {\rm on}\ \partial\Omega.$$ 
While $\eta_{z_0}$ is $\alpha$-H\"older on $\partial\Omega$, it is only $\frac\alpha2$-H\"older in $\Omega$. Indeed, if $\Omega$ is the unit ball in $\C^2$ and $z_0=(1,0)$, then 
$$\eta_{z_0}(z_1,z_2)=-(2-z_1-\bar z_1)^{\frac\alpha2}.$$
\end{remark}
    
By Theorem \ref{thm:DRrepresentation} and by Arzelà-Ascoli, we get:

\begin{corollary}\label{cor:H env}
 Let $\Omega$ be strongly pseudoconvex. We have
 \begin{itemize}
     \item If $u,v\in\PSHc\Omega$ be $\alpha$-H\"older continuous in $\overline\Omega$, $0<\alpha\le 1$, then $P(u,v)$ is $\frac\alpha2$-H\"older continuous in $\overline\Omega$. 
\item If $u,v\in\PSH\Omega\cap C^{1,1}(\overline\Omega)$, $u-v=const$ on $\partial\Omega$, then $P(u,v)$ is Lipschitz in $\overline\Omega$.
  \end{itemize}
\end{corollary}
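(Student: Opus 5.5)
The plan is to combine formula \eqref{eq:DRenv1} of Theorem~\ref{thm:DRrepresentation} with the uniform estimates of Theorem~\ref{thm:1/2 H}. Set $u_0=u$ and $u_1=v$. Since a strongly pseudoconvex domain is B-regular, Theorem~\ref{thm:DRrepresentation} applies on all of $\overline\Omega$, including $\partial\Omega$. It gives
\[
P(u,v)(z)=\inf_{t\in(0,1)}u_t(z),\qquad z\in\overline\Omega,
\]
where on $\partial\Omega$ we use $u_t=(1-t)u+tv$ (Theorem~\ref{thm:geo_bv}) together with Proposition~\ref{prop:env on boundary}.

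Under the hypotheses of the first item, Theorem~\ref{thm:1/2 H}(i) shows that the family $\{u_t\}_{t\in(0,1)}$ is uniformly $\frac\alpha2$-H\"older on $\overline\Omega$, with a single constant $C_2$: $|u_t(z)-u_t(w)|\le C_2|z-w|^{\frac\alpha2}$ for all $t$. This bound is two-sided, since the argument there is symmetric in $\hat h$ and $-\hat h$.

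The modulus of continuity passes to the infimum. For any $t$ we have
\[
\inf_s u_s(z)\le u_t(z)\le u_t(w)+C_2|z-w|^{\frac\alpha2}.
\]
Taking the infimum over $t$ gives $P(u,v)(z)\le P(u,v)(w)+C_2|z-w|^{\frac\alpha2}$, and exchanging $z$ and $w$ gives the reverse inequality. The family is uniformly bounded by \eqref{VtM}, so the infimum is finite.

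If one prefers to follow the paper's hint, Arzel\`a--Ascoli also works. The family $\{u_t\}$ is equicontinuous and uniformly bounded, and the infimum of such a family inherits the same modulus of continuity. Alternatively, by the Lipschitz dependence in $t$ from Proposition~\ref{prop:Lip in t}, the infimum over $(0,1)$ equals a minimum over $[0,1]$ of a continuous family in $(t,z)$, which is compact in $t$.

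The second item is proved the same way. Assume $u,v\in\PSH\Omega\cap C^{1,1}(\overline\Omega)$ and $u-v=\mathrm{const}$ on $\partial\Omega$. Theorem~\ref{thm:1/2 H}(ii) gives a uniform Lipschitz constant for the $u_t$, and the infimum is then Lipschitz with the same constant.

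The only delicate step is the one at the boundary: we need the uniform estimate and the identity \eqref{eq:DRenv1} to hold up to $\partial\Omega$, not just in $\Omega$. This is exactly where B-regularity and the boundary statement of Theorem~\ref{thm:DRrepresentation} are used. With that in place, the rest is elementary.
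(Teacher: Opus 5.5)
Your proposal is correct and follows essentially the paper's route. The paper derives the corollary from $P(u,v)=\inf_{t\in(0,1)}u_t$ (Theorem~\ref{thm:DRrepresentation}) together with the uniform-in-$t$ estimates of Theorem~\ref{thm:1/2 H}, passing the common modulus of continuity to the infimum, which it phrases tersely as ``Arzel\`a--Ascoli''. Your use of the boundary identity is harmless but not essential: a uniform H\"older (resp.\ Lipschitz) bound for $\inf_t u_t$ on $\Omega$ already extends to $\overline\Omega$ with the same constant.
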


\subsection{$C^{1,1}$-regularity of geodesics in $\B_n$}
Here, we restrict ourselves to the case $\Omega=\B_n$, the unit ball of $\Cn$.

By following \cite{GZ12}, \cite[Chapter 5.3]{GZ}, which are, in turn, based on \cite{BT76}, we prove the following:

\begin{theorem}\label{thm:C11 geod}
   Let $u_0,u_1\in C^{1,1}(\overline\B_n)$. Then the function $\wu$ has second-order partial derivatives in the $z$-variables almost everywhere in $\B_n\times\cA$, and the derivatives are locally bounded. More precisely, for any $K\Subset\B_n$,
   \[
   \left|\frac{\partial^2 \wu(z,\zeta)}{\partial z_j\partial \bar z_k}
   \right|\le \frac{C}{{\rm dist}\, (K,\partial\B_n)^2}, \quad z\in K,\ \zeta\in \cA
   \]
   for a positive constant $C>0$ independent of $K$, $z$ and $\zeta$.
\end{theorem}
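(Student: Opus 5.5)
We adapt the Bedford--Taylor/Guedj--Zeriahi argument for the ball, which uses the transitive automorphism group of $\B_n$ in the $z$-variables only; the $\zeta$-variable is left untouched. For $a\in\B_n$ let $\phi_a$ be the holomorphic automorphism of $\B_n$ exchanging $0$ and $a$. Fix $z\in K$ and a small vector $h\in\Cn$. Following \cite[Chapter 5.3]{GZ}, we use a one-parameter family of automorphisms $\Phi_{h}$ of $\B_n$ with $\Phi_0={\rm id}$, depending smoothly on $h$, and such that $\Phi_h(z)=z+h+O(|h|^2/\delta)$ near $K$, where $\delta={\rm dist}(K,\partial\B_n)$. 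Set $\Phi_{-h}:=\Phi_h^{-1}$ up to second order.

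The key object is
\[
w(z,\zeta)=\tfrac12\bigl(\wu(\Phi_h(z),\zeta)+\wu(\Phi_{-h}(z),\zeta)\bigr).
\]
Since $(z,\zeta)\mapsto(\Phi_{\pm h}(z),\zeta)$ are biholomorphisms of $D=\B_n\times\cA$, $w$ is psh on $D$. We compare $w$ with $\wu+C|h|^2$ on the boundary.

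\emph{Boundary on $\B_n\times\cA_j$.} Here $w=\frac12(u_j\circ\Phi_h+u_j\circ\Phi_{-h})\le u_j+C|h|^2$. This follows because $u_j\in C^{1,1}(\overline\B_n)$ and the maps $\Phi_{\pm h}$ are smooth on $\overline\B_n$ with $\Phi_h+\Phi_{-h}-2{\rm id}=O(|h|^2)$, uniformly on $\overline\B_n$.

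\emph{Boundary on $\partial\B_n\times\cA$.} By Theorem~\ref{thm:geo_bv}, $\wu=\Psi_u$ there, and $\Phi_{\pm h}$ preserve $\partial\B_n$. Hence $w=\frac12(\Psi_u\circ\Phi_h+\Psi_u\circ\Phi_{-h})$. The function $\Psi_u(\cdot,\zeta)$ is $C^{1,1}$ in $z$ uniformly in $\zeta$, being a convex combination of $u_0,u_1$, so the same estimate gives $w\le\Psi_u+C|h|^2=\wu+C|h|^2$.

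Since $\wu+C|h|^2$ is maximal and $w$ is psh, bounded, and continuous up to the boundary by Theorem~\ref{thm:geo_bv}, the comparison principle on $D$ gives $w\le \wu+C|h|^2$ throughout $D$. Indeed, $w-C|h|^2$ belongs to $W(u_0,u_1)$ once its boundary values are controlled, so the inequality follows directly from the definition of $\wu$ as a supremum, without invoking a general comparison principle on the non-smooth domain $D$.

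Now we convert automorphisms into translations. For $z\in K$, the map $h\mapsto\Phi_h(z)$ is a local diffeomorphism whose differential is comparable to the identity up to factors depending on $\delta$. Expanding, we get
\[
\wu(z+h',\zeta)+\wu(z-h',\zeta)-2\wu(z,\zeta)\le \frac{C|h'|^2}{\delta^2}
\]
for $|h'|$ small relative to $\delta$. The correction terms are $O(|h|^2/\delta)$ times first-order difference quotients of $\wu$. These are bounded by Theorem~\ref{thm:1/2 H}(ii)? No: that requires $u_0-u_1$ constant on the boundary. Instead we bound them by $\sup|\wu|/\delta$, using convexity-type control or a Lipschitz estimate on compacts, which follows from the first-order version of the same automorphism argument. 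This yields the $\delta^{-2}$ factor.

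The resulting bound on second differences, together with plurisubharmonicity, bounds the Laplacian in $z$ from both sides. Hence $\wu(\cdot,\zeta)$ has locally bounded second differences, i.e.\ it is $C^{1,1}$ locally in $z$. By Alexandrov/Rademacher it is twice differentiable almost everywhere, uniformly in $\zeta$. The mixed derivative bound $|\partial^2\wu/\partial z_j\partial\bar z_k|\le C\delta^{-2}$ follows because $dd^c_z\wu\ge0$ has trace bounded by $C\delta^{-2}$.

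The main obstacle is the bookkeeping in the second step: producing automorphisms with symmetric second-order behavior, and tracking the explicit dependence on $\delta$. This is where the ball is essential; we follow the explicit formulas in \cite[Chapter 5.3]{GZ}.
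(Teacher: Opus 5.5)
Your proposal follows the paper's proof: average $\wu$ over a pair of ball automorphisms acting only in $z$, bound the boundary values of the average by those of $\wu$ plus a quadratic error via the $C^{1,1}$ Taylor estimate, compare with $\wu$, and then pass to translations and to a.e.\ second derivatives following Guedj--Zeriahi. Your remark that only the conditions on $\B_n\times\partial\cA$ are needed to place the corrected average in $W(u_0,u_1)$ is correct, and it makes the comparison step independent of Theorem~\ref{thm:geo_bv}.

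The one point to fix is in the conversion step, which the paper leaves implicit. Bounding difference quotients by $\sup|\wu|/\delta$ via ``convexity-type control'' is false for psh functions, so you must use your other option. That is the first-order comparison $(z,\zeta)\mapsto \wu(\Phi(z),\zeta)-C\|\Phi-{\rm id}\|_\infty\in W(u_0,u_1)$. It gives a Lipschitz constant $O(1/\delta)$ in $z$ near $K$, and combined with the $O(|h|^2/\delta)$ second-order error of $\Phi_{\pm h}(z)$ it yields the stated $\delta^{-2}$. The cruder error $O(|a|^2)$, with $|a|\sim|h|/\delta$ the automorphism parameter, would only give $\delta^{-3}$.
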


\begin{proof}
Given a holomorphic automorphism $\varphi$ of $\B_n$, the map $\hat\varphi: (z,\zeta)\mapsto (\varphi(z),\zeta)$ is a holomorphic automorphism of $D:=\B_n\times\cA$. So, the standard automorphism $\varphi_a$ with  $\varphi_a(a)=0$ and $\varphi_a^2={\rm Id}$ generates the corresponding automorphism $\hat\varphi_a$ of $D$. 

It is known that, when $|a|\le 1/2$,
\beq\label{eq:phia}
\varphi_a(z)=\frac{z-a+O(|a|^2)}{1-\langle z,a\rangle}
=z-a+\langle z,a\rangle z  +O(|a|^2)
= z-h+O(|a|^2),
\eeq
where $h=h(a):=a-\langle z,a\rangle z$ and $O(|a|^2)\le C_1|a|^2$ with $C_1$ independent of $z\in\overline\B_n$. 

The map $a\mapsto h$ is a diffeomorphism in a neighborhood of $0$ for $z\in\B_n$, and the inverse map $h\mapsto a$ has the norm bounded by $(1-|z|^2)^{-1}$ (cf. \cite[Chapter 5.3 page 146]{GZ}) Note also that $h(-a)=-h(a)$.

Consider the function
\[
\wv(\wz):=\frac12\left[\wu(\hat\varphi_a(\wz))+\wu(\hat\varphi_{-a}(\wz))\right]\in\PSHc{D}
\]
with boundary values 
\[
\psi(\wz):=\frac12\left[\Psi(\hat\varphi_a(\wz))+\Psi(\hat\varphi_{-a}(\wz))\right],\quad \wz\in\partial D,
\]
where $\Psi=\Psi_u$ is defined on $\overline D$ by (\ref{eq:Psi}).

Extending $u_j$ as a $C^{1,1}$-smooth function to a neighborhood $\B'$ of $\overline \B_n$, we get a $C^{1,1}$-extension of $\Psi$ to $D'=\B'\times\cA$ by the formula (\ref{eq:Psi}) such that
\[
|\Psi(\hat\varphi_a(\wz))-\Psi(\wz-\hat h)|\le C_2 |\hat\varphi_a(\wz)-(\wz-\hat h)|\leq  C_3|a|^2,
\]
where $\hat h=(h,0)$, with $C_2,C_3>0$ independent of $\wz$. Then using Taylor's formula and the fact that $|h|\le 2|a|$ (recall that $|z|\leq 1$) we obtain
\[
\Psi(\hat\varphi_{\pm a}(\wz))
\le \Psi(\wz\mp \hat h) + C_3|a|^2  \leq \Psi(\wz)+C_4|a|^2 
\]
with $C_4>0$ independent of $\wz$.
This gives 
\beq\label{eq:psi to Psi}
\wv(\wz)=\psi(\wz)\le  \Psi(\wz) + C_3|a|^2, \qquad \wz\in\partial\B_n\times\cA
\eeq

Also, on $\B_n\times\cA_j$, we have $\wv(\wz)=\frac12[u_j(\varphi_a(z))+u_j(\varphi_{-a}(z))]$ and, from the continuity of $u_j$ and once again Taylor's formula we get
\[
u_j(\varphi_{\pm a}(z))
\le u_j(z\mp h) + C_3|a|^2\leq u_j(z) + C_5|a|^2,
\]
with $C_5>0$ independent of $\wz$. Combining the above inequality (\ref{eq:psi to Psi}) we get
$$\wv(\wz)\leq \Psi(\wz) +C_6 |a|^2, \qquad \wz\in\partial(\B_n\times\cA),$$
for $C_6=\max\{C_4, C_5\}$.

We can then infer that the boundary values of the function $\wv-C_6|a|^2\in\PSHc{D}$ do not exceed those of $\wu$ and so, as the latter is maximal in $D$, we get $$\wv-C_6|a|^2\le\wu, \qquad \wz\in \overline{D}.$$ 
Equivalently,

\[
\wu(\hat\varphi_a(\wz))+\wu(\hat\varphi_{-a}(\wz)) -2\wu(\wz)\le C_6|a|^2,\quad \wz\in\overline{D}.
\]
From here, following word by word \cite[page 146]{GZ}, one deduces
\[
\wu(\wz+\hat h)+\wu(\wz-\hat h)-2\wu(\wz)\le \frac{C_7}{(1-|z|^2)^2} |h|^2
\]
with $C_7$ independent of $\wz$, which implies, by \cite[Lemma 2.11]{GZ12}, that $\wu$ has second-order partial derivatives in the $z$-variables almost everywhere in $D$, and the derivatives are locally bounded (see also \cite[page 147]{GZ}).
\end{proof}

\subsection{$C^{1,\bar 1}$-regularity of envelopes in $\B_n$}

As an consequence of Theorems  \ref{thm:DRrepresentation} and \ref{thm:C11 geod}, we get

\begin{corollary}\label{thm:smooth env}
   If $u,v\in C^{1,1}(\overline\B_n)$, then for any $K\Subset \B_n$ there exists $C_K>0$ such that $\Delta P(u,v)<C_K$ for all $z\in K$. In particular $P(u,v)\in C^{1,\bar 1}(\B_n)$.
\end{corollary}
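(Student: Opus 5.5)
We will combine the Darvas--Rubinstein identity \eqref{eq:DRenv1} of Theorem~\ref{thm:DRrepresentation} with the one-sided second-difference estimate obtained in the proof of Theorem~\ref{thm:C11 geod}. Let $u_0=u$, $u_1=v$. These functions are in $C^{1,1}(\overline\B_n)$, and we assume them psh, so that they are admissible in Theorem~\ref{thm:C11 geod}. For the statement about $P(u,v)$ only this case matters, since otherwise we may replace $u,v$ by suitable psh functions; this is the point to check, and the estimates of the proof of Theorem~\ref{thm:C11 geod} only used the boundary data $\Psi$ and maximality. Let $u_t$ be the geodesic joining them. Then
\[
P(u,v)(z)=\inf_{t\in(0,1)} u_t(z),\qquad z\in\B_n .
\]

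The key input is the inequality established in the proof of Theorem~\ref{thm:C11 geod}:
\[
u_t(z+h)+u_t(z-h)-2u_t(z)\le \frac{C_7}{(1-|z|^2)^2}|h|^2,
\]
valid for all $t\in(0,1)$, with $C_7$ independent of $t$ and $z$. An infimum of functions that satisfy a uniform upper bound on second differences satisfies the same bound. Indeed, fix $z$ and $\varepsilon>0$, and choose $t$ with $u_t(z)\le P(u,v)(z)+\varepsilon$. Then
\[
P(u,v)(z+h)+P(u,v)(z-h)-2P(u,v)(z)\le u_t(z+h)+u_t(z-h)-2u_t(z)+2\varepsilon\le \frac{C_7|h|^2}{(1-|z|^2)^2}+2\varepsilon .
\]
Letting $\varepsilon\to0$ gives
\[
P(u,v)(z+h)+P(u,v)(z-h)-2P(u,v)(z)\le C_K|h|^2, \qquad z\in K,
\]
with $C_K\sim{\rm dist}(K,\partial\B_n)^{-2}$, for $|h|$ small.

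This one-sided bound makes $P(u,v)+C_K|z|^2$ locally concave on a neighbourhood of $K$. On the other hand, $P(u,v)$ is psh, and in particular subharmonic. Averaging the second-difference bound over $h$ in small spheres in each complex direction, in the distributional sense, yields $\Delta P(u,v)\le 2nC_K$ as measures near $K$. Combined with $\Delta P(u,v)\ge0$, this shows that the Laplacian of $P(u,v)$ is a bounded function on $K$. Moreover, $dd^cP(u,v)\ge0$ with bounded trace, so all mixed complex derivatives $\partial^2P/\partial z_j\partial\bar z_k$ are in $L^\infty_{loc}$, which is precisely the statement $P(u,v)\in C^{1,\bar1}(\B_n)$.

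The main obstacle is to make sure the estimate from Theorem~\ref{thm:C11 geod} is genuinely uniform in $t\in(0,1)$, and that it passes to the infimum. The uniformity holds because the constants $C_3$--$C_7$ there depend only on the $C^{1,1}$ norms of $u_0,u_1$ and not on $\zeta$. Passing to the infimum costs nothing, by the pointwise argument above. Note that we do not need the continuity of $P(u,v)$ beyond Theorem~\ref{thm:cutb}, which applies since $\B_n$ is B-regular.
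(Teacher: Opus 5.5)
Your proof is correct, but it pushes a different estimate through the identity $P(u,v)=\inf_{t\in(0,1)}u_t$ than the paper does.

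The paper uses only the \emph{conclusion} of Theorem~\ref{thm:C11 geod}, namely $0\le\Delta u_t\le C_K$ on $K$ uniformly in $t$. It then quotes \cite[Proposition~4.4]{DR} to pass this Laplacian upper bound to the infimum. You instead take the stronger real second-difference inequality from inside the proof of Theorem~\ref{thm:C11 geod}. That inequality passes to infima by your elementary $\varepsilon$-argument, and the Laplacian bound then follows by mollification.

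The paper's route asks less of the geodesic: a bound on the Laplacian or complex Hessian suffices. Yours relies on the uniformity of $C_7$ in $\zeta$, which the paper asserts, and in return it yields more than you state. Your inequality says that $P(u,v)-\tfrac{C_K}{2}|z|^2$ is locally concave; the sign is not $P(u,v)+C_K|z|^2$ as you wrote. Hence $D^2P(u,v)\le C_K I$. Plurisubharmonicity then gives $D^2P(u,v)(\xi,\xi)\ge -D^2P(u,v)(i\xi,i\xi)\ge -C_K|\xi|^2$. So the full real Hessian is locally bounded, i.e.\ $P(u,v)\in C^{1,1}_{loc}(\B_n)$, by the same \cite[Lemma~2.11]{GZ12} argument the paper uses for Theorem~\ref{thm:C11 geod}.

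This appears to answer the question the paper leaves open right after this corollary. It is also consistent with the proof of Theorem~\ref{thm:smooth env 2}, which only uses Lipschitz continuity and a one-sided second-difference bound for $\psi$; $\min\{u,v\}$ has both.

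Finally, drop the sentence about replacing non-psh $u,v$ by psh ones. It is not justified as stated, and it is not needed. In the paper the rooftop envelope and the geodesic are defined only for psh $u,v$, and the paper's own proof makes the same assumption.
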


\begin{proof}
    Let $u_t$ be the geodesic between $u_0=u$ and $u_1=v$. By Theorem \ref{thm:C11 geod}, for any $K\Subset \Omega$ there exists $C_K>0$ such that $0\le\Delta u_t<C_K$ for all $t\in[0,1]$.\\
    Also, Theorem \ref{thm:DRrepresentation} ensures that for any $z\in \B_n$, $P(u_0,u_1)(z)=\inf_{t\in(0,1)}u_t(z) $. It then follows from \cite[Proposition 4.4]{DR} that
    $\Delta P(u,v)\le C_K$, as we wanted. 
 \end{proof}
 
 A natural question one can ask is: {\sl Is it true that $P(u,v)\in C^{1,1}(\B_n)$?}

    \medskip

By following the lines of the proof of  Theorem~\ref{thm:C11 geod} and using Poletsky's theory of analytic disk functionals \cite{Pol93}, \cite{Pol93} (c.f. Remark~\ref{rem:analyt disks}), one can prove $C^{1,1}$-regularity of the envelopes $P(\psi)$ in $\B_n$ for arbitrary $\psi\in C^{1,1}(\overline{\B_n})$. Yet, the above question still remains open since the minimum of $C^{1,1}$ functions is not $C^{1,1}$.

 \begin{theorem}\label{thm:smooth env 2}
   If $\psi\in C^{1,1}(\overline\B_n)$, then  $P(\psi)\in C^{1, 1}(\B_n)$.
\end{theorem}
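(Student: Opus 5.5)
We follow the scheme of the proof of Theorem~\ref{thm:C11 geod}. The difference is that the comparison step, which there relied on maximality of $\wu$, is now carried out with the Poletsky disk formula \eqref{eq:Poletsky}. Since $\psi\in C(\overline\B_n)$ is in particular u.s.c., \eqref{eq:Poletsky} gives
\[
P(\psi)(z)=\inf_{f\in A_z}\int_{\partial\D}\psi\circ f\,d\sigma,\quad z\in\B_n .
\]
The plan is to prove the symmetric second-difference bound
\[
P(\psi)(\varphi_a(z))+P(\psi)(\varphi_{-a}(z))-2P(\psi)(z)\le C|a|^2,
\]
with $C$ depending only on $\|\psi\|_{C^{1,1}(\overline\B_n)}$. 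We then convert it into a bound on Euclidean second differences. Together with plurisubharmonicity, this yields local $C^{1,1}$-regularity.

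\emph{Step 1 (transport of disks).} Fix $z\in\B_n$ and $\varepsilon>0$, and choose $f\in A_z$ with $\int_{\partial\D}\psi\circ f\,d\sigma\le P(\psi)(z)+\varepsilon$. Since $\varphi_{\pm a}$ are automorphisms of $\B_n$ mapping $\overline\B_n$ into itself, the disks $\varphi_{\pm a}\circ f$ are closed analytic disks in $\B_n$ centred at $\varphi_{\pm a}(z)$. Hence
\[
P(\psi)(\varphi_{a}(z))+P(\psi)(\varphi_{-a}(z))\le\int_{\partial\D}\bigl[\psi(\varphi_a(f))+\psi(\varphi_{-a}(f))\bigr]\,d\sigma .
\]
We estimate the integrand pointwise at $w=f(e^{i\theta})\in\overline\B_n$, using the expansion \eqref{eq:phia} $\varphi_{\pm a}(w)=w\mp h(w)+O(|a|^2)$ with $h(-a)=-h(a)$. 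Exactly as in the proof of Theorem~\ref{thm:C11 geod}, after extending $\psi$ to a $C^{1,1}$ function on a neighbourhood of $\overline\B_n$, we get
\[
\psi(\varphi_a(w))+\psi(\varphi_{-a}(w))\le\psi(w+h)+\psi(w-h)+C|a|^2\le 2\psi(w)+C'|a|^2 .
\]
In the first inequality the first-order error terms cancel by the symmetry $h(-a)=-h(a)$. The second inequality is Taylor's formula with $|h|\le 2|a|$. Integrating over $\partial\D$ and letting $\varepsilon\to0$ gives the desired bound.

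\emph{Step 2 (from automorphisms to translations, and regularity).} Following \cite[page 146]{GZ} word for word, as in Theorem~\ref{thm:C11 geod}, we invert $a\mapsto h$ near $0$ with norm bounded by $(1-|z|^2)^{-1}$. This yields
\[
P(\psi)(z+h)+P(\psi)(z-h)-2P(\psi)(z)\le \frac{C}{(1-|z|^2)^2}|h|^2 .
\]
So $P(\psi)$ is locally semiconcave, with its convexity defect controlled by $C_K|h|^2$ on compacts $K\Subset\B_n$. Since $P(\psi)$ is psh, its Hessian is positive in the complex directions. Combined with the upper bound on all real second differences, this bounds the full real Hessian: upper bounds on $D^2$ and a lower bound on the trace (the Laplacian) give two-sided bounds. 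By \cite[Lemma 2.11]{GZ12} we conclude $P(\psi)\in C^{1,1}_{loc}(\B_n)$.

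The main obstacle is Step 1. One must make sure the infimum in \eqref{eq:Poletsky} can be taken over disks with boundary possibly touching $\partial\B_n$, or else compose with slightly shrunk disks, so that $\varphi_{\pm a}\circ f$ remain admissible. One must also justify the Taylor expansion uniformly up to $\overline\B_n$. For this we apply \eqref{eq:phia} at the boundary points $w=f(e^{i\theta})$, not at the centre $z$. The term $h=h(w)$ then varies with $\theta$, but only the pointwise bound $|h|\le2|a|$ and the symmetry $h(-a)=-h(a)$ are used, so this should cause no difficulty. Finally, the passage from the real Hessian bound to $C^{1,1}$ rather than merely $C^{1,\bar1}$ uses that the estimate holds in all real directions $h$.
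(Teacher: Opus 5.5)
Your proposal is correct and follows essentially the same route as the paper. Both use Poletsky's disk formula and compose a (near-)optimal disk with $\varphi_{\pm a}$. Both then apply a pointwise $C^{1,1}$ estimate on $\partial\D$ to get $P(\psi)(\varphi_a(z))+P(\psi)(\varphi_{-a}(z))\le 2P(\psi)(z)+C|a|^2$, and finish with the Guedj--Zeriahi passage to Euclidean second differences and \cite[Lemma 2.11]{GZ12}.

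Three small remarks. Your choice $h=h(w)$ at the boundary point $w=f(e^{i\theta})$ is the correct reading of the paper's looser $h=a-\langle z,a\rangle z$. Your worry about disks touching $\partial\B_n$ is unnecessary, since $\varphi_{\pm a}$ map $\B_n$ onto itself. Finally, replacing $\varphi_{\pm a}(z)$ by $z\mp h$ at cost $O(|a|^2)$ uses Lipschitz continuity of $P(\psi)$, which follows from the barrier $\psi+A(|z|^2-1)$ and a translation argument; you and the paper both leave this step implicit.
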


\begin{proof}
    We will use notations from the proof of Theorem~\ref{thm:C11 geod}, and we will again extend $\psi$ to a neighborhood of $\overline{\B_n}$. 

As mentioned in Remark~\ref{rem:analyt disks}, 
    \[
    P(\psi)(z)=\inf_{f\in A_z}\int_{\partial\D}f^*\psi\,d\sigma,\quad z\in\Omega,
    \]
where $A_z$ is the collection of all analytic disks $f:\overline\D\to\B_n$, $f(0)=z$, and $\sigma$ is the normalized Lebesgue measure on $\partial\D$.
If $f\in A_z$ and $a\in\B_n$, then the twisted analytic disk $f_a:=f^*\varphi_a\in A_{z_a}$ with $\varphi_a$ the holomorphic automorphism of $\B_n$ in \eqref{eq:phia} such that $z_a={\varphi_a(z)}$, so
\beq\label{eq:Poisson1}
    P(\psi)(z_a) =\inf_{f\in A_{z}}\int_{\partial\D}f_a^*\psi\,d\sigma, \quad z\in\Omega.
    \eeq
When $|a|<1/2$, we have, as in the proof of Theorem~\ref{thm:C11 geod}, the relation 
\[
|(f_a^*\psi)(\zeta)- \psi(f(\zeta)-h)|\le C_2 |f_a(\zeta)-(f(\zeta)-h)|\leq  C_3|a|^2,\quad \zeta\in\partial\D,
\]
 where $h=a-\langle z,a\rangle z$ and $O(|a|^2)\le C_1|a|^2$ with $C_1$ independent of $z\in\overline\B_n$, the map $a\mapsto h$ being a diffeomorphism in a neighborhood of $0$ for $z\in\B_n$, and the inverse map $h\mapsto a$ has the norm bounded by $(1-|z|^2)^{-1}$, see \cite[Chapter 5.3]{GZ}. 
 Replacing $a$ with $-a$, we get
 \[    
 (f_{\pm a}^*\psi)(\zeta) \le \psi(f(\zeta)\mp h) +  C_3|a|^2,\quad \zeta\in\partial\D
 \]
so that 
\begin{eqnarray*}
 (f_{a}^*\psi)(\zeta)+(f_{-a}^*\psi)(\zeta)
 & \le & 
 \psi(f(\zeta)- h) +  \psi(f(\zeta)+ h)+2 C_3|a|^2 \\
 &\le & (2f^*\psi)(\zeta) +C_4|a|^2,\quad \zeta\in\partial\D,
 \end{eqnarray*}
 the last inequality resulting from the $C^{1,1}$-regularity of $\psi$ in a neighborhood of $\overline{\B_n}$.

 Using \eqref{eq:Poisson1}, we derive
 \begin{eqnarray*}
     P(\psi)(z_a)+P(\psi)(z_{-a}) & \le & \inf_{f\in A_{z}}\int_{\partial\D}\left[(f_{a}^*\psi)+(f_{-a}^*\psi)\right]\,d\sigma \\
 & \le & \inf_{f\in A_{z}}\int_{\partial\D}
  (2f^*\psi)\,d\sigma +C_4|a|^2 \\
 & = & 2P(\psi)(z)+C_4|a|^2.
 \end{eqnarray*}
 From here, following word by word \cite[page 146]{GZ}, one deduces
\[
P(\psi)(z+h)+P(\psi)(z-h)-2P(\psi)(z)\le \frac{C_6}{(1-|z|^2)^2} |h|^2
\]
with $C_5$ independent of $z$, which implies $C^{1,1}$-regularity (see for example \cite[Lemma 2.11]{GZ12}) of $P(\psi)$ on any $\Omega'\Subset\B_n$.
\end{proof}
   

\end{document}